\documentclass[pdflatex,12pt]{article}
\usepackage[utf8]{inputenc}

\usepackage{amsmath,color}
\usepackage{amssymb,amsthm}

\usepackage[permil]{overpic}
\usepackage[multiple]{footmisc}
\usepackage{xr}
\usepackage[left=1.6cm,right=1.6cm,top=2.50cm,bottom=2.50cm]{geometry}
\usepackage{graphicx}
\usepackage[font=small,labelfont=bf,
   justification=justified,
   format=plain,labelsep=space]{caption}
\usepackage{subfig}
\usepackage{float}
\usepackage[english]{babel}
\usepackage[font=small,labelfont=bf,justification=centering]{caption}
\usepackage[T1]{fontenc}
\usepackage{lastpage}
\usepackage{enumerate}
\usepackage{enumitem}
\usepackage{lmodern}
\usepackage{array}
\usepackage{bm}
\usepackage{multirow}
\usepackage{dsfont}
\usepackage{tensor}
\usepackage{fancyhdr}
\usepackage{listings}
\usepackage{dsfont}
\usepackage{siunitx}
\usepackage{titling}
\usepackage{lipsum}
\usepackage{tabularx}
\usepackage{verbatim}
\usepackage{authblk}
\usepackage{csquotes}
\usepackage{bbm} 
\usepackage{hyperref} 
\usepackage{ctable} 
\usepackage{multirow} 
\usepackage{titlesec} 
\usepackage{marvosym}
\usepackage{relsize,exscale} 
\usepackage[
backend=biber,
style=numeric,
giveninits=true,
sorting=anyt
]{biblatex}

\allowdisplaybreaks

\addbibresource{Ref.bib}

\graphicspath{{Fig/}}

\addto\captionsenglish{}

\titleformat{\subsection}{\normalfont\large\raggedright\it}{\thesubsection}{1em}{}

\makeatletter
\newcommand{\thickhline}{%
    \noalign {\ifnum 0=`}\fi \hrule height 1pt
    \futurelet \reserved@a \@xhline
}
\newcolumntype{"}{@{\hskip\tabcolsep\vrule width 1pt\hskip\tabcolsep}}
\makeatother

\usepackage{adjustbox}


\makeatletter
\newsavebox{\@brx}
\newcommand{\llangle}[1][]{\savebox{\@brx}{\(\m@th{#1\langle}\)}%
  \mathopen{\copy\@brx\kern-0.5\wd\@brx\usebox{\@brx}}}
\newcommand{\rrangle}[1][]{\savebox{\@brx}{\(\m@th{#1\rangle}\)}%
  \mathclose{\copy\@brx\kern-0.5\wd\@brx\usebox{\@brx}}}
\makeatother

\newcommand{\RomanNumeralCaps}[1]
    {\MakeUppercase{\romannumeral #1}}

\relpenalty   = 10000
\binoppenalty = 10000

\captionsetup{justification=justified,font=scriptsize,singlelinecheck=false}
\captionsetup[subfloat]{captionskip=13pt,justification=centering}

\sisetup{output-exponent-marker=\textsc{e}, bracket-negative-numbers, open-bracket={\text{-}}, close-bracket={}}
\lstset{
  basicstyle=\ttfamily,
  columns=fullflexible,
  frame=single,
  breaklines=true,
  postbreak=\mbox{\textcolor{red}{$\hookrightarrow$}\space},
}

\setlength\parindent{0pt}

\newtheorem{deff}{Definition}[section]
\newtheorem{prop}[deff]{Proposition}
\newtheorem{thm}[deff]{Theorem}
\newtheorem{lm}[deff]{Lemma}

\theoremstyle{definition}
\newtheorem{example}[deff]{Example}
\newtheorem{remark}[deff]{Remark}

\title{Early-Warning Signs for SPDEs with Continuous Spectrum}

\author[*,$\ddag$]{P. Bernuzzi}
\author[*]{A. D\"ux}
\author[*,$\dag$]{C. Kuehn}
\affil[*]{\footnotesize{Technical University of Munich, School of Computation Information and Technology, Department of
Mathematics, Boltzmannstraße 3, 85748 Garching, Germany}}
\affil[$\dag$]{\footnotesize{Technical University of Munich, Munich Data Science Institute, Walther-von-Dyck-Straße 10, 85748 Garching, Germany}}
\affil[$\ddag$]{Author to whom any correspondence should be addressed. Email: paolo.bernuzzi@ma.tum.de}

\date{\today}

\begin{document}

\maketitle{}

\begin{abstract}
In this work, we study early-warning signs for stochastic partial differential equations (SPDEs), where the linearization around a steady state has continuous spectrum. The studied warning sign takes the form of qualitative changes in the variance as a deterministic bifurcation threshold is approached via parameter variation. Specifically, we focus on the scaling law of the variance near the transition. Since we are dealing here, in contrast to previous studies, with the case of continuous spectrum and quantitative scaling laws, it is natural to start with linearizations that are multiplication operators defined by analytic functions. For a one-dimensional spatial domain we obtain precise rates of divergence.  In the case of the two- and three-dimensional domains an upper bound to the rate of the early-warning sign is proven. These results are cross-validated by numerical simulations. Our theory can be generically useful for several applications, where stochastic and spatial aspects are important in combination with continuous spectrum bifurcations.
\end{abstract}

\small{\textbf{Keywords:} Early-warning signs, SPDEs, Scaling law, Continuous spectrum, Multiplication operator.}

\small{\textbf{Acknowledgments:} This project has received funding from the European Union’s Horizon 2020 research and innovation programme under Grant Agreement 956170.}

\pagestyle{fancy}
\fancyhead{}
\renewcommand{\headrulewidth}{0pt}
\fancyhead[C]{\textit{Early-Warning Signs for SPDEs with Continuous Spectrum}}

\section{Introduction}

One natural way, how critical transitions appear in complex systems is in the context of  differential equations with multiple time scales~\cite{kuehn2015multiple}. After a longer period of slow change, a critical transition corresponds to a large/drastic event happening on a fast scale. In more detail, we can characterize this mechanism \cite{KU4,KU} by several essential components. The typical dynamics of the system for most times constitutes a slow motion. Yet, there is generically a slow drift towards a bifurcation point of the fast/layer subsystem. The critical transition is an abrupt change that happens on a fast timescale after the bifurcation point has been passed. Of course, it is of interest to study, whether there are early-warning signs to potentially anticipate a critical transition. One crucial mechanism to extract warning signs from data is to exploit the effect of critical slowing down, where the recovery of perturbations to the current state starts to slow as a bifurcation is approached. This effect can then be extracted from data by exploiting natural stochastic fluctuations as perturbation effects and by measuring a stochastic observable such as variance or autocorrelation~\cite{Wiesenfeld1}. This approach has recently gained considerable popularity in various applications \cite{SCHE}.
\\

It is evident that in many complex systems with critical transitions, we should also take into account, beyond stochasticity, a spatial component. A prime example are applications in neuroscience, where it is evident that larger-scale events, such as epileptic seizures, are spatial stochastic systems with critical transitions~\cite{MCS,ME,MO}. Another example are electric power systems where critical transitions can lead to a failure of the whole system such as voltage collapses~\cite{COT}. Other examples of complex systems with critical transitions occur in medical applications, including asthma attacks~\cite{VE} and epidemic outbreaks~\cite{ORE}, economic applications, like financial crisis~\cite{MA}, and environmental applications, such as climate changes~\cite{AL,LE} and population extinctions~\cite{SCHE1}. It becomes apparent that for each of these applications, a prediction and warning of a critical transition is desirable as it could enable the prevention of its occurrence or at least improve the adaptability to it. Yet, to make early-warning sign theory reliable and avoid potential non-robust conclusions~\cite{Dakosetal,DitlevsenJohnsen}, a rigorous mathematical study of early-warning sign theory is needed. There exists already a quite well-developed theory for warning signs of systems modeled  by stochastic ordinary differential equations (SODE), for example in \cite{CR,KU6}. However, there are still many open problems for systems modelled by stochastic partial differential equations (SPDE). For SPDEs is potentially even more crucial in comparison to SODEs to develop a mathematical theory as SPDEs model complex systems, where it is either extremely costly to obtain high-resolution experimental, or even simulation, data in many applications. Therefore, experimentally-driven approaches have to be guided by rigorous mathematical analysis.\\

For our analysis we follow the construction in \cite{KU}, which considers differential equations of the form
\begin{align} \label{eq:fastslow}
\begin{cases}
    \text{d}u(x,t)=F_1(x)u(x,t)+F_2(u(x,t),p(x,t))\, \text{d}t + \sigma \, \text{d}W(t) \, , \\ 
    \text{d}p(x,t)= \epsilon G(u(x,t),p(x,t))\, \text{d}t \, ,
\end{cases}
\end{align}
with $(x,t) \in \mathcal{X} \times [0, \infty)$ for a connected set $\mathcal{X}\subseteq\mathbb{R}^N$ with nonempty interior, $u:\mathcal{X} \times [0, \infty)\to \mathbb{R}$, $p:\mathcal{X} \times [0, \infty)\to \mathbb{R}$ and $N\in\mathbb{N}:=\{1,2,3,\ldots\}$. Suppose that $F_1$ is a linear operator and that $F_2$ and $G$ are sufficiently smooth nonlinear maps. We assume $\sigma>0$, $0<\epsilon\ll 1$ and with the notation $W$ we refer to a $Q$-Wiener process. The precise properties of the operator $Q$ are described further below. We denote $u(x,t)$ by $u$ and $p(x,t)$ as $p$.  \\

Suppose that $F_2(0,p) = 0$. Hence, for any $p$ we obtain the homogeneous steady state $u_{\ast}\equiv 0$ for the deterministic partial differential equation (PDE) corresponding to \eqref{eq:fastslow} with $\sigma=0$. To determine the local stability of $u_{\ast} \equiv 0$, we study the linear operator
\begin{align*}
    A(p) := F_1 + \text{D}_{u_{\ast}} F_2(0,p) \, ,
\end{align*}
where $\text{D}_u$ is the Fréchet derivative of $u$ on a suitable Banach space, which contains the solutions of \eqref{eq:fastslow} and which is described in full detail below.\\

Suppose that for $p <0$ the spectrum of $A(p)$ is contained in $\{ z: \text{Re}(z) < 0\}$, whereas for $p>0$ the spectrum has elements in $\{ z: \text{Re}(z) > 0\}$. Therefore, the fast subsystem, given by taking the limit $\epsilon=0$, has a bifurcation point at $p=0$. For the full fast-slow system~\eqref{eq:fastslow} with $\epsilon>0$, the slow dynamics $\partial_t p = \epsilon G(0,p)$ can drive the system to the bifurcation point at $p=0$ and potentially induce a critical transition. For the case $\epsilon=0$, the variable $p$ just becomes a parameter and one may still study the motion of $u$ under variation of $p$. More precisely, as $p$ is varying, we can cross the bifurcation threshold $p=0$ present in the fast PDE dynamics $\partial_t u = F_1 u + F_2(u,p)$. For simplicity, we just consider here the case $\epsilon = 0$ and that $p$ is a parameter but see~\cite{GnannKuehnPein} for steps towards a more abstract theory of fast-slow SPDEs. The problem studied is hence of the form of the linearized fast system
\begin{align} \label{eq:originalform}
    \text{d}U(x,t) = A(p) U(x,t) \, \text{d}t + \sigma \, \text{d}W(t) \, .
\end{align}
We discuss this problem setting initially $A(p) = p \operatorname{I} + \operatorname{T}_f$, where $\operatorname{I}$ denotes the identity and $\operatorname{T}_f$ is a multiplication operator associated to a function $f: \mathcal{X} \to \mathbb{R}$. Such choice is directly motivated by the spectral theorem \cite{hall2013quantum}, which associates a multiplication operator to other operators with same spectrum. For the precise technical set-up and assumptions made, we refer to Section \ref{sec:preliminaries}.\\

As mentioned above, prediction of critical transitions is very important in applications as it can help reduce, or even completely evade, its effects. This can be done by finding early-warning signs~\cite{bernuzzi2023bifurcations,blumenthal2021pitchfork}. Our goal is to analyze the problem for early-warning signs in form of an increasing variance as the bifurcation point at $p=0$ is approached. \\

This paper is structured as follows. In Section \ref{sec:preliminaries} we set up the problem and state our main results. In Section \ref{sec:1-dim} we discuss the variance of the solutions of \eqref{eq:originalform} for measurable functions $f$ that are defined on a one-dimensional space, i.e., assuming that $N =1$. We first focus on a specific tool function type and then generalize the obtained results to the case of more general analytic functions. In Section \ref{sec:N-dim} we proceed with the study of the scalar product that defines the variance along certain directions in the space of square-integrable functions, with the domain of $f$ in higher dimensions. While observing convergence as $p\to 0^-$ for $N>1$ dimensions and $f$ under certain assumptions, we further find an upper bound of divergence for analytic functions $f$ on two- and three-dimensional domains. In Section \ref{sec:gen} we discuss how our results are affected by relaxing certain assumptions and we provide examples of the effectiveness of the early warning-signs for different types of linear operators presented in the drift term of \eqref{eq:originalform}. Lastly, in Section \ref{sec:num} we approximate \eqref{eq:originalform} for one- and two-dimensional spatial domains to obtain visual numerical representations and a cross-validation of the discussed theorems.

\section{Preliminaries} \label{sec:preliminaries}
In this paper we study the stochastic partial differential equation
\begin{align} \label{sist}
    \begin{cases}
    \text{d}u(x,t) = \left(f(x)+p\right)\; u(x,t) \, \text{d}t + \sigma \text{d}W(t)\\
    u(\cdot,0) = u_0
    \end{cases}
\end{align}
for $x\in\mathcal{X}$, $t>0$ and $\mathcal{X}\subseteq\mathbb{R}^N$ a connected set with non-empty interior for $N\in\mathbb{N}$. The operator $\operatorname{T}_f$ denotes the realization in $L^2(\mathcal{X})$ of the multiplication operator for $f$, i.e.
\begin{align*}
    \operatorname{T}_fu=fu\quad,\quad \mathcal{D}(\operatorname{T}_f):=\left\{ u\;\;\text{Lebesgue measurable}\;\Big|\; fu\in L^2(\mathcal{X}) \right\}\quad,
\end{align*}
with $\mathcal{D}(\cdot)$ denoting the domain of an operator. We assume that the initial condition of the system satisfies $u_0\in \mathcal{D}(\operatorname{T}_f)$ and $L^2(\mathcal{X})$ is the Hilbert space of square-integrable functions with the usual the scalar product $\langle\cdot,\cdot\rangle$. We denote as $H^{s}(\mathcal{X})$ the Hilbert Sobolev space of degree $s$ on $\mathcal{X}$. We are going to assume that the Lebesgue measurable function $f:\mathcal{X}\to\mathbb{R}$ satisfies
\begin{align} \label{property_f}
    f(x)<0~~ \forall x\in\mathcal{X}\setminus\{x_\ast\}, \quad \int_{\mathcal{X}} \frac{1}{|f(x)-p|} \text{d}x<+\infty \text{  for any $p<0$, and} \quad 
    f(x_\ast)=0,
\end{align}
for a point $x_\ast\in\mathcal{X}$. Without loss of generality, i.e., up to a translation coordinate change in space, we can assume $x_\ast=0$, where $0$ denotes as the origin in $\mathbb{R}^N$. We can then assume, again using a translation of space, that there exists $\delta>0$ such that $[0,\delta]^N\subseteq \mathcal{X}$. We assume also $\sigma>0$ and $p<0$.\\
The construction of the noise process $W$ and the existence of the solution follow from standard methods~\cite{DP} as follows. For fixed $T>0$, we consider the probability space $\left(\Omega, \mathcal{F}, \mathbb{P}\right)$ with $\Omega$ the space of paths $\{X(t)\}_{t\in [0,T]}$ such that $X(t)\in L^2(\mathcal{X})$ for any $t\in[0,T]$, for $\mathcal{F}$ the Borel $\sigma$-algebra on $\Omega$ and with $\mathbb{P}$ a Wiener measure induced by a Wiener process on $[0,T]$. To this probability space we associate the natural filtration $\mathcal{F}_t$ of $\mathcal{F}$ with respect to a Wiener process for $t\in[0,T]$.\\ 
We consider the non-negative operator $Q:L^2(\mathcal{X})\to L^2(\mathcal{X})$ with eigenvalues $\{q_k\}_{k\in\mathbb{N}}$ and eigenfunctions $\{b_k\}_{k\in\mathbb{N}}$.
For $Q$ assumed to be trace-class, the stochastic process $W$ is called $Q$-Wiener process if
\begin{align} \label{QW}
    W(t)=\sum_{i=1}^\infty \sqrt{q_i} b_i \beta^i(t)
\end{align}
holds for a family $\{\beta^i\}_{i\in\mathbb{N}}$ of independent, identically distributed and $\mathcal{F}_t$-adapted Brownian motions. Such series converges in $L^2(\Omega,\mathcal{F},\mathbb{P})$ by \cite[Proposition 4.1]{DP}. Throughout the paper we assume $Q$ to be a bounded operator. In this case there exists a larger space (\cite[Propositon 4.11]{DP}), denoted by $H_1$, on which $Q^\frac{1}{2} L^2(\mathcal{X})$ embeds through a Hilbert-Schmidt operator and such that the series in \eqref{QW} converges in $L^2(\Omega,\mathcal{F},\mathbb{P};H_1)$. The resulting stochastic process $W$ is called a generalized Wiener process.\\
Assuming $u_0$ to be $\mathcal{F}_0$-measurable, properties \eqref{property_f} imply that there exists a unique mild solution for the system \eqref{sist} of the form
\begin{align*}
    u(\cdot,t)=e^{\left(f(\cdot)+p\right)t}u_0+ \sigma \int_0^t e^{\left(f(\cdot)+p\right)(t-s)} \; \textnormal{d}W(s)\quad,
\end{align*}
with $t>0$, as stated by \cite[Theorem 5.4]{DP}. From \cite[Theorem 5.2]{DP} one can obtain the covariance operator of the mild solution of \eqref{sist} as
\begin{align*}
    V(t):=\sigma^2 \int_0^t e^{\operatorname{T}_{f+p} s} Q e^{\operatorname{T}_{f+p} s} \;\textnormal{d}s
\end{align*}
for any $t>0$. We also define $V_\infty:=\underset{t\to\infty}{\lim} V(t)$, which is the operator that is used to construct the early-warning signs that we are interested in, i.e., we are interested in the scaling laws that the covariance operator can exhibit as we vary the parameter $p$ towards the bifurcation point at $p=0$.
\\
We employ the big theta notation \cite{knuth1976big}, $\Theta$, in the limit to $0$, i.e. $b_1=\Theta(b_2)$ if $\underset{s\to 0}{\lim} \frac{b_1(s)}{b_2(s)},\;\underset{s\to 0}{\lim} \frac{b_2(s)}{b_1(s)}>0$ for $b_1,b_2$ locally positive functions. The direction along which such limit is approached is indicated by the space on which the parameter is defined.

\subsection{Results}
We primarily set $Q=\operatorname{I}$, the identity operator on $L^2(\mathcal{X})$. The definition of $V_\infty$ implies, through Fubini's Theorem, that
\begin{align}
    \label{eq:cov_integral}
    \langle V_{\infty}g_1, g_2\rangle 
    &= \int_{\mathcal{X}} g_1(x) \sigma^2 \lim_{t \to \infty} \int_0^t e^{(f(x)+p)r} Q e^{(f(x)+p)r} g_2(x) \, \text{d}r \, \text{d}x \\
    & = \int_{\mathcal{X}} g_1(x)g_2(x) \frac{-\sigma^2}{2(f(x)+p)} \, \text{d}x 
    =\dfrac{\sigma^2}{2} \bigg\langle  \frac{-1}{f+p} g_1, g_2 \bigg\rangle\, , \nonumber
\end{align}
with $g_1, g_2 \in L^2(\mathcal{X})$. We set $g_1=g_2=g$ and, initially, we take  $g=\mathbbm{1}_{[0,\varepsilon]^N}$ or $g=\mathbbm{1}_{[-\varepsilon,\varepsilon]^N}$ for small enough $\varepsilon>0$. These indicator functions allow us to understand the scaling laws near the bifurcation point analytically as they are localized near the zero of $f$ at $x_\ast=0$. More general classes of test functions for the inner product with $V_\infty$ can then be treated in the usual way via the approximation with simple functions.\\

In Section \ref{sec:1-dim}, Theorem \ref{thm:1-dim_analytic} provides the scaling law of $\langle g, V_\infty g \rangle$ for $N=1$ and for an analytic function $f:\mathcal{X}\to\mathbb{R}$. Similarly, in Section \ref{sec:N-dim}, Theorem \ref{thm:2-dim_analytic} and Theorem \ref{thm:3-dim_analytic} provide an upper bound for the rate of divergence of $\langle g, V_\infty g \rangle$ for $N=2$ and $N=3$ respectively. Further, similar results are obtained under relaxed assumptions in Section \ref{sec:gen} on $g$, the operator $Q$ and the linear operator present in the drift component of \eqref{sist}. Lastly, Section \ref{sec:num} provides plots that cross-validate the stated conclusions through the use of implicit Euler-Maruyama method.

\section{One-dimensional case} \label{sec:1-dim}
In the current section we assume $N=1$ and obtain precise rate of divergence for the variance of the solution of \eqref{sist} for different types of functions $f: \mathcal{X} \subseteq \mathbb{R} \to \mathbb{R}$. Such behaviour defines an early-warning sign that preceeds a bifurcation threshold of the system as $p$ approaches $0$ from below.\\
First, we choose a specific function type $f$ to analyze. Afterwards, we expand our analysis by considering general analytic functions and, lastly, we discuss an example in which $f$ is not analytic.

\subsection{Tool function} \label{ch:exfctonedim}
Consider the function $f_{\alpha}$ defined by
\begin{align} \label{eq:exfunc}
    f_{\alpha}(x) := - |x|^{\alpha} \quad \text{ with } \alpha > 0 
\end{align}
with $x$ in a neighbourhood of $x_\ast=0$ and such that \eqref{property_f} holds. This type of function is a useful tool for the study of system \eqref{sist}. Indeed, if for $f$ there exist $c_1,c_2,\varepsilon>0$ such that
\begin{align} \label{hyperb_conv}
    c_1 f_\alpha(x)\leq f(x)\leq c_2 f_\alpha(x)\quad\quad\quad \forall x \in [-\varepsilon,\varepsilon]\cap\mathcal{X},
\end{align}
then we may transfer scaling laws obtained from the tool function directly to results for $f$. We note that up to rescaling of the spatial variable we can assume $\varepsilon=1$ and that $[0,1]\subseteq\mathcal{X}$. Hence, using $\operatorname{T}_{f_{\alpha}}$ to analyse the covariance operator and by \eqref{eq:cov_integral} we obtain 
\begin{align*} 
    \langle V_{\infty}g, g \rangle 
    = - \bigg\langle  \frac{\sigma^{2}}{2(f(x)+p)}g, g \bigg\rangle 
    = \int_{0}^{1} \frac{\sigma^{2}}{
    2(-f(x)-p)} \, \text{d}x 
 \end{align*}
and
\begin{align}  \label{eq:dim1_tool}
    \frac{1}{2c_1}\int_{0}^{1} \frac{\sigma^{2}}{
    x^{\alpha}-p} \, \text{d}x 
    \leq \int_{0}^{1} \frac{\sigma^{2}}{
    2(-f(x)-p)} \, \text{d}x   
    \leq \frac{1}{2c_2}\int_{0}^{1} \frac{\sigma^{2}}{
    x^{\alpha}-p} \, \text{d}x
 \end{align}
for $g(x)=\mathbbm{1}_{[0,1]}$ and certain constants $c_1\geq1\geq c_2$.

\begin{figure}[h!]
    \centering   
    \begin{overpic}[width= 0.6\textwidth]{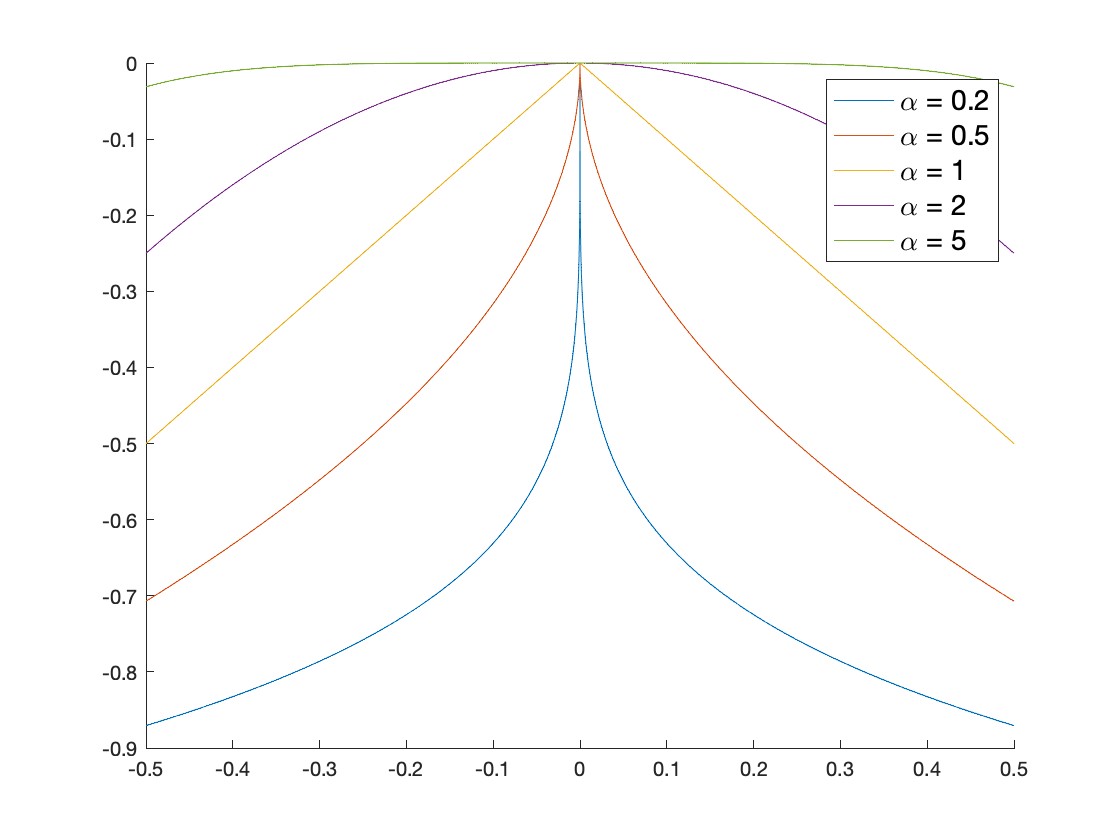}
    \put(510,30){\footnotesize{$x$}}
    \put(40,470){\rotatebox{270}{\footnotesize{$-|x|^{\alpha}$}}}
    \end{overpic}
    \caption{Plot of function $-|x|^{\alpha}$ for different choices of $\alpha$. For $\alpha>1$ the function is $C^1$, with derivative equal to $0$ at $x=0$, therefore flat in $0$. Conversely, for $\alpha\leq 1$ the function is steep at $x=0$.}
    \label{fig:modfunc}
\end{figure}

The following theorem describes the rate of divergence assumed by $\langle g, V_\infty g\rangle$ as $p\to0^-$ for $f=f_\alpha$.

\begin{thm} \label{thm:1-dim_tool}
    For $f=f_\alpha$  defined in \eqref{eq:exfunc}, $Q=\operatorname{I}$ and $\varepsilon>0$, the time-asymptotic covariance $V_\infty$ of the solution of \eqref{sist} along $g(x)=\mathbbm{1}_{[0,\varepsilon]}$, 
    \begin{equation*}
    \langle V_{\infty}g, g \rangle, 
    \end{equation*}
    has a scaling law as $p\to 0^-$ described in Table \ref{tab:onedim}.
\end{thm}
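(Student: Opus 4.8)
The plan is to reduce the statement to the explicit scalar integral already recorded in \eqref{eq:cov_integral} and then to extract the three regimes with a single scaling substitution. With $f=f_\alpha$, $Q=\operatorname{I}$ and $g=\mathbbm{1}_{[0,\varepsilon]}$, formula \eqref{eq:cov_integral} gives
\[
  \langle V_\infty g, g\rangle \;=\; \frac{\sigma^2}{2}\int_0^\varepsilon \frac{\text{d}x}{x^\alpha-p},
\]
where $-p>0$, so the integrand is continuous on $[0,\varepsilon]$ and the integral is finite for every $p<0$. Substituting $x=(-p)^{1/\alpha}y$ yields
\[
  \langle V_\infty g, g\rangle \;=\; \frac{\sigma^2}{2}\,(-p)^{\frac{1}{\alpha}-1}\int_0^{\varepsilon(-p)^{-1/\alpha}}\frac{\text{d}y}{1+y^\alpha},
\]
so the behaviour as $p\to 0^-$ is governed entirely by the growth of the truncated integral $\int_0^{M}(1+y^\alpha)^{-1}\,\text{d}y$ as $M\to\infty$, which depends only on whether $\alpha$ is larger than, equal to, or smaller than $1$.

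For $\alpha>1$ I would argue that $c_\alpha:=\int_0^\infty(1+y^\alpha)^{-1}\,\text{d}y$ is finite and strictly positive (a Beta-function evaluation gives $c_\alpha=\tfrac{\pi}{\alpha\sin(\pi/\alpha)}$, but only finiteness and positivity are used). Since $y\mapsto(1+y^\alpha)^{-1}$ is integrable on $[0,\infty)$ precisely because $\alpha>1$, dominated convergence shows the truncated integral tends to $c_\alpha$ as $p\to 0^-$; hence $\langle V_\infty g, g\rangle\big/(-p)^{\frac{1}{\alpha}-1}\to\sigma^2 c_\alpha/2\in(0,\infty)$, and as $\tfrac{1}{\alpha}-1<0$ this is exactly the divergence rate $\Theta\!\big((-p)^{\frac{1}{\alpha}-1}\big)$. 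For $\alpha=1$ the integral is elementary, $\int_0^\varepsilon(x-p)^{-1}\,\text{d}x=\ln\!\big(1+\varepsilon/(-p)\big)$, which is asymptotic to $-\ln(-p)$ as $p\to 0^-$, so the quantity is $\Theta\!\big(-\ln(-p)\big)$. For $0<\alpha<1$ the map $x\mapsto x^{-\alpha}$ is integrable on $[0,\varepsilon]$, so monotone convergence gives $\int_0^\varepsilon(x^\alpha-p)^{-1}\,\text{d}x\uparrow\int_0^\varepsilon x^{-\alpha}\,\text{d}x=\varepsilon^{1-\alpha}/(1-\alpha)\in(0,\infty)$, whence $\langle V_\infty g, g\rangle=\Theta(1)$ and no divergence occurs. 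These three outcomes are what Table~\ref{tab:onedim} records.

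The computation is essentially elementary, and I expect the only delicate points to be bookkeeping rather than conceptual: (i) justifying the interchange of limit and integral in the regime $\alpha>1$, where the integrability of the natural dominating function is the borderline condition $\alpha>1$ itself; and (ii) checking that the limiting constants are strictly positive and not merely finite, since the $\Theta$ convention of Section~\ref{sec:preliminaries} requires both $\lim b_1/b_2$ and $\lim b_2/b_1$ to be positive. It is also worth remarking, and immediate from the scaling substitution, that the leading-order rate is independent of the window width $\varepsilon$ (for $0<\alpha<1$ the limiting constant does depend on $\varepsilon$, but it stays a positive constant): the divergence is produced entirely by the behaviour of $f_\alpha$ at its single zero $x_\ast=0$, which is exactly why the localized test function $\mathbbm{1}_{[0,\varepsilon]}$ suffices to detect it, and why the symmetric window $\mathbbm{1}_{[-\varepsilon,\varepsilon]}$ would give the same rates by evenness of $f_\alpha$.
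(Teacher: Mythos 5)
Your proposal is correct and follows essentially the same route as the paper's proof: the same reduction to the scalar integral $\int_0^\varepsilon (x^\alpha-p)^{-1}\,\text{d}x$, the same rescaling substitution $y=x(-p)^{-1/\alpha}$ for $\alpha>1$ (cf.\ \eqref{rate}), the same elementary logarithm for $\alpha=1$, and the same limiting argument for $0<\alpha<1$. Your explicit attention to the strict positivity of the limiting constants and the justification of the limit--integral interchange is a slightly more careful bookkeeping of points the paper passes over quickly, but it is not a different argument.
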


\def\arraystretch{1.5}\tabcolsep=10pt
\begin{table}[h!]
  \begin{center}
    \begin{tabular}{c|c}
      \specialrule{.1em}{.05em}{.05em}
      \textbf{Case} & \textbf{Scaling law for $p \to 0^{-}$} \\
      \hline
      $0< \alpha < 1$ & $ 1 $ \\
      $\alpha = 1$ & $- \log(-p)$ \\
      $\alpha > 1$ & $(-p)^{-1 + \frac{1}{\alpha}}$ \\
      $\alpha \to \infty$ & $(-p)^{-1}$\\
      \specialrule{.1em}{.05em}{.05em}
    \end{tabular}
    \caption{Scaling law in dimension $N=1$ for the function $f=f_{\alpha}(x)$ and $g=\mathbbm{1}_{[0,\varepsilon]}$. }
    \label{tab:onedim}
  \end{center}
\end{table}

\begin{proof}
The scaling law of $\langle V_{\infty}g, g \rangle$ for $p\to 0^-$ is equivalent to the one exhibited by $\int_{0}^{1} \frac{1}{x^{\alpha}-p} \, \text{d}x $, as described in \eqref{eq:dim1_tool}. In such limit in $p$ we obtain
\begin{align*}
    \lim_{p \to 0^{-}} \int_{0}^{1} \frac{1}{x^{\alpha}-p} \, \text{d}x 
    = \int_{0}^{1} \frac{1}{x^{\alpha}} \, \text{d}x = 
    \begin{cases}
    < \infty &\text{for } 0 < \alpha < 1 \, , \\
    = \infty &\text{for } \alpha \geq 1 \, . 
    \end{cases}
\end{align*}
For the case $\alpha = 1$, by substituting $y = x - p$ we find that
\begin{align*} 
    \int_{0}^{1} \frac{1}{x - p} \, \text{d}x = \int_{-p}^{1 - p} \frac{1}{y} \, \text{d}y = \log (1 - p) - \log(-p) = \Theta(-\log(-p)) \quad \text{ for } p \to 0^{-} \, . 
 \end{align*}

We now consider the case $\alpha > 1$. Through the substitution $y=x(-p)^{-\frac{1}{\alpha}}$ we obtain
\begin{align} \label{rate}
    \int_{0}^{1} \frac{1}{x^{\alpha}-p} \, \text{d}x = \frac{1}{-p}\mathlarger{\int}_{0}^{1} \frac{1}{\left(x(-p)^{-\frac{1}{\alpha}}\right)^{\alpha}+1}\, \text{d}x = (-p)^{-1 + \frac{1}{\alpha}} \int_{0}^{(-p)^{-\frac{1}{\alpha}}} \frac{1}{y^{\alpha}+1}\, \text{d}y \, .
\end{align}
Since $\alpha > 1$, we find that 
\begin{align}  \label{eq:onedim_conv}
    \lim_{p \to 0^{-}} \int_{0}^{ (-p)^{-\frac{1}{\alpha}}} \frac{1}{y^{\alpha}+1}\, \text{d}y < \infty \, . 
 \end{align}
Hence, taking the limit in $p$ to zero from below for the equation \eqref{rate} we find divergence with the rate
\begin{align*} 
    (-p)^{-1 + \frac{1}{\alpha}} \int_{0}^{ (-p)^{-\frac{1}{\alpha}}} \frac{1}{y^{\alpha}+1}\, \text{d}y = \Theta\left((-p)^{-1 +\frac{1}{\alpha}}\right)\, .
 \end{align*}
Further, we see that for $\alpha$ approaching infinity this expression is also well-defined with rate of divergence $\Theta\left(\frac{1}{-p}\right)$.
\end{proof}

Theorem \ref{thm:1-dim_tool} states that for the function $f_{\alpha}$ defined by (\ref{eq:exfunc}) the variance along $g$ is converging for $0 < \alpha < 1$. Such function types $f_{\alpha}$ display a pointy shape on $0$, as seen in Figure \ref{fig:modfunc}, given by the fact that their left and right first derivative assume infinite values. Alternatively, we observe that for $\alpha \geq 1$ the variance along $g$ is diverging, indicating that as the graph gets flatter and smoother increasing $\alpha$ a divergence appears. It is interesting to note that a similar scaling law behaviour, associated to the intermittency scaling law of an ODE dependent on a parameter near a non-smooth fold bifurcation, has been found in \cite[Table 1]{KU3}.

\subsection{General analytic functions}
We have found that $\langle g,V_\infty g \rangle$ converges for $\alpha < 1$ and diverges for $\alpha \geq 1$, with corresponding rate of divergence, for the limit $p\to0^-$ with the functions $f_{\alpha}(x) = - |x|^{\alpha}$ and $g=\mathbbm{1}_{[0,\varepsilon]}$. Our aim in this subsection is to generalize this result considerably. In particular, we consider now analytic functions $f$ such that \eqref{property_f} holds.
Applying Taylor's theorem and due to the fact that $f$ vanishes at $x_\ast=0$, the function $f$ is of the form 
\begin{align*}
    f(x)= -\sum_{n=1}^{\infty}a_n x^{n}
\end{align*} 
for any $x\in\mathcal{X}$ and with coefficients $\{a_n\}_{n\in\mathbb{N}}\subset\mathbb{R}$ such that \eqref{property_f} holds. We assume, up to reparametrization, that $[0,\varepsilon]\subseteq\mathcal{X}$. The following theorem provides scaling law, which can be used as an early-warning sign, of the expression $\langle g, V_\infty g\rangle$ for an analytic $f$ and $g(x)=\mathbbm{1}_{[0,\varepsilon]}$.

\begin{thm} \label{thm:1-dim_analytic}
    Set $f(x)=-\overset{\infty}{\underset{n=1}{\sum}}a_n x^{n}$ for all $x \in \mathcal{X} \subseteq \mathbb{R}$, that satisfies \eqref{property_f},  $\{a_n\}_{n\in\mathbb{N}}\subset\mathbb{R}$ and $Q=\operatorname{I}$. Let $m\in\mathbb{N}$ denote the index for which $a_n = 0$ for any $n\in\{1,\dots,m-1\}$ and $a_m\neq0$.\footnote{For odd $m$ it is implied that $\mathcal{X}\subseteq\mathbb{R}_{\geq0}:= \{ x\in \mathbb{R}: x \geq 0 \}$, or $\mathcal{X}\subseteq\mathbb{R}_{\leq0}:= \{ x\in \mathbb{R}: x \leq 0 \}$ up to rescaling, due to the sign of $f$.} Then the time-asymptotic covariance of the solution of \eqref{sist} along $g(x)=\mathbbm{1}_{[0,\varepsilon]}$, $\langle V_{\infty}g, g \rangle$, has the scaling laws as $p\to 0^-$ as described in Table \ref{tab:onedim}, now depending on the value of $m=\alpha>0$.\footnote{Alternatively, we can assume to have a function $f \in C^k$ such that $k\geq m$. This leads to the study of the Peano remainder instead of the whole series.} 
\end{thm}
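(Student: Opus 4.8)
The plan is to reduce the analytic case to the tool-function case of Theorem~\ref{thm:1-dim_tool}. Concretely, I would show that an analytic $f$ satisfying \eqref{property_f} and vanishing to order $m$ at $x_\ast=0$ also satisfies the two-sided comparison \eqref{hyperb_conv} with $\alpha=m$ on a sufficiently small neighbourhood of the origin; the stated scaling law then transfers through \eqref{eq:dim1_tool}. First note that the index $m$ is well defined: $m\geq1$ since the expansion of $f$ starts at $n=1$, and $m<\infty$ since $f\not\equiv0$ by \eqref{property_f}.

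The key step is a leading-order factorisation of $f$. Strictly within its radius of convergence one may write
\[
-f(x)=a_m x^m\Bigl(1+\sum_{n\geq m+1}\tfrac{a_n}{a_m}\,x^{n-m}\Bigr)=:a_m x^m\bigl(1+h(x)\bigr),
\]
where $h$ is analytic near $0$ with $h(0)=0$, so that $h(x)\to0$ as $x\to0$. By \eqref{property_f}, $-f(x)>0$ for $x\neq0$ near $0$; since $x^m$ has a fixed sign on $[0,\varepsilon]$ (and, for odd $m$, on all of $\mathcal{X}$ by the stated convention, up to the reflection $x\mapsto-x$), this forces $a_m x^m>0$ there, hence $-f(x)=|a_m|\,|x|^m\bigl(1+h(x)\bigr)$ with $1+h(x)>0$. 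Fixing $\eta\in(0,1)$ and choosing $\varepsilon_0>0$ so that $|h|\leq\eta$ on $[-\varepsilon_0,\varepsilon_0]\cap\mathcal{X}$, and then applying the harmless spatial rescaling $x\mapsto|a_m|^{-1/m}x$, we obtain precisely \eqref{hyperb_conv} with $\alpha=m$, $c_1=1+\eta\geq1$ and $c_2=1-\eta\leq1$.

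It remains to assemble the conclusion. Writing $\langle V_\infty g,g\rangle=\int_0^\varepsilon\frac{\sigma^2}{2(-f(x)-p)}\,\text{d}x$, the contribution of $[\varepsilon_0,\varepsilon]$ stays bounded as $p\to0^-$ because $-f$ is bounded below there by a positive constant, while the contribution of $[0,\varepsilon_0]$ has, by \eqref{eq:dim1_tool} and Theorem~\ref{thm:1-dim_tool}, the scaling law of the $\alpha=m$ row of Table~\ref{tab:onedim}. Since that row diverges as $p\to0^-$ for every integer $m\geq1$, the bounded tail is absorbed and $\langle V_\infty g,g\rangle=\Theta(-\log(-p))$ for $m=1$ and $\Theta\bigl((-p)^{-1+1/m}\bigr)$ for $m\geq2$, as claimed (the rows $0<\alpha<1$ and $\alpha\to\infty$ being, respectively, vacuous and a limiting case for integer $m$). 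The only delicate points are bookkeeping: verifying that $h$ is uniformly negligible on a fixed small interval (immediate from absolute convergence of the power series strictly within its radius of convergence, or from the Peano form of Taylor's remainder if $f$ is merely $C^k$ with $k\geq m$, covering the alternative hypothesis of the second footnote) and tracking the sign of $x^m$ so the absolute values are placed correctly when $m$ is odd.
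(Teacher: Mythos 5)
Your proposal is correct and follows essentially the same route as the paper: both arguments sandwich $-f(x)$ between constant multiples of $a_m x^m$ near the origin (the paper via the limit $\lim_{x\to0^+}\sum_n a_n x^n/(a_m x^m)=1$ extended to a uniform two-sided bound on $(0,1]$, you via the factorisation $-f=a_mx^m(1+h)$ on a small neighbourhood plus a bounded tail) and then invoke Theorem~\ref{thm:1-dim_tool} through \eqref{eq:dim1_tool}. Your version is slightly more explicit about the sign bookkeeping for odd $m$ and the absorption of the off-neighbourhood contribution, but the decomposition and the key comparison are the same.
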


\begin{proof}
As in the proof of Theorem \ref{thm:1-dim_tool}, up to rescaling of the variable $x$, we can choose $\varepsilon=1$.
Analyzing the variance along $g$, we obtain that
\begin{align}  \label{eq:onedim_analytic_a1}
     \langle V_{\infty}g, g \rangle  &= \frac{\sigma^2}{2} \mathlarger{\mathlarger{\int}}_{0}^{1} \frac{1}{ \overset{\infty}{\underset{n=m}{\sum}}a_nx^n-p }\, \text{d}x  \quad .
 \end{align}
For positive $x$ close to zero, the sum $\overset{\infty}{\underset{n=1}{\sum}}a_nx^n$ is dominated by the leading term $a_m x^m$, since
\begin{align*} 
    \lim_{x \to 0^{+}} \frac{\overset{\infty}{\underset{n=1}{\sum}}a_nx^n}{a_m x^m} 
    = \lim_{x \to 0^{+}} \frac{\overset{\infty}{\underset{n=m}{\sum}}a_nx^n}{a_m x^m} 
    = \lim_{x \to 0^{+}} \frac{a_m + \overset{\infty}{\underset{n=m+1}{\sum}}a_nx^{n-m}}{a_m} = 1 \;.
 \end{align*}
Therefore there exists a constant $C > 1$ such that for any $x \in (0, 1]$ 
\begin{align*} 
    \frac{1}{C} a_m x^m \leq \overset{\infty}{\underset{n=1}{\sum}}a_nx^n \leq C a_m x^m
 \end{align*}
holds true. Hence, for \eqref{eq:onedim_analytic_a1} we obtain
\begin{align*} 
     \frac{\sigma^2}{C} \int_0^{1} \frac{1}{a_m x^m - p}\, \text{d}x 
     \leq \sigma^2 \mathlarger{\mathlarger{\int}}_0^{1} \frac{1}{\overset{\infty}{\underset{n=1}{\sum}}a_nx^n-p}\, \text{d}x 
     \leq C \sigma^2 \int_0^{1} \frac{1}{a_m x^m - p}\, \text{d}x \,. 
 \end{align*}
This result is equivalent to \eqref{eq:dim1_tool}, in the sense that it implies that the rate of divergence of $\langle V_{\infty}g, g \rangle$ is described in Table \ref{tab:onedim} with $m=\alpha$.
\end{proof}

\begin{remark} \label{rmk:general_g}
We have considered functions $g$ that are bounded. Through similar methods as the proof of Theorem \ref{thm:1-dim_tool}, a scaling law can be obtained for more general families of functions in $L^2(\mathcal{X})$. For example, suppose we consider $g=x^{-\gamma} \mathbbm{1}_{[0,\varepsilon]}$, $\gamma<\frac{1}{2}$, $f=f_\alpha$ and $\alpha>0$ then this yields
\begin{align*}
    \langle g, V_\infty g \rangle = \frac{\sigma^2}{2}\int_0^\varepsilon \frac{1}{x^{2\gamma}} \frac{1}{x^\alpha-p} \textnormal{d} x.
\end{align*}
Hence, for $0<2\gamma+\alpha<1$, we obtain
\begin{align*}
    \underset{p\to0^-}{\lim}\langle g, V_\infty g \rangle = \frac{\sigma^2}{2}\int_0^\varepsilon \frac{1}{x^{2\gamma+\alpha}} \textnormal{d} x<+\infty.
\end{align*}
Setting instead $2\gamma+\alpha\geq1$, we get
\begin{align*}
    \langle g, V_\infty g \rangle &= \frac{\sigma^2}{2}\int_0^\varepsilon \frac{1}{x^{2\gamma}} \frac{1}{x^\alpha-p} \textnormal{d} x\\
    &= \frac{\sigma^2}{2}(-p)^{-1-\frac{2\gamma}{\alpha}}\mathlarger{\int}_0^\varepsilon \frac{1}{\left(x (-p)^{-\frac{1}{\alpha}}\right)^{2\gamma}} \frac{1}{\left(x (-p)^{-\frac{1}{\alpha}}\right)^\alpha +1} \textnormal{d} x\\
    &= \frac{\sigma^2}{2}(-p)^{-1+\frac{1-2\gamma}{\alpha}}\int_0^{\varepsilon(-p)^{-\frac{1}{\alpha}}} \frac{1}{y^{2\gamma}} \frac{1}{y^\alpha +1} \textnormal{d} y \quad ,
\end{align*}
for $y=x(-p)^{-\frac{1}{\alpha}}$. The scaling law for $\langle g, V_\infty g \rangle$ in $p\to0^-$ is summarized in Table \ref{tab:extra}. This result generalizes the statement in \cite[Theorem 4.4]{KU} as for any analytic $f$ that satisfies \eqref{property_f} and $g$ in a dense subset of $L^2(\mathcal{X})$ an exact scaling  can be obtained.

\begin{table}[h!]
  \begin{center}
    \begin{tabular}{c|c}
      \specialrule{.1em}{.05em}{.05em}
      \textbf{Case} & \textbf{Scaling law for $p \to 0^{-}$} \\
      \hline
      $0< 2\gamma+\alpha < 1$ & $ 1 $ \\
      $2\gamma+\alpha = 1$ & $- \log(-p)$ \\
      $2\gamma+\alpha > 1$ & $(-p)^{-1 + \frac{1-2\gamma}{\alpha}}$ \\
      $2\gamma \to 1$\;\;\textnormal{or}\;\; $\alpha \to \infty$ & $(-p)^{-1}$\\
      \specialrule{.1em}{.05em}{.05em}
    \end{tabular}
    \caption{Scaling law in dimension $N=1$ for the function $f=f_{\alpha}(x)$ and $g=x^{-\gamma} \mathbbm{1}_{[0,\varepsilon]}$. }
    \label{tab:extra}
  \end{center}
\end{table}
\end{remark}

\begin{remark} \label{rmk:false_mirror}
We found that the scaling law of the time-asymptoptic variance, along an indicator function, of the solution of \eqref{sist} is shown in Table \ref{tab:onedim} for every function $f$ that satisfies \eqref{property_f} and \eqref{hyperb_conv} for $\alpha>0$.
 Clearly, this does not include all possible functions $f$. We take as example the case in which there exists $\delta\geq1$ such that $[-\delta,\delta]\subseteq\mathcal{X}$ and functions $f$ that converge at two different rates as $x$ approaches $0^-$ and $0^+$. We consider 
    \begin{align*} 
    f(x) =
    \begin{cases}
    f_1(x) & \text{for }\; 0 \geq x \in \mathcal{X} \, ,\\
    f_2(x) & \text{for }\; 0 < x \in \mathcal{X} \, ,\\
    \end{cases}
     \end{align*}
    for smooth functions $f_1:\mathcal{X}\cap\mathbb{R}_{\leq0}\rightarrow\mathbb{R}$ and $f_2:\mathcal{X}\cap\mathbb{R}_{\geq0}\rightarrow\mathbb{R}$ and $f$ that satisfies \eqref{property_f}. We assume $g(x)=\mathbbm{1}_{[ -1, 1 ]}(x)$ and by (\ref{eq:cov_integral}) we get
    \begin{align} \label{false_mirror}
        \langle V_{\infty}g, g \rangle = \int_{- 1}^{1} \frac{- \sigma^2}{2(f(x)+p)} \, \text{d}x 
        = \frac{\sigma^2}{2}\int_{-1}^{0} \frac{1}{-f_1(x)-p} \, \text{d}x + \frac{\sigma^2}{2}\int_{0}^{1} \frac{1}{-f_2(x)-p} \, \text{d}x \, .
    \end{align}
    We consider each summand separately and take the limit in $p$ to zero from below. If $\underset{x\rightarrow{0^+}}{\lim}f_2(x)<0$, then  
    \begin{align*}
        \lim_{p \to 0^{-}} \int_{0}^{1} \frac{1}{-f_2(x)-p} \, \text{d}x<+\infty
    \end{align*}
    and the scaling rate of $\langle V_{\infty}g, g \rangle$ is equivalent to the one given by by $\int_{-1}^{0} \frac{1}{-f_1(x)-p} \, \text{d}x$. Otherwise it is dictated by the highest rate given by the two summands. Such rates are shown in Table \ref{tab:onedim} if $f_1$ and $f_2$ are analytic or have the same order of convergence to $0$ as $f_\alpha$ for any $\alpha>0$.
\end{remark}

\section{Higher-dimensional cases} \label{sec:N-dim}

In the present section we obtain upper bounds for scaling of the covariance of the solutions of the system \eqref{sist} along chosen functions $g$. In this case we consider $N>1$, therefore assuming that the system \eqref{sist} is studied in higher spatial dimensions. We find that, for certain functions $f:\mathcal{X}\subseteq\mathbb{R}^N\to \mathbb{R}$, the early-warning signs display convergence of the variance along the mentioned directions in the square-integrable function space, $L^2(\mathcal{X})$.\\

For the remainder of this section, we assume that $f$ is analytic and satisfies \eqref{property_f} for $x_\ast=0\in\mathcal{X}\subseteq\mathbb{R}^N$. Hence, $f$ is of the form
\begin{align} \label{eq:highdim_analytic}
    f(x) = - \sum_{j \in \mathcal{C}} a_j x^j \quad \text{ for } x = (x_1, \dots, x_N)\in\mathcal{X}\,,
\end{align}
where $j$ is a multi-index, i.e., $x^j = \overset{N}{\underset{n=1}{\prod}} x_n^{i_n}$ with the collection $\mathcal{C}$ defined by the set
\begin{align*} 
     \mathcal{C} = \left\{j = (i_1, \ldots, i_N) \in \left\{\mathbb{N}\cup\{0\}\right\}^N \right\}\, .
 \end{align*} 
 The coefficients $a_j$ are real-valued and their signs satisfy \eqref{property_f} as discusses further in the paper. We assume that there exists $\varepsilon>0$ such that $[0, \varepsilon]^N\subseteq\mathcal{X}$, up to rescaling of the space variable $x$. Properties \eqref{property_f} imply that equation \eqref{eq:cov_integral} holds, hence for $g(x) = \mathbbm{1}_{[0, \varepsilon]^N}(x)$ we find
\begin{align*}
    \langle V_{\infty} g, g \rangle 
    = \sigma^2 \bigg\langle  \frac{-1}{2(f+p)} g, g \bigg\rangle 
    = \frac{\sigma^2}{2} \int_0^\varepsilon \cdots \int_0^\varepsilon \frac{-1}{f(x)+p} \, \text{d}x 
    = \frac{\sigma^2}{2} \mathlarger{\int}_0^\varepsilon \cdots \mathlarger{\int}_0^\varepsilon \frac{1}{\underset{j \in \mathcal{C}}{\sum}a_jx^j - p} \, \text{d}x \;.
\end{align*}

In the next proposition we prove that for any $f$ in dense subset of the analytic functions space, that satisfies \eqref{property_f}, the variance $\langle V_{\infty} g, g \rangle$ converges for $p\to 0^-$. 
\begin{prop} \label{conv_prop}
    Consider dimension $N>1$, and take the indices $\{i_k \}_{k \in \{ 1, \ldots, N\}}$ as a permutation of $\{ 1, \ldots, N\}$. Furthermore, suppose there exist two multi-indices $j_1,j_2\in\mathcal{C}$ such that $a_{j_1}, a_{j_2} > 0$ and that each of these multi-indices corresponds to the multiplication of only one $x_{i_1}$, resp. $x_{i_2}$, meaning that $j_1$, resp. $j_2$, are zero everywhere with the exception of the $i_1$-th, resp. $i_2$-th, position where they are $1$.
    Then it holds that 
    \begin{align*} 
       \lim_{p \to 0^{-}} \langle V_{\infty}g,g \rangle < \infty 
     \end{align*}
    for any $\varepsilon>0$ and $g(x) = \mathbbm{1}_{[0, \varepsilon]^N}(x)$ .
\end{prop}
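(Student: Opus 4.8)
The plan is to bound the integrand from above using only the two distinguished linear monomials guaranteed by the hypothesis, reducing the $N$-dimensional integral to a product that includes a one-dimensional integral known to converge. Concretely, since $a_{j_1}, a_{j_2} > 0$ and all other coefficients $a_j$ contribute terms with a definite sign governed by \eqref{property_f}, on the cube $[0,\varepsilon]^N$ (with $\varepsilon$ chosen small, which is harmless up to rescaling) we have
\begin{align*}
    \sum_{j \in \mathcal{C}} a_j x^j - p \;\geq\; a_{j_1} x_{i_1} + a_{j_2} x_{i_2} - p \;\geq\; a_{j_1} x_{i_1} - p \,,
\end{align*}
after discarding the nonnegative remainder. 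The first inequality needs a short justification: near $x_\ast = 0$, the analytic function $-f(x)=\sum_{j} a_j x^j$ is dominated, in the sense used in the proof of Theorem \ref{thm:1-dim_analytic}, by its lowest-order terms, which by \eqref{property_f} keep $-f$ strictly positive off the origin; so on a sufficiently small cube the sum of all monomials other than $a_{j_1}x_{i_1}$ and $a_{j_2}x_{i_2}$ stays above (a negative multiple of) those two, and in particular $\sum_{j\in\mathcal C} a_j x^j \ge a_{j_1}x_{i_1}$ there. (If one prefers to avoid even this, one can instead keep a small fraction of the two linear terms and absorb the rest; the argument is the same.)

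Next I would carry out the estimate
\begin{align*}
    \langle V_\infty g, g\rangle
    = \frac{\sigma^2}{2} \int_0^\varepsilon \!\!\cdots\! \int_0^\varepsilon \frac{1}{\sum_{j\in\mathcal C} a_j x^j - p}\, \mathrm{d}x
    \;\leq\; \frac{\sigma^2}{2}\, \varepsilon^{\,N-1} \int_0^\varepsilon \frac{1}{a_{j_1} x_{i_1} - p}\, \mathrm{d}x_{i_1}\,,
\end{align*}
where the factor $\varepsilon^{N-1}$ comes from trivially integrating out the remaining $N-1$ variables, whose integrand no longer depends on them. By the substitution $y = x_{i_1} - p/a_{j_1}$, the remaining one-dimensional integral equals $a_{j_1}^{-1}\big(\log(\varepsilon - p/a_{j_1}) - \log(-p/a_{j_1})\big)$, which, although it diverges like $-\log(-p)$ as $p\to 0^-$, is finite for every fixed $p<0$. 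Hence $\lim_{p\to 0^-}\langle V_\infty g,g\rangle$ — wait, the bound I just wrote still diverges logarithmically, so I need a sharper reduction: I should use \emph{both} distinguished monomials. Keeping $a_{j_1}x_{i_1} + a_{j_2}x_{i_2} - p$ in the denominator and integrating out only the $N-2$ other variables gives a factor $\varepsilon^{N-2}$ times a genuinely two-dimensional integral $\int_0^\varepsilon\!\int_0^\varepsilon (a_{j_1}x_{i_1}+a_{j_2}x_{i_2}-p)^{-1}\,\mathrm dx_{i_1}\mathrm dx_{i_2}$, whose limit as $p\to 0^-$ is $\int_0^\varepsilon\!\int_0^\varepsilon (a_{j_1}s + a_{j_2}t)^{-1}\,\mathrm ds\,\mathrm dt$. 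Since $N>1$ guarantees two distinct variables are available, this is exactly where the hypothesis is used.

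The final step is to check that $\int_0^\varepsilon\!\int_0^\varepsilon (a_{j_1}s + a_{j_2}t)^{-1}\,\mathrm ds\,\mathrm dt < \infty$. Integrating in $s$ first yields $a_{j_1}^{-1}\big(\log(a_{j_1}\varepsilon + a_{j_2}t) - \log(a_{j_2}t)\big)$, and the only possible singularity is the $-\log(a_{j_2}t)$ term near $t=0$, which is integrable on $[0,\varepsilon]$; so the double integral is finite. By monotone convergence (the integrands increase as $p\uparrow 0$) the limit of $\langle V_\infty g,g\rangle$ equals $\tfrac{\sigma^2}{2}\varepsilon^{N-2}$ times this finite constant, proving the claim. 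The main obstacle is purely the bookkeeping in the first paragraph: making precise that dropping all monomials except the two linear ones only \emph{decreases} the denominator on a small enough cube, which rests on the lowest-order-domination argument already used for Theorem \ref{thm:1-dim_analytic} together with the sign constraints \eqref{property_f}; once that is in place, everything else is the elementary two-dimensional computation above.
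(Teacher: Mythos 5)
Your proposal is correct and follows essentially the same route as the paper: bound the denominator from below by $a_{j_1}x_{i_1}+a_{j_2}x_{i_2}-p$ up to a constant (justified, as in the paper, by the fact that $-f$ is locally comparable to its two linear monomials together with continuity and compactness of the support of $g$), integrate out the remaining $N-2$ variables trivially, and check that the resulting two-dimensional integral stays bounded as $p\to 0^-$. The only cosmetic differences are that the paper evaluates the two-dimensional integral in closed form whereas you argue finiteness via the integrable logarithmic singularity plus monotone convergence, and that your concluding ``equals'' should read ``is bounded above by a constant times,'' since the absorption constant from the first inequality was dropped.
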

\begin{proof}
    Without loss of generality, we assume that $j_1 = (1,0,0 \ldots)$ and $j_2 = (0,1,0 \ldots)$. Then we obtain 
\begin{align*}
    \langle V_{\infty}g,g \rangle &= \frac{\sigma^2}{2} \mathlarger{\int}_0^\varepsilon \cdots \mathlarger{\int}_0^\varepsilon \frac{1}{\underset{j \in \mathcal{C}}{\sum}a_jx^j - p}\, \text{d}x_N \cdots \, \text{d}x_1 \nonumber \\
    &= \frac{\sigma^2}{2} \mathlarger{\int}_0^\varepsilon \cdots \mathlarger{\int}_0^\varepsilon \frac{1}{a_{j_1} x_1 + a_{j_2} x_2 + \underset{j \in \mathcal{C} \setminus \{j_1, j_2\}}{\sum}a_jx^j - p}\, \text{d}x_N \cdots \, \text{d}x_1  \\
    &\leq C \frac{\sigma^2}{2} \int_0^\varepsilon \cdots \int_0^\varepsilon \frac{1}{x_1 + x_2 -p} \text{d}x_N \cdots \, \text{d}x_1 \, \nonumber
 \end{align*}
where the last inequality is satisfied by a constant $C>0$ and follows from the fact that $f$ converges to $0\in\mathbb{R}^2$ equivalently to $-a_1 x_1 - a_2 x_2$, the continuity of $f$, the compactness of the support of $g$ and $a_{j_1}, a_{j_2} > 0 > p$. Then we obtain  
\begin{align*} 
    & \lim_{p \to 0^{-}} \int_0^\varepsilon \cdots \int_0^\varepsilon \frac{1}{x_1 + x_2 - p} \text{d}x_N \cdots \, \text{d}x_1 
    = \varepsilon^{N-2} \lim_{p \to 0^{-}} \int_0^\varepsilon \int_0^\varepsilon \frac{1}{x_1 + x_2 - p} \text{d}x_2 \, \text{d}x_1\\
    &= \varepsilon^{N-2} \lim_{p \to 0^{-}} \int_0^\varepsilon \log (\varepsilon+x_1 - p) - \log (x_1 -p) \, \text{d}x_1 \\
    &= \varepsilon^{N-2} \lim_{p \to 0^{-}} \Big( (2\varepsilon-p)\log(2\varepsilon-p)-(2\varepsilon-p)
    -2(\varepsilon-p)\log(\varepsilon-p)+2(\varepsilon-p)
    -p\log(-p)+p \Big)\\
    &= \varepsilon^{N-2} \lim_{p \to 0^{-}} \Big( (2\varepsilon-p)\log(2\varepsilon-p)
    -2(\varepsilon-p)\log(\varepsilon-p)-p\log(-p) \Big)<+\infty
 \end{align*}
which concludes the proof.
\end{proof}

\begin{example} \label{ex:zero}
We study the trivial case in which we set $f$ in \eqref{sist} to be $f_\infty(x):= 0$ for any $x$ in an neighbourhood of $x_\ast$, in order to exclude it from the following computations. Whereas the function $f_\infty$ does not satisfy the assumptions on the sign of $f$ in \eqref{property_f}, it is an interesting and easy to study limit case, so we include it here. We obtain the variance along $g$ to be 
\begin{align*} 
    &\langle V_{\infty} g, g \rangle 
    = \frac{\sigma^2}{2} \int_0^\varepsilon\dots \int_0^\varepsilon \frac{1}{f_\infty(x_1, \dots, x_N)-p}\, \text{d}x_N \dots \, \text{d}x_1
    = \frac{\sigma^2}{2} \int_0^\varepsilon \dots \int_0^\varepsilon \frac{1}{-p}\, \text{d}x_N \dots \, \text{d}x_1 
    = -\frac{\sigma^2}{2p} \varepsilon^N \, .
 \end{align*}
Taking the limit in $p$ we obtain that 
\begin{align*}
    \lim_{p\to 0^{-}} \langle V_{\infty} g,g \rangle = \lim_{p\to 0^{-}}-\frac{\sigma^2}{2p} \varepsilon^N= \infty \,.
\end{align*}
Hence, we observe divergence with rate $\Theta\left(- p^{-1}\right)$ for $p$ approaching zero from below, if $f$ in \eqref{sist} is, locally, the null function.
\end{example}

\subsection{Upper bounds}
In the remainder of this section we find upper bounds for the scaling law of $\langle V_{\infty}g,g \rangle$ for the case in which the dimension $N$ of the domain of the function $f$ is bigger than $1$. From the construction of $f$ in \eqref{eq:highdim_analytic} we define the set of multi-indices
\begin{align}  \label{C+}
    \mathcal{C}^{+} := \Big\{ j=(i_1,\dots,i_N)\in \mathcal{C} \Big| \; a_d=0\;\;\forall d=(d_1,\dots,d_N)\in \mathcal{C} \;\;\text{s.t.}\;\; d_n\leq i_n\;\; \forall n\in\{1,\dots,N\} \;\;\text{and}\;\; d\neq j\Big\}\;.
 \end{align}

The following lemma introduces an upper bound of $\langle V_{\infty}g,g \rangle$ as $p\to0^-$. The scaling law induced by this upper bound is studied further below. 

\begin{lm} \label{lm:N-dim_up}
    Set $f(x)= - \underset{j \in \mathcal{C}}{\sum}a_j x^j$ for all $x \in \mathcal{X} \subseteq \mathbb{R}^N$, that satisfies \eqref{property_f}, $\{a_j\}_{j\in\mathcal{C}}\subset\mathbb{R}$, $\varepsilon>0$ and $Q=\operatorname{I}$. Fix $j_{\ast} \in \mathcal{C}^{+}$, defined in \eqref{C+}. Then the time-asymptotic covariance of the solution of \eqref{sist}, $V_\infty$, satisfies,
    \begin{align*}
        \langle V_{\infty}g, g \rangle 
        \leq
        \Theta\left( \int_0^\varepsilon \cdots \int_0^\varepsilon \frac{1}{a_{j_\ast} x^{j_\ast} -p} \, \text{d}x_1 \dots \text{d}x_N \right)\;,
    \end{align*}
    for $g(x)=\mathbbm{1}_{[0,\varepsilon]^N}$.
    
\end{lm}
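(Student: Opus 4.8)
The plan is to bound the integrand $\frac{1}{\sum_{j\in\mathcal{C}}a_j x^j - p}$ from above by a constant multiple of $\frac{1}{a_{j_\ast}x^{j_\ast}-p}$, uniformly on the cube $[0,\varepsilon]^N$, and then integrate. The key observation is that $j_\ast\in\mathcal{C}^{+}$ means $a_{j_\ast}x^{j_\ast}$ is one of the \emph{lowest-order} monomials appearing in $f$: by the definition \eqref{C+}, no monomial $a_d x^d$ with $d$ coordinatewise below $j_\ast$ occurs. First I would split the sum $\sum_{j\in\mathcal{C}}a_j x^j$ into the single term $a_{j_\ast}x^{j_\ast}$ plus the remainder $R(x):=\sum_{j\in\mathcal{C}\setminus\{j_\ast\}}a_j x^j$. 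Since $f$ satisfies \eqref{property_f}, we have $-f(x)=\sum_j a_j x^j>0$ on the punctured cube, and one needs $a_{j_\ast}>0$ — this follows because $j_\ast\in\mathcal{C}^{+}$ forces $a_{j_\ast}x^{j_\ast}$ to dominate $f(x)$ along the coordinate ray where only $x_{i}$ with $(j_\ast)_i\neq 0$ are nonzero (or along a suitable curve), and positivity of $-f$ near $0$ then pins down the sign of $a_{j_\ast}$.

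Next I would show there is a constant $C\geq 1$ with $a_{j_\ast}x^{j_\ast}\leq -f(x)=\sum_j a_j x^j\leq C\, a_{j_\ast}x^{j_\ast}$ for all $x\in[0,\varepsilon]^N$ after possibly shrinking $\varepsilon$. The left inequality is delicate because individual terms $a_j x^j$ may be negative; however, near the origin the total $-f(x)$ is positive, and for the comparison I would instead argue as in the proof of Theorem~\ref{thm:1-dim_analytic}: the ratio $\frac{-f(x)}{a_{j_\ast}x^{j_\ast}}$ need not tend to $1$ in several variables, but it stays bounded above because every other monomial $a_j x^j$ with $j\neq j_\ast$ either has $j$ not coordinatewise above $j_\ast$ — in which case, after excluding such $d$ with $a_d=0$ by the $\mathcal{C}^+$ condition, the relevant comparison is handled by absorbing lower-order-in-some-variable terms — or $j$ \emph{is} coordinatewise $\geq j_\ast$, so $x^j=x^{j_\ast}\cdot x^{j-j_\ast}$ and $|x^{j-j_\ast}|\leq \varepsilon^{|j-j_\ast|}$ is small. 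Making this bound uniform over the (possibly infinite) collection $\mathcal{C}$ requires using analyticity of $f$: the series $\sum_j a_j x^j$ converges absolutely and uniformly on a neighbourhood of $[0,\varepsilon]^N$ once $\varepsilon$ is small, so the tail is controlled. For the lower bound, I would note that on the region where all coordinates are comparable the positivity of $-f$ together with the absolute convergence gives $-f(x)\geq c\, a_{j_\ast}x^{j_\ast}$ for some $c>0$; a cleaner route is to use that $-f(x)-p\geq a_{j_\ast}x^{j_\ast}-p$ fails in general, so instead one only needs the \emph{upper} bound on the integrand, i.e. $-f(x)+(-p)\geq \tfrac1C\big(a_{j_\ast}x^{j_\ast}+(-p)\big)$, which follows from $-f(x)\geq \tfrac1C a_{j_\ast}x^{j_\ast}$; this single-sided estimate is all the lemma claims.

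Assembling: with $\tfrac1C a_{j_\ast}x^{j_\ast}\leq -f(x)$ on $[0,\varepsilon]^N$ we get
\begin{align*}
    \langle V_\infty g, g\rangle = \frac{\sigma^2}{2}\int_0^\varepsilon\!\!\cdots\!\!\int_0^\varepsilon \frac{1}{-f(x)-p}\,\text{d}x
    \leq \frac{\sigma^2}{2}\int_0^\varepsilon\!\!\cdots\!\!\int_0^\varepsilon \frac{C}{a_{j_\ast}x^{j_\ast}-p}\,\text{d}x,
\end{align*}
and since the constant $C\sigma^2/2$ does not affect the $\Theta$-class in the limit $p\to 0^-$, this is exactly $\Theta\!\left(\int_0^\varepsilon\cdots\int_0^\varepsilon \frac{1}{a_{j_\ast}x^{j_\ast}-p}\,\text{d}x\right)$. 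The main obstacle I anticipate is the uniform comparison $-f(x)\geq \tfrac1C a_{j_\ast}x^{j_\ast}$ on the whole cube: this is genuinely a statement that requires using the $\mathcal{C}^+$ minimality of $j_\ast$ (so that no strictly-lower monomial can overwhelm it) \emph{and} analyticity (so the infinitely many higher-order terms are uniformly small after shrinking $\varepsilon$), and care is needed because mixed monomials can be negative and are not individually dominated by $x^{j_\ast}$ pointwise on all of $[0,\varepsilon]^N$ — only the positivity of the full sum $-f$ saves this, which is precisely what \eqref{property_f} guarantees.
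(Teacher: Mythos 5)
Your proposal follows essentially the same route as the paper: establish the pointwise comparison $-f(x)\geq \tfrac{1}{C}\,a_{j_\ast}x^{j_\ast}$ on the cube (using that $a_j>0$ for $j\in\mathcal{C}^{+}$, forced by the sign condition in \eqref{property_f}), integrate, and absorb the constant into the $\Theta$-class. The "main obstacle" you flag — the uniform lower bound on $-f$ by the minimal monomial despite possibly negative mixed terms — is precisely the step the paper also asserts without further detail (via the intermediate inequality $-f(x)\geq C\sum_{j\in\mathcal{C}^{+}}a_jx^j$), so your treatment is at the same level of rigor and no more.
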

\begin{proof}
Since we assume $f$ to be negative in $\mathcal{X}\setminus\{0\}$ and we consider the bounded domain $[0,\varepsilon]^N$, we know that $a_j>0$ for any $j\in\mathcal{C^+}$. In particular, we see that for a $1>C>0$, dependent on $\varepsilon$, and for any $x\in[0,\varepsilon]^N$
\begin{align*} 
    - f(x) = \sum_{j \in \mathcal{C}}a_j x^j 
    \geq C \sum_{j \in \mathcal{C}^{+}}a_j x^j 
    \geq C a_{j_{\ast}}x^{j_{\ast}}
 \end{align*}
for $j_{\ast} \in \mathcal{C}^{+}$. 
Hence, for the variance along $g$ we obtain
\begin{align} \label{eq:twodim_cov}
\langle V_{\infty}g, g \rangle 
= \frac{\sigma^2}{2} \mathlarger{\int}_{0}^\varepsilon \cdots\mathlarger{\int}_{0}^\varepsilon \frac{1}{\underset{j \in \mathcal{C}}{\sum} a_j x^{j} - p} \, \text{d}x_1 \dots \text{d}x_N
\leq \frac{\sigma^2}{2 C} \int_0^\varepsilon \cdots \int_0^\varepsilon \frac{1}{a_{j_\ast} x^{j_\ast} -p} \, \text{d}x_1 \dots \text{d}x_N \;.
 \end{align}
\end{proof}
Without loss of generality and for simplicity, we assume for the remainder of the section that $a_{j_{\ast}} = \frac{\sigma^2}{2} = 1$.

\begin{remark} \label{rmk:dim_reduction}
In case the multi-index $j_\ast$ has $N>k\in\mathbb{N}\cup\{0\}$ indices with value $0$, the analysis on the upper bound in \eqref{eq:twodim_cov} can be reduced to the case of $N-k$ spatial dimensions. In fact, assuming that $j_\ast=(i_1,\dots,i_{N-k},0,\dots,0)$, we obtain
\begin{align*}
    \int_0^\varepsilon \cdots \int_0^\varepsilon \frac{1}{x^{j_\ast}-p} \text{d}x_1\dots\text{d}x_N
    =\varepsilon^k \mathlarger{\mathlarger{\int}}_0^\varepsilon \cdots \mathlarger{\mathlarger{\int}}_0^\varepsilon \frac{1}{\overset{N-k}{\underset{n=1}{\prod}} x_n^{i_n}-p} \text{d}x_1\dots\text{d}x_{N-k}\;.
\end{align*}
We note that the case $N=k$ contradicts the assumption $f(x_\ast)=0$ and results in convergence of the upper bound as the bifurcation threshold is not reached for $p\to 0^-$.
\end{remark}
Due to Remark \ref{rmk:dim_reduction}, in the remaining subsections we consider $j_\ast$ with no elements equal to $0$.

\subsection{Two dimensions} \label{chap:dim2}

In this subsection, we find an upper bound for the scaling law of $\langle V_{\infty}g,g \rangle$ for $N=2$.  We therefore consider $j_\ast=(i_1,i_2)\in\mathbb{N}^2$. In the following theorem we outline how to obtain such upper bounds.

\begin{thm} \label{thm:2-dim_analytic}
    Set $f(x)= - \underset{j \in \mathcal{C}}{\sum}a_j x^j$ for all $x \in \mathcal{X} \subseteq \mathbb{R}^2$, that satisfies \eqref{property_f},  $\{a_j\}_{j\in\mathcal{C}}\subset\mathbb{R}$, $\varepsilon>0$ and $Q=\operatorname{I}$. Fix $j_\ast=(i_1,i_2)\in\mathcal{C}^+$ with $i_1,i_2>0$. Then there exists an upper bound to the time-asymptotic variance of the solution of \eqref{sist} along $g(x)=\mathbbm{1}_{[0,\varepsilon]^2}$, $\langle V_{\infty}g, g \rangle$, and its rate as $p\to 0^-$ is described in Table \ref{tab:twodim} in accordance to the value of $i_1$ and $i_2$.
\end{thm}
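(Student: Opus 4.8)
The plan is to reduce, via Lemma~\ref{lm:N-dim_up}, the estimation of $\langle V_{\infty}g,g\rangle$ to the study of the double integral
\begin{align*}
    I(p) := \int_0^\varepsilon \int_0^\varepsilon \frac{1}{x_1^{i_1} x_2^{i_2} - p}\, \textnormal{d}x_1\, \textnormal{d}x_2 \, ,
\end{align*}
where we may set $\varepsilon=1$ after rescaling and, by Remark~\ref{rmk:dim_reduction}, assume $i_1, i_2 \geq 1$. The first step is to perform the inner integration in $x_1$ exactly: for fixed $x_2$, substituting $y = x_1 (x_2^{i_2}/(-p))^{1/i_1}$ turns the inner integral into $(-p)^{-1+1/i_1} x_2^{-i_2/i_1}$ times $\int_0^{(x_2^{i_2}/(-p))^{1/i_1}} (y^{i_1}+1)^{-1}\,\textnormal{d}y$. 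This matches exactly the one-dimensional computation in \eqref{rate} from the proof of Theorem~\ref{thm:1-dim_tool}. Hence $I(p) = (-p)^{-1+1/i_1}\int_0^1 x_2^{-i_2/i_1}\, J\!\big(x_2 (-p)^{-1/i_1}\big)\,\textnormal{d}x_2$, where $J(R):=\int_0^{R^{i_2/i_1}}(y^{i_1}+1)^{-1}\,\textnormal{d}y$ is bounded when $i_1>1$ and grows like $\log R$ when $i_1=1$.

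The second step is to substitute $z = x_2 (-p)^{-1/i_2}$ in the outer integral, converting it into $(-p)^{(1-i_2)/i_2 + i_2/(i_1 i_2)}$ (up to bookkeeping of exponents) times $\int_0^{(-p)^{-1/i_2}} z^{-i_2/i_1} J(\cdot)\,\textnormal{d}z$, and then to analyze the convergence or divergence of the remaining integral in the three regimes that naturally arise: (i) both $i_1,i_2>1$; (ii) exactly one of them equals $1$; (iii) both equal $1$ (already essentially done in Proposition~\ref{conv_prop}). In regime (i) the remaining integral should converge as $p\to 0^-$, so $I(p)$ has a clean power-law rate; in regime (ii) and (iii) one picks up logarithmic corrections, and one must be careful whether the divergence of the integral's upper limit or the integrand near $z=0$ dominates — this is where the near-origin behavior of $z^{-i_2/i_1}$ competes with the $\log$ from $J$. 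The bookkeeping of exponents across the two substitutions, and matching the boundary cases so the $\Theta$-rates agree at the thresholds $i_1=1$ or $i_2=1$, is the main obstacle; it is purely a careful-calculation obstacle rather than a conceptual one.

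Finally, I would assemble the case distinctions into Table~\ref{tab:twodim}, reading off the exponent of $-p$ (and any $\log(-p)$ factor) in each regime, and note that since Lemma~\ref{lm:N-dim_up} only gives $\langle V_\infty g,g\rangle \leq \Theta(I(p))$, the conclusion is stated as an upper bound on the rate rather than an exact scaling law (in contrast to the one-dimensional Theorem~\ref{thm:1-dim_analytic}). A sanity check is that setting $i_2 \to \infty$ should recover the one-dimensional rate $(-p)^{-1+1/i_1}$ in the variable $x_1$ times a bounded factor, consistent with Remark~\ref{rmk:dim_reduction}, and setting $i_1=i_2=1$ should reproduce the bounded limit found in Proposition~\ref{conv_prop}.
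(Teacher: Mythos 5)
Your reduction is sound and it is a genuinely different route from the paper's. The paper rescales both variables symmetrically ($y_n = x_n q^{-1/(2i_n)}$ with $q=-p$), substitutes $z = (y_1^{i_1}y_2^{i_2}+1)^{-1}$ in the inner integral, and integrates by parts to split the bound into the two summands $\mathfrak{A}$ and $\mathfrak{B}$ of Table \ref{tab:twodim}, the second of which is then handled via Fubini and further substitutions. You instead integrate out $x_1$ exactly using the one-dimensional computation \eqref{rate} and are left with a single one-dimensional integral in $x_2$; after the substitution $z = x_2(-p)^{-1/i_2}$ the correct bookkeeping gives
\begin{align*}
I(p) \;=\; (-p)^{-1+\frac{1}{i_2}}\int_0^{(-p)^{-1/i_2}} z^{-i_2/i_1}\,J(z)\,\textnormal{d}z\,,
\end{align*}
and all four entries of the table follow from the large-$z$ behaviour of $z^{-i_2/i_1}J(z)$: it decays like $z^{-i_2/i_1}$ (integrable iff $i_2>i_1$, logarithmic divergence iff $i_2=i_1>1$), picks up a harmless $\log z$ when $i_1=1<i_2$, and behaves like $z^{-1}\log z$ when $i_1=i_2=1$, giving $\log^2(-p)$. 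This is arguably cleaner than the paper's $\mathfrak{A}+\mathfrak{B}$ decomposition, at the cost of losing the explicit two-summand structure recorded in Table \ref{tab:twodim}. Note that your displayed exponent $(-p)^{(1-i_2)/i_2+i_2/(i_1i_2)}$ for the outer substitution is not correct for general $i_1$ (the factor produced by that substitution alone is $(-p)^{1/i_2-1/i_1}$); since the whole theorem is the exponent, this "bookkeeping" must actually be carried out.

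Two of your checks are, however, genuinely wrong and one of them would corrupt the table if relied upon. First, the case $i_1=i_2=1$ is \emph{not} "essentially done in Proposition \ref{conv_prop}", and it does not reproduce a bounded limit. That proposition concerns a function $f$ containing the two separate monomials $x_1$ and $x_2$ (multi-indices $(1,0)$ and $(0,1)$), i.e.\ $f\sim -(x_1+x_2)$, for which the variance converges. Case $4$ of Theorem \ref{thm:2-dim_analytic} concerns the single multi-index $j_\ast=(1,1)$, i.e.\ the monomial $x_1x_2$, and $\int_0^1\int_0^1 (x_1x_2-p)^{-1}$ diverges like $\log^2(-p)$ — which is exactly what your own formula above yields and what Table \ref{tab:twodim} records. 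Deferring to Proposition \ref{conv_prop} here would give the wrong entry. Second, the limit $i_2\to\infty$ does not recover $(-p)^{-1+1/i_1}$: it sends $x_2^{i_2}\to 0$ and hence the bound to $\Theta((-p)^{-1})$, consistent with Example \ref{ex:zero}; the situation you describe (the $x_2$-dependence dropping out and the problem reducing to one dimension in $x_1$) is the case $i_2=0$ of Remark \ref{rmk:dim_reduction}, not $i_2\to\infty$.
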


\begin{table}[h!]
  \begin{center}
    \begin{tabular}{c|c|c|c}
      \specialrule{.1em}{.05em}{.05em}
      \multirow{2}{*}{\textbf{Case}} & \multicolumn{3}{c}{\textbf{Scaling law for $p \to 0^{-}$}}\\
      \cline{2-4}
      & \textbf{$\mathfrak{A}$}& \textbf{$\mathfrak{B}$}& \textbf{Upper bound} \\
      \thickhline
      $\mathbf{1: }\, i_2 > i_1 > 1$ & $(-p)^{-1 + \frac{1}{i_1}}$ & $(-p)^{-1 + \frac{1}{i_2}}$ & $(-p)^{-1 + \frac{1}{i_2}}$ \\
      $\mathbf{2: }\, i_2 > i_1 = 1$ &$-\log \left(-p\right) $ & $(-p)^{-1 + \frac{1}{i_2}}$ &$(-p)^{-1 + \frac{1}{i_2}}$ \\
      $\mathbf{3: }\, i_2 = i_1 > 1$ & $(-p)^{-1 + \frac{1}{i_1}}$ & $-(-p)^{-1 + \frac{1}{i_2}}\log (-p)$ & $-(-p)^{-1 + \frac{1}{i_2}}\log (-p)$ \\
      $\mathbf{4: }\, i_2 = i_1 = 1$ & $-\log \left(-p\right) $ & $\log^2(-p)$ & $\log^2(-p)$ \\
      \specialrule{.1em}{.05em}{.05em}
    \end{tabular}
    \caption{Upper bounds in dimension $N=2$ for different choices of indices $i_1, i_2$, ordered for simplicity. We indicate as $\mathfrak{A}$ and $\mathfrak{B}$ two summands whose sum gives the upper bound.}
    \label{tab:twodim}
  \end{center}
\end{table}

The proof of the theorem can be found in Appendix \ref{appendA}. Overall, we have obtained an upper bound for an analytic function $f$. In Table \ref{tab:twodim} we see that only the highest index $i_2$ affects directly the scaling law of the upper bound. However, its relation to the other index $i_1$ dictates the exact form. The limiting case for $i_1,i_2\to\infty$ has been discussed in Example \ref{ex:zero}. Lastly, we note that our bound may not be not sharp, as described in Proposition \ref{conv_prop} and as shown in the next example. 

\begin{example} \label{ex:log_div} We consider the function $f$ such that it satisfies \eqref{property_f}, $[0,1]^2\subseteq\mathcal{X}$ and there exists $C>0$ for which
\begin{align*}
    C^{-1} \left( - x_1^2-x_2^2 \right) \leq f(x) \leq C \left( - x_1^2-x_2^2 \right) \, ,
\end{align*}
for $0\leq x_1,x_2 \leq 1$. This means that for the covariance operator we obtain
\begin{align*} 
    \langle V_{\infty}g, g \rangle &\leq C^{-1} \int_{0}^1\int_0^1 \frac{1}{x_1^2+x_2^2-p} \, \text{d}x_1 \, \text{d}x_2   \leq C^{-1} \int \int_{D} \frac{1}{x_1^2 + x_2^2 -p} \, \text{d}x_1 \, \text{d}x_2 \quad,
 \end{align*}
where $D$ denotes the circle of radius $\sqrt{2} $ centered at the origin. We study then the integral in polar coordinates and obtain
\begin{align*} 
    \int \int_{D} \frac{1}{x_1^2 + x_2^2 -p} \, \text{d}x_1 \, \text{d}x_2  = \int_0^{2 \pi} \int_{0}^{\sqrt{2} } \frac{r}{r^2 - p} \, \text{d}r \, \text{d} \theta \, . 
 \end{align*}
Substituting $r^{\prime} = r^2 - p$ we get
\begin{align*} 
    &\int_0^{2 \pi} \int_{0}^{\sqrt{2} } \frac{r}{r^2 - p} \, \text{d}r \, \text{d} \theta 
    = \frac{1}{2} \int_0^{2 \pi} \int_{-p}^{2 -p} \frac{1}{r^{\prime}} \, \text{d}r^{\prime} \, \text{d} \theta
    = \pi \Big( \log(2 -p) - \log(-p) \Big)
    =\Theta \Big(- \log \left(-p \right) \Big)\, . 
 \end{align*}
A lower bound can be easily obtained through 
\begin{align*} 
    \langle V_{\infty}g, g \rangle &\geq C \int_{0}^1\int_0^1 \frac{1}{x_1^2+x_2^2-p} \, \text{d}x_1 \, \text{d}x_2   
    \geq C \int \int_{\tilde{D}} \frac{1}{x_1^2 + x_2^2 -p} \, \text{d}x_1 \, \text{d}x_2
    =\Theta \Big(- \log \left(-p \right) \Big) \quad,
 \end{align*}
 for $\tilde{D}=\left\{(x,y) \;\big|\; x^2+y^2\leq 1 \;,\; x,y>0\right\}$.
Hence we find divergence of $\langle g,V_\infty g\rangle$, for $p$ approaching zero from below, of rate $\Theta \Big(- \log \left(-p \right) \Big)$, whereas the upper bounds in Theorem \ref{thm:2-dim_analytic} assume scaling law $\Theta\left((-p)^{-\frac{1}{2}} \right)$.
\end{example}

\subsection{Three dimensions} \label{chap:dim3}

We now assume $N=3$ and therefore $f:\mathcal{X}\subseteq\mathbb{R}^3\to\mathbb{R}$ and $g=\mathbbm{1}_{[0,\varepsilon]^3}$. The following theorem provides an upper bound for the scaling law of the corresponding early-warning sign $\langle g, V_\infty g \rangle$.

\begin{thm} \label{thm:3-dim_analytic}
    Set $f(x)= - \underset{j \in \mathcal{C}}{\sum} a_j x^j$ for all $x \in \mathcal{X} \subseteq \mathbb{R}^3$, that satisfies \eqref{property_f},  $\{a_j\}_{j\in\mathcal{C}}\subset\mathbb{R}$, $\varepsilon>0$ and $Q=\operatorname{I}$. Fix $j_\ast=(i_1,i_2,i_3)\in\mathcal{C}^+$ with $i_1,i_2, i_3>0$. Then there exists an upper bound to the time-asymptotic variance of the solution of \eqref{sist} along $g(x)=\mathbbm{1}_{[0,\varepsilon]^3}$, $\langle V_{\infty}g, g \rangle$, and it has a scaling law bound as $p\to 0^-$ described in Table \ref{tab:threedim} depending upon the values of $i_1$, $i_2$ and $i_3$.
\end{thm}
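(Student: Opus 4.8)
The plan is to reduce the statement, via Lemma~\ref{lm:N-dim_up} and the normalisation $a_{j_\ast}=\tfrac{\sigma^2}{2}=1$ fixed afterwards, to an upper bound for the pure monomial integral
\[
J(p):=\int_0^\varepsilon\!\int_0^\varepsilon\!\int_0^\varepsilon \frac{1}{x_1^{i_1}x_2^{i_2}x_3^{i_3}-p}\,\text{d}x_1\,\text{d}x_2\,\text{d}x_3 ,
\]
with $i_1,i_2,i_3\ge1$ (all non-zero, by Remark~\ref{rmk:dim_reduction}). Since $[0,\varepsilon]^3$ is invariant under permutations of the coordinates, we may relabel the variables so that $i_1\le i_2\le i_3$; writing $q=-p>0$ we must determine the growth of $J$ as $q\to0^+$.

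The approach is to carry out the iterated substitution of the proofs of Theorem~\ref{thm:1-dim_tool} and Theorem~\ref{thm:2-dim_analytic} once more, integrating first the variable $x_3$ carrying the largest exponent. For fixed $x_1,x_2$, set $b=x_1^{i_1}x_2^{i_2}>0$; the substitution $y=x_3\,(b/q)^{1/i_3}$ gives
\[
\int_0^\varepsilon\frac{1}{b\,x_3^{i_3}+q}\,\text{d}x_3 = b^{-1/i_3}q^{-1+1/i_3}\int_0^{\varepsilon(b/q)^{1/i_3}}\frac{1}{y^{i_3}+1}\,\text{d}y .
\]
When $i_3>1$ the tail integral $\int_0^\infty(y^{i_3}+1)^{-1}\,\text{d}y$ is finite; combining this with the trivial estimate $\le\varepsilon/q$ for the left-hand side, and using $\min\{s,\Lambda\}\le 2(s^{-1}+\Lambda^{-1})^{-1}$, one obtains a bound that is \emph{uniform} in $(x_1,x_2)$ of the form $\lesssim q^{-1+1/i_3}\bigl(x_1^{i_1/i_3}x_2^{i_2/i_3}+\varepsilon^{-1}q^{1/i_3}\bigr)^{-1}$. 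Integrating in $x_1,x_2$ therefore yields
\[
J(p)\;\lesssim\; q^{-1+1/i_3}\int_0^\varepsilon\!\int_0^\varepsilon\frac{1}{x_1^{\alpha_1}x_2^{\alpha_2}+\tilde q}\,\text{d}x_1\,\text{d}x_2 ,\qquad \alpha_n:=\tfrac{i_n}{i_3}\in(0,1],\quad \tilde q:=\varepsilon^{-1}q^{1/i_3}\to0^+ ,
\]
which is again a two-dimensional integral of the same family, now with the real exponents $\alpha_1\le\alpha_2\le1$.

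It remains to evaluate this two-dimensional integral, which is short precisely because $\alpha_1,\alpha_2\le1$: if $\alpha_2<1$ the integrand is dominated by the $\tilde q$-free, integrable function $x_1^{-\alpha_1}x_2^{-\alpha_2}$, so the integral is $\Theta(1)$; if $\alpha_2=1>\alpha_1$, one integration in $x_2$ produces $\log(1+\varepsilon x_1^{\alpha_1}/\tilde q)/x_1^{\alpha_1}$, whose integral against $x_1\in[0,\varepsilon]$ is $\Theta(-\log\tilde q)$ (split at $x_1^{\alpha_1}\asymp\tilde q$ and use $\alpha_1<1$); if $\alpha_1=\alpha_2=1$, successive substitution reduces it to $\int_0^{\varepsilon^2/\tilde q}u^{-1}\log(1+u)\,\text{d}u=\Theta(\log^2\tilde q)$. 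Since $\alpha_2=1\iff i_2=i_3$, $\alpha_1=1\iff i_1=i_2=i_3$, and $-\log\tilde q=\Theta(-\log q)$, feeding these back gives, for $i_3>1$, the bounds $(-p)^{-1+1/i_3}$, $-(-p)^{-1+1/i_3}\log(-p)$ and $(-p)^{-1+1/i_3}\log^2(-p)$ according to whether $i_2<i_3$, $i_1<i_2=i_3$ or $i_1=i_2=i_3$; these are the entries of Table~\ref{tab:threedim}. The remaining case $i_3=1$ forces $i_1=i_2=i_3=1$, and the reduction above fails since $\int_0^\infty(y+1)^{-1}\,\text{d}y=\infty$; here one integrates all three variables exactly ($\int_0^\varepsilon(bx_3+q)^{-1}\text{d}x_3=b^{-1}\log(1+\varepsilon b/q)$, then two more substitutions of the same kind), producing expressions of the type $\int_1^{c/q}t^{-1}\log^2t\,\text{d}t$ and hence the rate $\Theta(|\log(-p)|^3)$, i.e. the last line of Table~\ref{tab:threedim}. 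Combining with Lemma~\ref{lm:N-dim_up} gives the asserted upper bound for $\langle V_\infty g,g\rangle$.

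The main obstacle is the bookkeeping. One must extract the estimate on the $x_3$-integral uniformly in $(x_1,x_2)$ so that it survives the remaining integration; one must locate the successive crossover scales ($b\asymp q$, then $x_1^{\alpha_1}\asymp\tilde q$, and so on) and patch the domain accordingly; and, most delicately, one has to keep exact track of how a logarithmic factor is gained each time two — and then all three — of the exponents $i_n$ coincide, together with the separate treatment of $i_3=1$. Each individual computation is elementary, but the number of regimes is what makes the proof long; this also explains why the conclusion is phrased as a table of cases and why, as Example~\ref{ex:log_div} and Proposition~\ref{conv_prop} show, the bound obtained need not be sharp.
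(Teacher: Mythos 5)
Your proposal is correct, and it reaches every entry of Table \ref{tab:threedim}, but it does so by a genuinely different route than the paper. The paper performs one global rescaling $y_n=x_n q^{-1/(3i_n)}$, substitutes $z=(y_1^{i_1}y_2^{i_2}y_3^{i_3}+1)^{-1}$ in the innermost integral and integrates by parts, splitting the bound into a two-dimensional piece $\mathfrak{C}$ (handled by the machinery of Theorem \ref{thm:2-dim_analytic}) plus a residual triple integral $\mathfrak{D}$, which is then evaluated in eight separate cases via repeated Fubini swaps and further decompositions $\mathfrak{D}.\text{\RomanNumeralCaps{1}},\mathfrak{D}.\text{\RomanNumeralCaps{2}},\dots$. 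You instead integrate out the variable carrying the largest exponent first and convert the result into a bound that is uniform in $(x_1,x_2)$ — the step $\min\{s,\Lambda\}\le 2(s^{-1}+\Lambda^{-1})^{-1}$ applied to the two competing estimates $C_{i_3}b^{-1/i_3}q^{-1+1/i_3}$ and $\varepsilon q^{-1}$ is valid and is the key idea — thereby reducing the three-dimensional problem to a two-dimensional one of the same family with exponents $\alpha_n=i_n/i_3\le1$; the eight cases of the table then collapse to three short sub-cases according to how many of the $\alpha_n$ equal $1$, plus the separate treatment of $i_1=i_2=i_3=1$. This dimension-reduction structure is cleaner, makes transparent why one logarithm is gained per tied maximal exponent, and would extend by induction to $N>3$, which the paper's conclusion explicitly lists as open. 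What you give up is precision: the paper determines the exact $\Theta$-asymptotics of the monomial integral $\int_0^\varepsilon\!\int_0^\varepsilon\!\int_0^\varepsilon(x^{j_\ast}-p)^{-1}\,\text{d}x$, whereas your harmonic-mean step only yields a one-sided $\lesssim$ bound on it. For the theorem as stated — the existence of an upper bound for $\langle V_\infty g,g\rangle$ with the tabulated scaling — this is sufficient, but you should state explicitly that you are not claiming the two-sided rate of the comparison integral itself. If you write this up, make the dependence of the implied constants explicit (they depend on $i_1,i_2,i_3,\varepsilon$ and the constant $C$ from Lemma \ref{lm:N-dim_up}, but not on $q$), and carry out at least one of the crossover-splitting computations, e.g. the case $\alpha_2=1>\alpha_1$, in full rather than in outline.
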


\begin{table}[h!]
  \begin{center}
    \begin{tabular}{c|c|c|c}
      \specialrule{.1em}{.05em}{.05em}
      \multirow{2}{*}{\textbf{Case}} & \multicolumn{3}{c}{\textbf{Scaling law for $p \to 0^{-}$}}\\
      \cline{2-4}
      &\textbf{$\mathfrak{C}$}& \textbf{$\mathfrak{D}$}& \textbf{Upper bound} \\
      \thickhline
      $\mathbf{1:}\, i_3 > i_2 > i_1 > 1$ & $(-p)^{-1+ \frac{1}{i_2}} $ & $(-p)^{-1 + \frac{1}{i_3}} $ & $(-p)^{-1 + \frac{1}{i_3}}$ \\ 
      $\mathbf{2:}\, i_3 > i_2 = i_1 > 1$ & $-(-p)^{-1+\frac{1}{i_2}} \log(-p)$ & $(-p)^{-1+ \frac{1}{i_3}}$ &  $(-p)^{-1+ \frac{1}{i_3}}$ \\ 
      $\mathbf{3:}\, i_3 = i_2 > i_1 > 1$ & $(-p)^{-1+ \frac{1}{i_2}} $ & $-(-p)^{-1 + \frac{1}{i_3}} \log(-p)$ & $-(-p)^{-1 + \frac{1}{i_3}} \log(-p)$ \\
      $\mathbf{4:}\, i_3 = i_2 = i_1 > 1$ & $-(-p)^{-1+\frac{1}{i_2}} \log(-p)$ & $(-p)^{-1+ \frac{1}{i_3}}\log^2(-p)$ & $(-p)^{-1+ \frac{1}{i_3}}\log^2(-p)$\\
      $\mathbf{5:}\, i_3 > i_2 > i_1 = 1$ & $(-p)^{-1+ \frac{1}{i_2}}$ & $(-p)^{-1+\frac{1}{i_3}}$ & $(-p)^{-1+ \frac{1}{i_3}} $\\ 
      $\mathbf{6:}\, i_3 > i_2 = i_1 = 1$ & $\log^2 (-p)$ & $(-p)^{-1+ \frac{1}{i_3}}$ & $(-p)^{-1+ \frac{1}{i_3}}$ \\
      $\mathbf{7:}\, i_3 = i_2 > i_1 = 1$ & $(-p)^{-1+ \frac{1}{i_2}}$ & $-(-p)^{-1+ \frac{1}{i_3}} \log(-p)$ & $-(-p)^{-1+ \frac{1}{i_3}} \log(-p)$ \\
      $\mathbf{8:}\, i_3 = i_2 = i_1 = 1$ & $\log^2 (-p)$ & $- \log^3(-p)$ & $- \log^3(-p)$ \\
      \specialrule{.1em}{.05em}{.05em}
    \end{tabular}
    \caption{Upper bounds in dimension $n=3$ for different choices of indices $i_1, i_2$ and $i_3$, ordered for simplicity.  We denote as $\mathfrak{C}$ and $\mathfrak{D}$ two values whose sum is the upper bound.}
    \label{tab:threedim}
  \end{center}
\end{table}

The theorem is proven in Appendix \ref{appendB}. We observe that if $i_3$ is strictly greater than the other indices, $i_1$ and $i_2$, we find divergence with the upper bound $\Theta\left((-p)^{-1 + \frac{1}{i_3}}\right)$. If the highest index value is given by two of the indices, i.e., $i_3=i_2>i_1$, then the upper bound we obtain for the scaling is $\Theta\left(-(-p)^{-1 + \frac{1}{i_3}}\log(-p)\right)$. We see also that in the case all three indices are equal but greater than $1$, i.e., $i_3=i_2=i_1>1$, then the upper bound is $\Theta\left((-p)^{-1 + \frac{1}{i_3}}\log^2(-p)\right)$. Lastly, setting all the indices equal to $1$, then the described rate is $\Theta\left(-\log^3(-p)\right)$. In Example \ref{ex:zero} the limit case $i_1,i_2,i_3\to \infty$ is covered.

\section{Generalizations and applications} \label{sec:gen}

We now generalize and discuss the main results obtained above with a focus on the impact of relaxing the hypotheses in Section \ref{sec:preliminaries}. In particular, one may obtain Theorem \ref{thm:1-dim_analytic}, Theorem \ref{thm:2-dim_analytic} and Theorem \ref{thm:3-dim_analytic} under slightly more general choices of functions $f, g$ and operator $Q$. We also study the theorems for $\operatorname{T}_f$ with complex spectrum. Lastly, we discuss the case in which several concrete linear operators, $A$, with continuous spectrum are encountered in the drift term of \eqref{sist}. 


\begin{itemize}[label=$\blacktriangleright$]
\item
The condition of uniqueness of $x_\ast$ such that $f(x_\ast)=0$ is required only in a local sense. In fact, we could assume the existence of $\mathcal{Z}\subset\mathcal{X}$ for which any $x\in\mathcal{Z}$ satisfies $f(x)=0$ and $\text{dist} (x_\ast, x) > 0$. Such choice would trivially leave unchanged the scaling laws described in Theorem \ref{thm:1-dim_analytic}, Theorem \ref{thm:2-dim_analytic} and Theorem \ref{thm:3-dim_analytic}, for functions $g$ such that the support of $g$ has empty intersection with $\mathcal{Z}$.\\
The analytic behaviour of the non-positive function $f$ presented in Theorem \ref{thm:1-dim_analytic}, Theorem \ref{thm:2-dim_analytic} and Theorem \ref{thm:3-dim_analytic} can be assumed only in a neighbourhood of the roots of the function, while $f$ satsfies the integrability property in \eqref{property_f} and the sign properties previously described.

\item
In Theorem \ref{thm:1-dim_analytic}, Theorem \ref{thm:2-dim_analytic} and Theorem \ref{thm:3-dim_analytic}, the support of the function $g$ is assumed to be $[0,\varepsilon]^N$ and $g$ to be an indicator function, yet the scaling laws in Table \ref{tab:onedim}, Table \ref{tab:twodim} and Table \ref{tab:threedim} are applicable for a more general choice.\\ 
First, similarly to Remark \ref{rmk:false_mirror}, the upper bounds described in Theorem \ref{thm:2-dim_analytic} and Theorem \ref{thm:3-dim_analytic} can be shown to hold with $g=\mathbbm{1}_{[-\varepsilon,\varepsilon]^N}$, in the sense that the hypercube $[-\varepsilon,\varepsilon]^N\subseteq\mathcal{X}$ that describes the support of $g$ can be split in $2^N$ hypercubes on which such statements have been proven, up to reparametrization and rescaling of the spatial variable $x$, and of which only the highest rate of divergence dictates the order assumed by the upper bound of $\langle g, V_\infty g\rangle$.\\
The shape of the support can also be generalized. Set $g=\mathbbm{1}_{\mathcal{S}}$ for $[-\varepsilon,\varepsilon]^N\subseteq\mathcal{S}\subseteq\mathcal{X}$, then the integral
\begin{align*}
    \langle V_{\infty} g, g \rangle
    = \frac{\sigma^2}{2} \int_{\mathcal{S}} \frac{-1}{f(x)+p} \, \text{d}x 
    = \frac{\sigma^2}{2} \int_{\mathcal{S}\setminus[-\varepsilon,\varepsilon]^N} \frac{-1}{f(x)+p} \, \text{d}x 
    + \frac{\sigma^2}{2} \int_{[-\varepsilon,\varepsilon]^N} \frac{-1}{f(x)+p} \, \text{d}x 
\end{align*}
assumes equivalent rate of the divergence to the second summand in the right-hand side of the equation. That is implied by the fact that the first summand converges as $p\to 0^-$ by construction.\\
The scaling law is also unchanged with $g\in L^2(\mathcal{X})$ such that $g$ and $g^{-1}$ are bounded from above in a neighbourhood of $x_\ast$ as, for any $p<0$,
\begin{align*}
    C^{-1} \frac{\sigma^2}{2} \int_{\mathcal{S}} \frac{-1}{f(x)+p} \, \text{d}x
    \leq \langle V_{\infty} g, g \rangle
    \leq C \frac{\sigma^2}{2} \int_{\mathcal{S}} \frac{-1}{f(x)+p} \, \text{d}x \; ,
\end{align*}
for a constant $C>1$ depending on $g$ and on the choice of $\mathcal{S}$.\\
Lastly, assume $g_1,g_2\in L^2(\mathcal{X})$ which are continuous in a neighbourhood $\mathcal{S}$ of $x_\ast$ and such that $g_1,g_2,g_1^{-1}$ and $g_2^{-1}$ are bounded from above in $\mathcal{S}$. We note that such set of functions is dense in $L^2(\mathcal{X})$. For simplicity set $g_1(x_\ast) g_2(x_\ast)>0$. The scaling law of the scalar product in \eqref{eq:cov_integral} can be obtained through
\begin{align*}
    C^{-1} \frac{\sigma^2}{2} \int_{\mathcal{S}} \frac{-1}{f(x)+p} \, \text{d}x
    \leq \langle V_{\infty} g_1, g_2 \rangle
    \leq C \frac{\sigma^2}{2} \int_{\mathcal{S}} \frac{-1}{f(x)+p} \, \text{d}x \; ,
\end{align*}
for $C>1$ and any $p$ close to $0$, thus enabling a study of the covariance operator $V_\infty$. 

\item
A third generalization of Theorem \ref{thm:1-dim_analytic}, Theorem \ref{thm:2-dim_analytic} and Theorem \ref{thm:3-dim_analytic} is given by relaxing the assumptions for $Q$. We can assume $Q$ and its inverse on $L^2(\mathcal{X})$, $Q^{-1}$, to have bounded eigenvalues from above. The covariance operator (\cite[Theorem 5.2]{DP}) takes the form
\begin{align*}
    V_{\infty} = \sigma^{2} \int_{0}^{\infty} e^{\tau (\operatorname{T}_f+p)} Q e^{\tau (\operatorname{T}_f+p)}\, \text{d}\tau \;.
\end{align*}
Hence we get 
\begin{align*} 
    \langle V_{\infty} g, g \rangle 
    &= \sigma^2 \int_{0}^{\infty} \langle \sqrt{Q}e^{t(\operatorname{T}_f+p)}g, \sqrt{Q}e^{t (\operatorname{T}_f+p)}g \rangle \, \text{d}t 
    = \sigma^2 \int_{0}^{\infty}  \lVert \sqrt{Q}e^{t(\operatorname{T}_f+p)}g \rVert^{2} \, \text{d}t \;, 
 \end{align*}
for any $g\in L^2(\mathcal{X})$. Since we have assumed that $Q$ and its inverse have bounded eigenvalues from above, the scalar product is also bounded from below and above as we have
\begin{align} \label{Q_gen}
    \inf_{n}{q_n \sigma^2 \int_0^{\infty} \lVert e^{(\operatorname{T}_f+p)t}g \rVert^{2} \, \text{d}t} \leq \sigma^2 \int_{0}^{\infty}  \lVert \sqrt{Q}e^{(\operatorname{T}_f+p)t}g \rVert^{2} \, \text{d}t \leq \sup_n{q_n \sigma^2 \int_0^{\infty} \lVert e^{(\operatorname{T}_f+p)t}g \rVert^{2} \, \text{d}t} \, ,
 \end{align}
 with $\{q_n\}_{n\in\mathbb{N}}$ the eigenvalues of $Q$. This implies that we have found an upper and lower bound for $\langle V_{\infty}g, g \rangle$ whose scaling law is controlled in accordance with the results in Theorem \ref{thm:1-dim_analytic}, Theorem \ref{thm:2-dim_analytic} and Theorem \ref{thm:3-dim_analytic}. \\
Assuming instead $Q$ to be bounded, the validity of the rates of the upper bound in Theorem \ref{thm:2-dim_analytic} and Theorem \ref{thm:3-dim_analytic} is maintained as the second inequality in \eqref{Q_gen} holds.

\item
We now consider the case in which the spectrum of $A = \operatorname{T}_f$ has complex values, i.e., we choose a function $f : \mathcal{X} \subseteq \mathbb{R}^{N} \to \mathbb{C}$ and the solution of \eqref{sist} as $u\in L^2(\mathcal{X};\mathbb{C)}$. We suppose also that $\text{Re}(f)$ satisfies \eqref{property_f}. The scalar product of the covariance operator takes then the form 
\begin{align*} 
    \langle V_{\infty}g, g \rangle 
    &= \int_{\mathcal{X}} \overline{g(x)} \int_{0}^{\infty} e^{(\overline{f(x)} + p) t} e^{(f(x) + p) t} g(x) ~\textnormal{d}t ~\textnormal{d}x 
    = -\int_{\mathcal{X}} \lvert g(x) \rvert^2 \frac{\sigma^2}{2(\text{Re}(f(x))+p)}\, \text{d}x \, .
 \end{align*}
for any $g\in L^2(\mathcal{X};\mathbb{C})$. Assuming that $\text{Re}(f(x))$ is analytic, we conclude that the results for the real spectrum achieved for the one-, two- and three-dimensional case hold true applying such theorems to the function $\text{Re}(f(x))$.

\item
Lastly we study the system
\begin{align} \label{new_sist}
    \begin{cases}
    \text{d}u(x,t) = \left(A+p\right)\; u(x,t) \, \text{d}t + \sigma \text{d}W(t)\\
    u(\cdot,0) = u_0
    \end{cases}
\end{align}
for $A$ a linear self-adjoint operator in $L^2(\mathcal{X})$ with non-positive spectrum. The spectral theorem \cite[Theorem 10.10]{hall2013quantum} implies the existence of a $\sigma$-finite measure space $(\mathcal{X}_0,\mu)$, a measurable function $\tilde{f}:\mathcal{X}_0\to \mathbb{R}$ and a unitary map $U:L^2(\mathcal{X})\to L^2(\mathcal{X}_0,\mu)$ such that
\begin{equation*}
    U(L^2(\mathcal{X}))=\left\{ \tilde{g}\in L^2(\mathcal{X}_0,\mu) \; | \; \tilde{f}\tilde{g}\in L^2(\mathcal{X}_0,\mu) \right\}
\end{equation*}
and 
\begin{equation*}
    U A U^{-1}=\operatorname{T}_{\tilde{f}} \;,
\end{equation*}
for $\operatorname{T}_{\tilde{f}}: L^2(\mathcal{X}_0,\mu) \to L^2(\mathcal{X}_0,\mu)$ a multiplication operator for $\tilde{f}$. Assuming $e^{A+p}$ to have finite trace, the time-asymptotic covariance operator for the solution of \eqref{new_sist} is
\begin{align*} 
    \langle V_{\infty} g, g \rangle &= \sigma^2 \int_{0}^{\infty} \langle  e^{2 t (A+p)} g , g\rangle \, \text{d}t \;,
 \end{align*}
 for $Q=\operatorname{I}$ and $g\in L^2(\mathcal{X})$. Labeling $\tilde{g}=U g$ we obtain through Fubini's Theorem that
\begin{align*} 
    \langle V_{\infty} g, g \rangle 
    &= \sigma^2 \int_{0}^{\infty} \langle U^{-1} e^{2 t (\operatorname{T}_{\tilde{f}}+p)} U g , g\rangle \, \text{d}t 
    = \sigma^2 \int_{0}^{\infty} \langle e^{2 t (\operatorname{T}_{\tilde{f}}+p)} \tilde{g} , \tilde{g} \rangle_{L^2(\mathcal{X}_0,\mu)} \, \text{d}t \\
    &= \int_{\mathcal{X}_0} \tilde{g}(x) \int_{0}^{\infty} e^{2 t (\tilde{f}(x) + p)} \tilde{g}(x)~ \textnormal{d}t~ \textnormal{d}\mu(x) 
    = - \int_{\mathcal{X}_0} \tilde{g}(x)^2 \frac{\sigma^2}{2(\tilde{f}(x)+p)}\, \text{d}\mu(x) \, .
 \end{align*}
 The operators $A$ and $\operatorname{T}_{\tilde{f}}$ have the same spectrum. We can assume that there exists an interval $(-\delta,0]$ in the continuous spectrum of $A$, which yields that $\mu(\{\tilde{f}=0\})=0$ and that there exists at least a point $x_\ast\in\mathcal{X}_0$ such that $\tilde{f}(x_\ast)=0$. From the Stone-Weierstrass theorem we know that for $g$ in a dense subset of $L^2(\mathcal{X})$ the associated $\tilde{g}\in L^2(\mathcal{X}_0,\mu)$ assumes bounded and non-null values $\mu$-a.e. in a neighbourhood of each root $x_\ast$. The scaling law for the case $N=1$ can be studied similarly to Remark \ref{rmk:general_g} given insights about the measure $\mu$ and assuming $\tilde{f}$ analytic (\cite{hall2013quantum}). Such assumption is not restrictive for $A$ bounded since such condition implies that $\tilde{f}$ is bounded \cite[Theorem 7.20]{hall2013quantum}. In fact, fix $x_\ast$, a neighbourhood $\mathcal{X}_1\subseteq\mathcal{X}_0$ and define
 \begin{align*}
     S_p&=\{\tilde{f}:\mathcal{X}_1\to\mathbb{R} \;|\; \tilde{f}\;\text{is polynomial}\;,\;\tilde{f}(x_\ast)=0\}\;,\\
     S_a&=\{\tilde{f}:\mathcal{X}_1\to\mathbb{R} \;|\; \tilde{f}\;\text{is analytic}\;,\;\tilde{f}(x_\ast)=0\}\;,\\
     S_c&=\{\tilde{f}:\mathcal{X}_1\to\mathbb{R} \;|\; \tilde{f}\;\text{is continuous}\;,\;\tilde{f}(x_\ast)=0\}\;,\\
     S_b&=\{\tilde{f}:\mathcal{X}_1\to\mathbb{R} \;|\; \tilde{f}\;\text{is bounded}\;,\;\tilde{f}(x_\ast)=0\}\;,\\
     S_l&=\{\tilde{f}:\mathcal{X}_1\to\mathbb{R} \;|\; \tilde{f}\in L^2(\mathcal{X}_1)\;,\;\tilde{f}(x_\ast)=0\}\;.
 \end{align*}
  Stone-Weierstrass Theorem states that $S_p$, and thus $S_a$, is dense in $S_c$. Also, $S_c$ is dense in $S_l$, and therefore in $S_b$. Hence elements in $S_a\cap\{\tilde{f}:\mathcal{X}_1\to\mathbb{R} \;|\; \tilde{f}(x)<0 \;\text{for}\;x\neq x_\ast\}$ can approximate functions in $S_b\cap\{\tilde{f}:\mathcal{X}_1\to\mathbb{R} \;|\; \tilde{f}(x)<0 \;\text{for}\;x\neq x_\ast\}$. Such sets of functions can be treated due to \cite[Proposition 7.21]{hall2013quantum}.
  \end{itemize}

  \begin{example}
 Fix a non-positive function $f:\mathbb{R}\to \mathbb{R}$ that satisfies \eqref{property_f}. Consider the linear operator $A:\mathcal{D}(A)\to L^2(\mathbb{R})$ such that for any $g\in \mathcal{D}(A)$ it holds
  \begin{align*}
    A g(x)=f\ast g(x) \quad,\quad \mathcal{D}(A):=\left\{ g\;\;\text{Lebesgue measurable}\;\Big|\; f \ast g\in L^2(\mathcal{X}) \right\}\quad,
  \end{align*}
  where $\ast$ denotes convolution. We want to study the variance of the solution of the system \eqref{new_sist} for $Q$ bounded with bounded inverse and $u_0\in\mathcal{D}(A)$.  We define the Fourier transform $\mathcal{F}:L^2(\mathbb{R})\to L^2(\mathbb{R})$, which is a unitary map. Assume $g\in L^2(\mathbb{R})$, then
  \begin{align*}
      \langle g, V_\infty g \rangle 
      &= \sigma^2 \int_{0}^{\infty} \langle  e^{t (A+p)}Qe^{t (A+p)}g , g\rangle \, \text{d}t
      = \sigma^2 \int_{0}^{\infty}  \lVert \sqrt{Q}e^{t(A+p)}g \rVert^{2} \, \text{d}t
      = \Theta \left( \int_{0}^{\infty}  \lVert e^{t(A+p)}g \rVert^{2} \, \text{d}t \right)\\
      &= \Theta \left( \int_{0}^{\infty} \lVert \mathcal{F}^{-1} e^{t(\operatorname{T}_f+p)} \mathcal{F} g \rVert^{2} \, \text{d}t \right)
      = \Theta \left( \int_{0}^{\infty} \lVert e^{t(\operatorname{T}_f+p)} \mathcal{F} g \rVert^{2} \, \text{d}t \right)\;.
  \end{align*}
  The scaling law of the variance can thus be computed, or compared, through Theorem \ref{thm:1-dim_tool} for $f$ that satisfies \eqref{hyperb_conv} for $\alpha>0$ and for $g$ in a dense subset of $L^2(\mathbb{R})$.
  \end{example}
  
  \begin{example}
 Consider the self-adjoint operator $A:H^{2m}(\mathbb{R}) \to L^2(\mathbb{R})$ for $m\in\mathbb{N}$, such that
  \begin{align*}
      A g(x) =(-1)^{(m-1)} \partial_x^{2m} g(x) \;,
  \end{align*}
  where $\partial_x$ denotes the weak derivative on $H^1(\mathbb{R})$ and we assume $g\in H^{2m}(\mathbb{R})$. We want to study the variance of the solution of \eqref{new_sist} for $u_0\in H^{2m}(\mathbb{R})$, $p<0$ and $Q$ a bounded operator with bounded inverse. For $g\in H^{2m}(\mathbb{R})$ and $f(k)=-k^{2m}$ for any $k\in\mathbb{R}$, we find that
  \begin{align*}
      \langle g, V_\infty g \rangle 
      &= \sigma^2 \int_{0}^{\infty} \langle  e^{t (A+p)}Qe^{t (A+p)}g , g\rangle \, \text{d}t
      = \Theta \left( \int_{0}^{\infty}  \lVert e^{t(A+p)}g \rVert^{2} \, \text{d}t \right)\\
      &= \Theta \left( \int_{0}^{\infty} \lVert \mathcal{F}^{-1} e^{t(\operatorname{T}_{f}+p)} \mathcal{F} g \rVert^{2} \, \text{d}t \right)
      = \Theta \left( \int_{0}^{\infty} \lVert e^{t(\operatorname{T}_f+p)} \mathcal{F} g \rVert^{2} \, \text{d}t \right).
  \end{align*}
  From Theorem \ref{thm:1-dim_tool}, the scaling law as $p\to 0^-$ for $g$ in a dense subset of $H^{2m}(\mathbb{R})$ is therefore $\Theta\left(p^{-1+\frac{1}{2m}} \right)$.
  \end{example}
  
  \begin{example}
  The previous example can easily be generalized for different self-adjoint linear differential operators through Theorem \ref{thm:1-dim_analytic}. Consider for instance the Swift-Hohenberg equation~\cite{KU2} on the real line, linearized around the trivial solution $u(x)=0$ for any $x\in\mathbb{R}$ and pertubed by additive white noise. It thus takes the form of \eqref{new_sist} with $u_0\in H^{4}(\mathbb{R})$, $Q=\operatorname{I}$ and $A:H^{4}(\mathbb{R}) \to L^2(\mathbb{R})$ such that 
  \begin{align*}
      A g(x) =-\left( 1+\partial_x^{2} \right)^2 g(x) \;,
  \end{align*}
  for any $g\in H^{4}(\mathbb{R})$. The scaling law of the variance of such a solution along  $g$ in a dense subset of $H^4(\mathbb{R})$ is given therefore by
    \begin{align*}
      \langle g, V_\infty g \rangle 
      &= \sigma^2 \int_{0}^{\infty} \langle  e^{t (A+p)} e^{t (A+p)}g , g\rangle \, \text{d}t
      = \Theta \left( \int_{0}^{\infty} \lVert e^{t(\operatorname{T}_f+p)} \mathcal{F} g \rVert^{2} \, \text{d}t \right)\;,
  \end{align*}
  with $f(k)=-\left( 1-k^2 \right)^2$ for any $k\in\mathbb{R}$. By Taylor expansion of $f$ at $k=\pm1$, one concludes that the rate of divergence is given by $\Theta\left((-p)^{-\frac{1}{2}} \right)$.
  \end{example}
  
  
   \begin{example}
  Consider $A:H^{4}(\mathbb{R}^2) \to L^2(\mathbb{R}^2)$ as the Swift-Hohenberg operator on two spatial dimensions, i.e.,
  \begin{align*}
      A g (x) = -\left( 1+\partial_{x_1}^{2}+\partial_{x_2}^{2} \right)^2 g(x) \;,
  \end{align*}
  for any $g\in H^{4}(\mathbb{R}^2)$ and $x=(x_1,x_2)$. We study the variance of the solution of \eqref{new_sist} for $u_0\in H^{4}(\mathbb{R}^2)$, $p<0$ and $Q$ a bounded operator with bounded inverse. We consider $g\in H^{4}(\mathbb{R})$ and $f_i(k)=k_i$ for $i\in\{1,2\}$ and $k=(k_1,k_2)\in\mathbb{R}^2$. Let  $\mathcal{F}$ denote the Fourier transform on $L^2(\mathbb{R}^2)$ and $\hat{g}=\mathcal{F} g$. Then one calculates
  \begin{align*}
      \langle g, V_\infty g \rangle 
      &= \sigma^2 \int_{0}^{\infty} \langle  e^{t (A+p)}Qe^{t (A+p)}g , g\rangle \, \text{d}t
      = \Theta \left( \int_{0}^{\infty}  \lVert e^{t(A+p)}g \rVert^{2} \, \text{d}t \right)\\
      &= \Theta \left( \mathlarger{\mathlarger{\int}}_{0}^{\infty} \left\lVert \mathcal{F}^{-1} e^{t\left(\operatorname{T}_{-\left(1-f_1^2-f_2^2\right)^2}+p\right)} \mathcal{F} g \right\rVert^{2} \, \text{d}t \right)
      = \Theta \left( \mathlarger{\mathlarger{\int}}_{0}^{\infty} \left\lVert e^{t\left(\operatorname{T}_{-\left(1-f_1^2-f_2^2\right)^2}+p\right)} \hat{g} \right\rVert^{2} \, \text{d}t \right)\;.
  \end{align*}
  We know that the scaling law as $p\to 0^-$ of such scalar product for a dense set of functions $g$ in $H^4(\mathbb{R}^2)$ is equivalent to the one assuming $g$ such that $\hat{g}=\mathbbm{1}_D$ where $D$ denotes the circle of radius $\sqrt{2}$ centered at the origin in $\mathbb{R}^2$. Through Fubini's Theorem we can study the integral on polar spatial coordinates as
  \begin{align*}
      &\int_{0}^{\infty} \int \int_D e^{t\left(-\left(1-k_1^2-k_2^2\right)^2+p\right)} \text{d}k \, \text{d}t
      =\int \int_D \frac{1}{\left(1-k_1^2-k_2^2\right)^2-p} \text{d}k
      =\int_0^{2\pi} \int_0^{\sqrt{2}} \frac{r}{\left(1-r^2\right)^2-p} \text{d}r \, \text{d}\theta \;.
  \end{align*}
  Introducing $r'=1-r^2$ and $r''=r'(-p)^{-\frac{1}{2}}$ we can using the substitution method and obtain
  \begin{align*}
      &\int_0^{2\pi} \int_0^{\sqrt{2}} \frac{r}{\left(1-r^2\right)^2-p} \text{d}r \, \text{d}\theta
      = \pi \int_{-1}^1 \frac{1}{{r'}^2-p} \text{d}r'  
      = (-p)^{-\frac{1}{2}} \int_{-(-p)^{-\frac{1}{2}}}^{(-p)^{-\frac{1}{2}}} \frac{1}{{r''}^2+1} \text{d}r''
      = \Theta\left((-p)^{-\frac{1}{2}} \right)\;.
  \end{align*}
  \end{example}

\section{Numerical simulations} \label{sec:num}
In this section, we numerically investigate the analytical results from Section \ref{sec:1-dim} and Section \ref{sec:N-dim} to gain more insight and also to obtain an outlook, on how they can be relevant in a more applied setting. The numerical methods used and discussed follow the theory of \cite{HI} and \cite{LO}.\\
We start by simulating the results obtained in Theorem \ref{thm:1-dim_tool} for the one-dimensional case and the tool function $f_{\alpha}(x) = - |x|^{\alpha}$ on an interval for $\alpha>0$. In order to approximate the solution of the studied SPDE, we use the Euler-Maruyama method, which we introduce considering the differential equation 
\begin{align} \label{eq:pre_num}
    \text{d}u(x,t) = \left(f_\alpha(x)+p\right)\; u(x,t) \, \text{d}t + \sigma \text{d}W(t) \, , \quad u(\cdot,0) = u_0\, , \quad 0 \leq t \leq T \, ,
\end{align}
for $\sigma, T>0>p$ and initial condition $u_0\in L^2(\mathcal{X})$. We then discretize the time interval $[0,T]$ by defining the time step $\delta \mathtt{t} = \frac{T}{\mathtt{nt}}$ for a certain positive integer $\mathtt{nt}$, to be the number of steps in time, and $ \tau_i := i\; \delta \mathtt{t}$, the time passed after $i$ time steps. Further, we discretize also the space interval $\mathcal{X}=[-L, L]$ into its internal points $r_n=-L+2\frac{n}{\mathtt{N}+1}L$ for $n\in\{1,\dots,\mathtt{N}\}$, with $\mathtt{N}$ being an integer that defines the number of mesh points. The numerical approximations of $u(\cdot,\tau_i)$, $f_\alpha$ and $W(\cdot,\tau_i)$ are labeled respectively as $\mathtt{u}_i, \mathtt{f}_\alpha, \mathtt{W}_i \in\mathbb{R}^{\mathtt{N}+2}$ for any $i\in\{0,\dots,\mathtt{nt}\}$. Then, by the implicit Euler-Maruyama method, the numerical simulation takes the form 
\begin{align*}
    \mathtt{u}_i = \mathtt{u}_{i-1} + \left( \mathtt{f}_\alpha+p \right) \mathtt{u}_{i} \delta \mathtt{t} + \sigma (\mathtt{W}_i - \mathtt{W}_{i-1}) \, , \quad i = 1,2, \ldots, \mathtt{nt} 
\end{align*}
which approximates the integral form of the SPDE. For fixed $i\in\{1,\ldots,\mathtt{nt}\}$, the term $\mathtt{W}_i - \mathtt{W}_{i-1}$ can be expressed as 
\begin{align*}
   \mathtt{W}_i - \mathtt{W}_{i-1}= \sqrt{\delta \mathtt{t}} \sum_{m=1}^{M} \sqrt{q_m} W_i^{(m)}\mathtt{b}_m
\end{align*}
with $M\leq \mathtt{N}$ being the number of directions in the space function on which the noise is taken numerically into account, $\left\{W_i^{(m)}\right\}_{m\in\{1,\dots,M\}}$ a collection of independent standard Gaussian random variables and the randomly generated $\left\{\left(q_m, \mathtt{b}_m \right)\right\}_{m\in\{1,\dots,M\}}$, which are respectively the first $M$ eigenvalues and approximations in $\mathbb{R}^{\mathtt{N}+2}$ of the eigenfunctions of the covariance operator $Q$ of $W$ in \eqref{eq:pre_num}.\footnote{The eigenvalues $\left\{\left(q_m \right)\right\}_{m\in\{1,\dots,M\}}$ are generated uniformly in $[0.5,2]$.} \\
We numerically simulate the results from Theorem \ref{thm:1-dim_tool} for the function $f_{\alpha}=- |x|^{\alpha}$ for $\alpha > 0$ as defined by \eqref{eq:exfunc}. We consider the case $g(x)= \mathbbm{1}_{[-\varepsilon,\varepsilon]}$ for $\left|\left\{r_n\in[-\varepsilon,\varepsilon]\;|\; n\in\{1,\dots,\mathtt{N}\}\right\}\right|=M$.\footnote{The orthogonal matrix whose indices are $\left\{\mathtt{b}_m(n) | r_n\in[-\varepsilon,\varepsilon] \right\}_{m\in\{1,\dots,M\}}$ is generated through $\mathtt{O}(M)$ Haar distribution, for $\mathtt{O}(M)$ that indicates the space of orthogonal $M\times M$ matrices with real-valued elements \cite{mezzadri2006generate}. The rest of the elements in $\left\{\left(\mathtt{b}_m \right)\right\}_{m\in\{1,\dots,M\}}$, which are irrelevant to the study of the time-asymptotic variance along $g$, can be obtained through Graham-Schmidt method.} Further, we assume $Q$ and its inverse $Q^{-1}$ to be bounded operators. The fact that $Q$ is not assumed to be the identity operator makes the random variables $u(x_1,t)$ and $u(x_2,t)$ dependent for any $x_1,x_2\in\mathcal{X}$ and $t>0$. Hence the simulation of $u$ must be studied as an SPDE rather than a collection of SDEs on the resolution points.
\begin{figure}[h!]
    \centering
    \begin{overpic}[width= 0.6\textwidth]{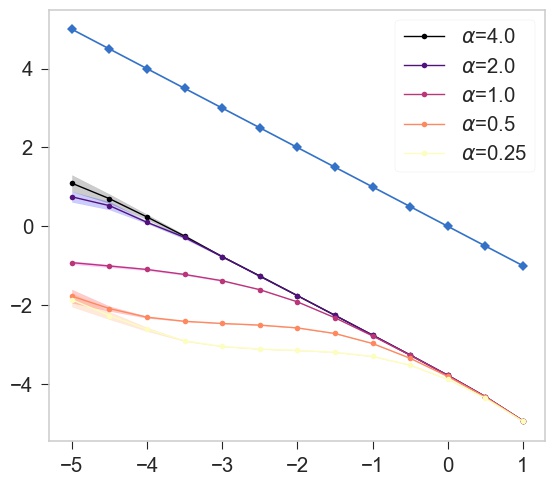}
    \put(460,0){\large{$\log_{10} (-p)$}}
    \put(-30,600){\large{\rotatebox{270}{$\log_{10} \left(\langle V_{\infty}g, g \rangle \right)$}}}
    \end{overpic}
    \caption{Log-log plot that describes the behaviour of $\langle V_{\infty}g, g \rangle $ as $p$ approaches $0^-$ in accordance to the choice of the tool function $f_\alpha$. The circles are obtained as the mean value of $\log_{10} \left(\langle V_{\infty}g, g \rangle \right)$ given by 10 independent simulations. The shaded areas have width equal to the numerical standard deviation. Lastly, the blue line has a slope equal to $-1$ and is a reference for the scaling law. \\
    For $\alpha\geq 1$ the expected slope from Theorem \ref{thm:1-dim_tool} is shown close to $p=10^{-5}$. For $\alpha<1$ the convergence is visible until $p$ assumes small values. In fact, for small $\mathtt{N}$, the log-log plot displays slope $-1$ induced by the divergence being only perceived on $x=0$ and therefore leading to a behaviour similar to that of a Ornstein-Uhlenbeck process \cite{kuehn2015multiple}.}
    \label{fig:loglog}
\end{figure}
We simulate the projection $\langle u(\cdot,\tau_i), g \rangle$ with $\textnormal{proj}_i=\underset{n:r_n\in[-\varepsilon,\varepsilon]}{\sum} \mathtt{u}_i(n)$ for any $i\in\{1,\dots,nt\}$. We approximate the behaviour of the scalar product that defines the variance along $g$, $\langle V_{\infty}g, g \rangle$, as the numerical variance in time $i$ of $\textnormal{proj}_i$. The results are displayed as a log-log plot in Figure \ref{fig:loglog}. As we are interested in the behaviour for $p$ approaching zero from below, we look especially at negative values of $\log_{10}(-p)$. Validating the analytic results in Table \ref{tab:onedim}, for $\alpha > 1$ we observe $\log_{10}( \langle V_{\infty}g, g \rangle)$ to assume a negative slope of $-1 + \frac{1}{\alpha}$ as $\log_{10}(-p)$ approaches $-\infty$. In the case $\alpha = 1$ we expect a logarithmic divergence in the log-log plot as $\log_{10}(-p)$ decreases. Lastly, we expect convergence for $0<\alpha<1$ which is shown up to small values of $p$ due to numerical errors.\footnote{$L=1$, $\mathtt{N}=99999$, $\mathtt{nt}=10000000$, $\delta\mathtt{t}=0.1$, $\sigma=0.1$, $\varepsilon=0.01$, $M=999$. The shape of $Q$ is randomly generated.} \\

Next, we study the two dimensional case and numerically simulate the asymptotic behaviour of the upper bound of $\langle V_{\infty}g,g \rangle$ as described in Theorem \ref{thm:2-dim_analytic}. In particular, our goal is to cross-validate the results analytically found for indices $i_1, i_2$ such that $i_2 > i_1 > 1$. For this case, we have found the scaling law of the upper bound as $\Theta\left((-p)^{-1+\frac{1}{i_2}}\right)$. We therefore compute the dependence of $\int_0^\varepsilon \int_0^\varepsilon \frac{1}{x_1^{i_1}x_2^{i_2}-p}\, \text{d}x_1 \, \text{d}x_2$ on $p>0$ in a log-log plot.

\begin{figure}[h!]
    \centering
    \subfloat[\footnotesize{$\varepsilon=1, i_1 = 2, i_2 = 10$}]{\begin{overpic}[width= 0.43\textwidth]{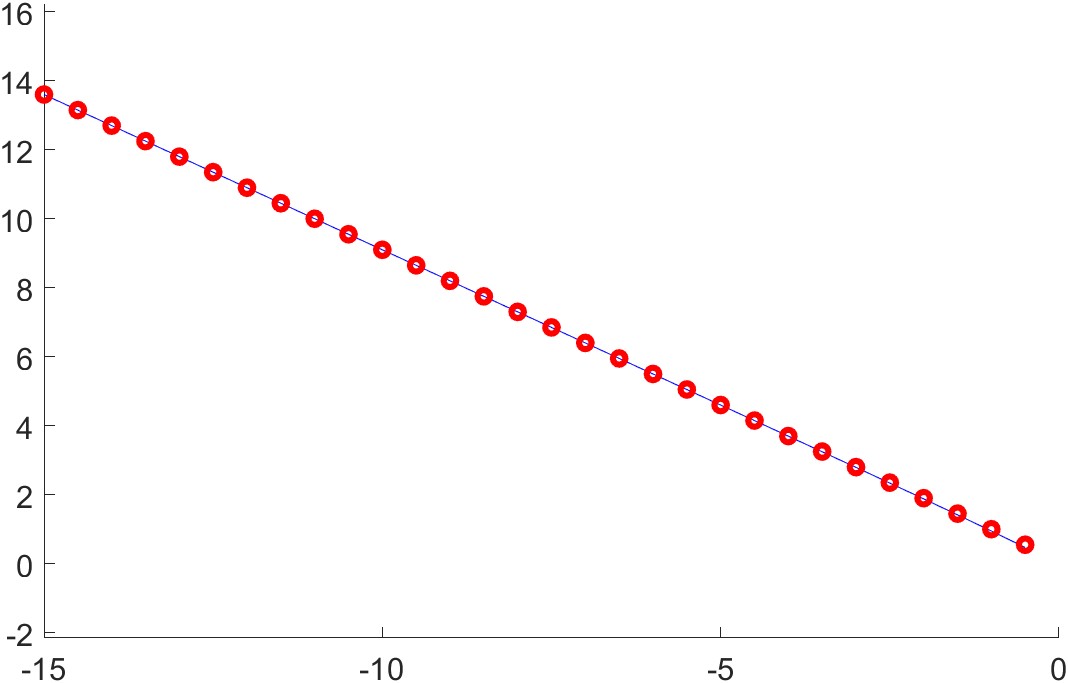}
    \put(440,-30){\footnotesize{$\log_{10}(-p)$}}
    \put(-90,530){\footnotesize{\rotatebox{270}{$\log_{10} \left(\int_0^\varepsilon \int_0^\varepsilon \frac{1}{x^{j_{\ast}} - p}\, \text{d}x\right)$}}}
    \end{overpic}}
    \hspace{1cm}
    \subfloat[\footnotesize{$\varepsilon=1, i_1 = 1, i_2 = 2$}]{\begin{overpic}[width= 0.43\textwidth]{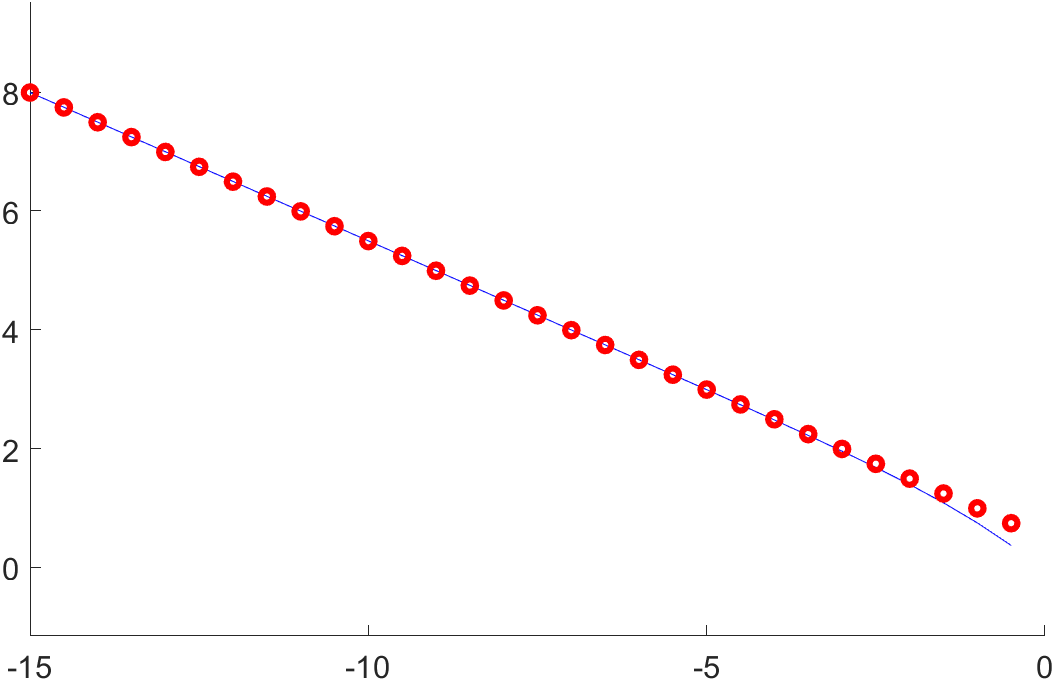}
    \put(440,-30){\footnotesize{$\log_{10}(-p)$}}
    \put(-90,530){\footnotesize{\rotatebox{270}{$\log_{10} \left(\int_0^\varepsilon \int_0^\varepsilon \frac{1}{x^{j_{\ast}} - p}\, \text{d}x\right)$}}}
    \end{overpic}}
    \vfill
    \subfloat[\footnotesize{$\varepsilon=0.1, i_1 = 2, i_2 = 10$}]{\begin{overpic}[width= 0.43\textwidth]{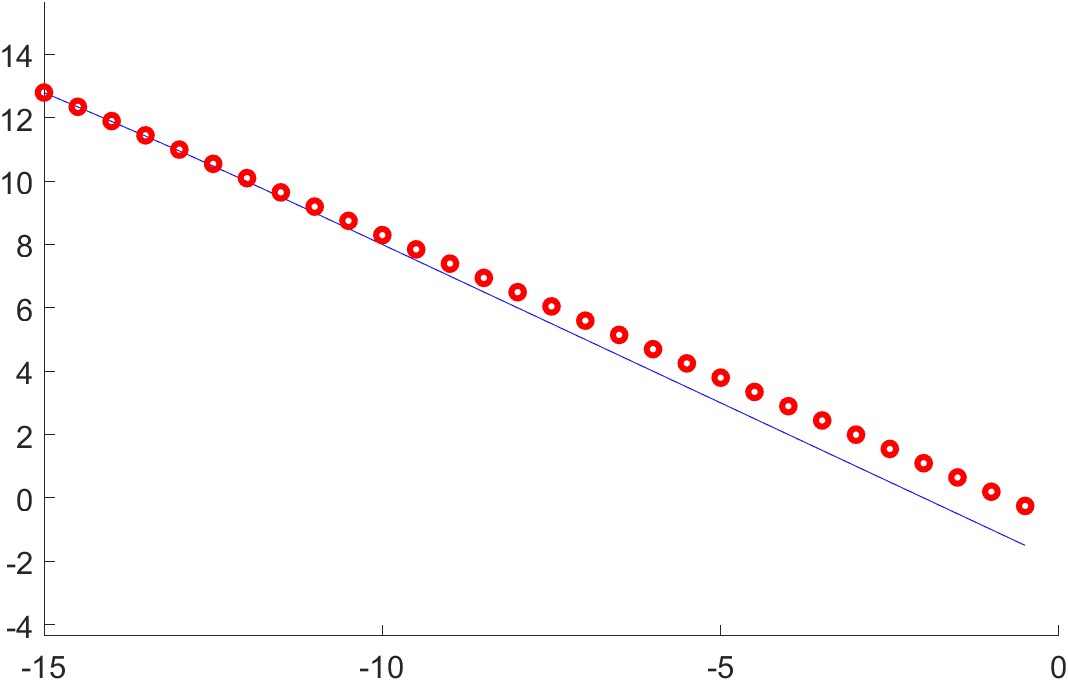}
    \put(440,-30){\footnotesize{$\log_{10}(-p)$}}
    \put(-90,530){\footnotesize{\rotatebox{270}{$\log_{10} \left(\int_0^\varepsilon \int_0^\varepsilon \frac{1}{x^{j_{\ast}} - p}\, \text{d}x\right)$}}}
    \end{overpic}}
    \hspace{1cm}
    \subfloat[\footnotesize{$\varepsilon=0.1, i_1 = 1, i_2 = 2$}]{\begin{overpic}[width= 0.43\textwidth]{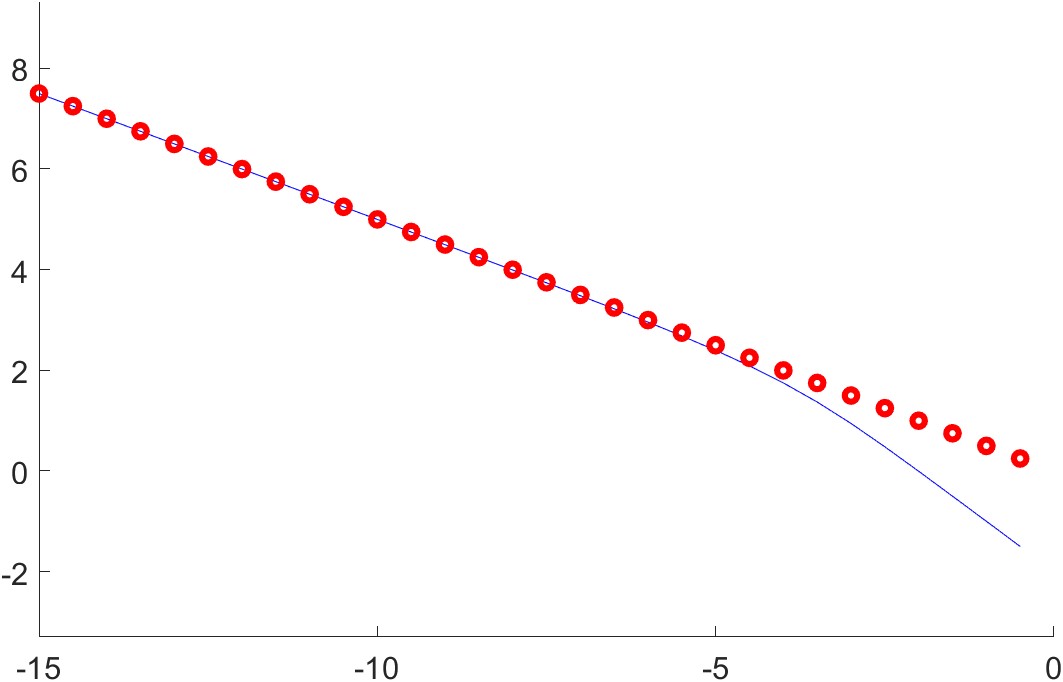}
    \put(440,-30){\footnotesize{$\log_{10}(-p)$}}
    \put(-90,530){\footnotesize{\rotatebox{270}{$\log_{10} \left(\int_0^\varepsilon \int_0^\varepsilon \frac{1}{x^{j_{\ast}} - p}\, \text{d}x\right)$}}}
    \end{overpic}}
    \caption{The solid line describes the scaling law of the upper bound for the two-dimensional problem illustrated by $\log_{10} \left(\int_0^\varepsilon \int_0^\varepsilon \frac{1}{x^{j_{\ast}} - p}\, \text{d}x\right)$  and decreasing $\log_{10}(-p)$ with $x = (x_1, x_2)$ and $j_\ast=(i_1,i_2)$. The circle line serves as comparison with slope $-1+\frac{1}{i_2}$.}
    \label{fig:twodim_numerics}
\end{figure}
The results shown in Figure \ref{fig:twodim_numerics} are obtained for different choices of $\varepsilon$ and of the exponents $i_2$ and $i_1$ such that $i_2 >i_1 > 1$. We see that our results are confirmed as the double integrals, displayed as solid lines, present slope $-1 + \frac{1}{i_2}$ for small values of $-p$. Decreasing the parameter $\varepsilon>0$ implies a decrease of the threshold $q_c>0$ such that for any $-p>q_c$ the slope of $\log_{10} \left(\int_0^\varepsilon \int_0^\varepsilon \frac{1}{x_1^{i_1} x_2^{i_2} - p}\, \text{d}x_1 \text{d}x_2 \right)$ in the log-log plot is approximately $-1$. This is induced by the fact that $x_\ast=0\in\mathbb{R}^2$ is the unique local zero of the continuous function $-x_1^{i_1} x_2^{i_2}$.

\section*{Conclusion and Outlook}
We have studied the time-asymptotic covariance operator of the system \eqref{sist} along certain types of functions $g\in L^2(\mathcal{X})$, i.e., $\langle V_{\infty}g,g \rangle$. Our results contribute to the study of early-warning signs for the qualitative change that arises as the parameter $p<0$ in \eqref{sist} approaches $0$ from below. A critical role for the behaviour of the early-warning sign is given to the values assumed by the non-positive function $f:\mathcal{X}\subseteq\mathbb{R}^N\to\mathbb{R}$, in \eqref{sist}, in the neighbourhood of one of its roots, labeled as $x_\ast$.\\
We have found sharp scaling laws for locally analytic functions $f$ on a one-dimensional domain by utilizing the tool function $f_{\alpha}(x)$. For the two- and three-dimensional cases, we have studied the asymptotic behaviour of upper bounds of $\langle V_{\infty}g,g \rangle$. For general dimensions $N$ we have found convergence for the variance for certain functions $f$. In these cases, the search for a finer early-warning sign that provides more information might be useful. \\
In future work, a novel route could be the study of similar early-warning sign problems for different types of operators, such as linear differential operators that are not self-adjoint. Another promising research direction is to aim to determine weaker assumptions on the linear operators present in \eqref{new_sist} or assuming unbounded $g$ on $x_\ast$ for $N>1$. The results for the one-dimensional and higher-dimensional domain have been proven for analytic functions $f$. A deeper discussion on multiplication operators with functions that do not behave locally like polynomials could be of further interest. Lastly, the results obtained in the paper, as tables listing rates, could be basis for an induction method that gives also results for $N>3$.

\appendix

\section{Appendix: Proof Theorem \ref{thm:2-dim_analytic}} \label{appendA}

The following proof justifies the description of the scaling law of the upper bound presented in Theorem \ref{thm:2-dim_analytic} and its results are summarized in Table \ref{tab:twodim}. The proof is based upon a splitting of the upper bound into two summands, labeled as $\mathfrak{A}$ and $\mathfrak{B}$, the first of which is studied similarly as the methods employed in the proof of Theorem \ref{thm:1-dim_tool}.
\begin{proof}[Proof Theorem \ref{thm:2-dim_analytic}]
Up to permutation of indices, we assume $i_2\geq i_1 \geq 1$ for simplicity. We also set $\varepsilon=1$, up to rescaling of the space variable $x$ and subsequently of $\mathcal{X}$. Lemma \ref{lm:N-dim_up} implies for a positive constant $C>0$ that
\begin{align*} 
    \langle V_{\infty}g,g \rangle 
    &\leq \frac{1}{C} \int_0^1 \int_0^1 \frac{1}{x_1^{i_1}x_2^{i_2}-p} \, \text{d}x_2 \text{d}x_1 
    = \frac{-1}{C p} \mathlarger{\mathlarger{\int}}_0^1 \mathlarger{\mathlarger{\int}}_0^1 \frac{1}{\left(x_1(-p)^{-\frac{1}{2 i_1}}\right)^{i_1} \left(x_2(-p)^{-\frac{1}{2 i_2}} \right)^{i_2}+1} \, \text{d}x_2 \text{d}x_1 \, .
 \end{align*}
For convenience, we will write $q:=-p >0$. Through integration by substitution with $y_n:= x_nq^{-\frac{1}{2 i_n}}$ for $n \in \{ 1,2\}$ we obtain 
\begin{align}  \label{eq:twodimstart}
    \langle V_{\infty}g,g \rangle 
    &\leq \frac{1}{Cq} \mathlarger{\mathlarger{\int}}_0^1 \mathlarger{\mathlarger{\int}}_0^1 \frac{1}{\left(x_1 q^{-\frac{1}{2 i_1}}\right)^{i_1} \left(x_2 q^{-\frac{1}{2 i_2}} \right)^{i_2}+1} \, \text{d}y_2 \text{d}y_1 \\
    &= \frac{1}{C} q^{-1 + \frac{1}{2}\left(\frac{1}{i_1}+\frac{1}{i_2}\right)}\int_0^{ q^{-\frac{1}{2i_1}}}\int_0^{ q^{-\frac{1}{2i_2}}} \frac{1}{y_1^{ i_1}y_2^{i_2} +1} \, \text{d}y_2 \, \text{d} y_1 \,. \nonumber
 \end{align}
We evaluate the integral by substituting $z = \frac{1}{y_1^{i_1}y_2^{i_2} + 1}$  to get
\begin{align}  \label{eq:twodim}
    \langle V_{\infty}g,g \rangle &\leq \frac{1}{C} q^{-1 + \frac{1}{2}\left(\frac{1}{i_1}+\frac{1}{i_2}\right)}\int_0^{ q^{-\frac{1}{2i_1}}}\int_0^{ q^{-\frac{1}{2i_2}}} \frac{1}{y_1^{ i_1}y_2^{i_2} +1} \, \text{d}y_2 \, \text{d} y_1  \nonumber \\
    &= \frac{1}{C} q^{-1 + \frac{1}{2}\left(\frac{1}{i_1}+\frac{1}{i_2}\right)} \mathlarger{\mathlarger{\int}}_0^{ q^{-\frac{1}{2i_1}}}\int_{ \frac{1}{y_1^{i_1} q^{-\frac{1}{2}}+1}}^{1} z z^{-2} \frac{1}{i_2} \left(\frac{1}{z}-1\right)^{\frac{1}{i_2} -1}y_1^{-\frac{i_1}{i_2}} \, \text{d}z \, \text{d}y_1\\
    &= \frac{1}{C} q^{-1 + \frac{1}{2}\left(\frac{1}{i_1}+\frac{1}{i_2}\right)} \mathlarger{\mathlarger{\int}}_0^{ q^{-\frac{1}{2i_1}}} \left( \frac{ q^{-\frac{1}{2i_2}}}{y_1^{i_1} q^{-\frac{1}{2}}+1}+ \int_{\frac{1}{y_1^{i_1} q^{-\frac{1}{2}}+1}}^1 \left( \frac{1}{z} - 1 \right)^{\frac{1}{i_2}}y_1^{-\frac{i_1}{i_2}}\, \text{d}z \right) \, \text{d}y_1  \nonumber \\
    &= \frac{1}{C} \underbrace{ q^{-1 + \frac{1}{2 i_1}}\int_0^{ q^{-\frac{1}{2i_1}}} \frac{1}{y_1^{i_1} q^{-\frac{1}{2}}+1} \, \text{d}y_1}_{\mathfrak{A}} 
    + \frac{1}{C} \underbrace{q^{-1 + \frac{1}{2}\left(\frac{1}{i_1}+\frac{1}{i_2}\right)} \mathlarger{\mathlarger{\int}}_0^{ q^{-\frac{1}{2i_1}}}\int_{\frac{1}{y_1^{i_1} q^{-\frac{1}{2}}+1}}^1 \left( \frac{1}{z} - 1 \right)^{\frac{1}{i_2}}y_1^{-\frac{i_1}{i_2}}\, \text{d}z \, \text{d}y_1}_{\mathfrak{B}} \, . \nonumber
 \end{align}
In the following, we first analyze $\mathfrak{A}$, before evaluating the order of $\mathfrak{B}$. Further, as seen above, the choice of $i_1$ and $i_2$ dictates the rate of the upper bound. Hence, we look at different cases of choosing $i_1$ and $i_2$, whose numbering is displayed in Table \ref{tab:twodim}.
\begin{itemize}[label=$\lozenge$]
\item Cases 1 and 3 for $\mathbf{\mathfrak{A}}$.\\
Note that the order of $\mathfrak{A}$ is independent of $i_2$, which simplifies the study of its scaling law. We start by discussing $\mathfrak{A}$ for $i_2 \geq i_1 > 1$. Through the substitution $y_1' = y_1 q^{-\frac{1}{2i_1}}$ it follows that 
\begin{align*} 
    \mathfrak{A} = 
    q^{-1 + \frac{1}{i_1}}\int_0^{q^{-\frac{1}{i_1}}} \frac{1}{y_1'^{i_1} +1} \, \text{d}y_1' \; .
 \end{align*} 
As already established in \eqref{eq:onedim_conv}, the integral is converging for $p \to 0^{-}$, and therefore we find a rate of divergence for the first summand given by
\begin{align} \label{eq:mathfrakA_1}
\Theta \left( (-p)^{-1 + \frac{1}{i_1}} \right)
\end{align}
for $p \to 0^{-}$ and $i_2 \geq i_1 > 1$. 
\item Cases 2 and 4 for $\mathbf{\mathfrak{A}}$.\\
Next, we consider $\mathfrak{A}$ assuming that $i_2 \geq i_1 = 1$. Through the same substitutions present in the previous case we get  
\begin{align*} 
    \mathfrak{A} 
    = \int_0^{q^{-1}} \frac{1}{y_1' +1} \, \text{d}y_1'
    =  \log\left(  q^{-1} +1 \right) \;.
 \end{align*}
This expression diverges for $q$ approaching zero from above, and for $i_2 \geq i_1 = 1$ we see that
\begin{align} \label{eq:mathfrakA_2}
    \Theta \left(\log \left((-p)^{-1}\right) \right) 
    = \Theta \left(-\log \left(-p\right) \right)
\end{align}
is the rate of divergence for $p$ approaching zero from below.  
\end{itemize}
\begin{itemize}[label=$\blacklozenge$]
\item Case 1 for $\mathbf{\mathfrak{B}}$.\\
We consider the case $i_2 > i_1 > 1$. In regard to $\mathfrak{B}$, our first goal is to switch integrals for more convenient computation.
Through Fubini's Theorem we obtain 
\begin{align}  \label{eq:case1_1}
    \mathfrak{B} &= q^{-1 + \frac{1}{2}\left(\frac{1}{i_1}+\frac{1}{i_2}\right)} \int_{\frac{1}{ q^{-1}+1}}^1 \int_{\left(\frac{1}{z} -1\right)^{\frac{1}{i_1}} q^{\frac{1}{2i_1}}}^{ q^{-\frac{1}{2i_1}}} \left( \frac{1}{z} - 1 \right)^{\frac{1}{i_2}} y_1^{-\frac{i_1}{i_2}}\, \text{d}y_1 \, \text{d}z \nonumber\\
    &= q^{-1 + \frac{1}{2}\left(\frac{1}{i_1}+\frac{1}{i_2}\right)} \mathlarger{\mathlarger{\int}}_{\frac{1}{ q^{-1}+1}}^1 \left( \frac{1}{z} - 1 \right)^{\frac{1}{i_2}} \left[ \frac{1}{\left(1- \frac{i_1}{i_2}\right)} y_1^{-\frac{i_1}{i_2}+1} \right]_{\left( \frac{1}{z} -1\right)^{\frac{1}{i_1}} q^{\frac{1}{2i_1}}}^{ q^{-\frac{1}{2i_1}}} \, \text{d}z   \\
    & = \frac{q^{-1 + \frac{1}{2}\left(\frac{1}{i_1}+\frac{1}{i_2}\right)}}{1-\frac{i_1}{i_2}} \int_{\frac{1}{ q^{-1}+1}}^1 \left( \frac{1}{z} - 1 \right)^{\frac{1}{i_2}} \left( q^{\frac{1}{2i_2}-\frac{1}{2i_1}} - \left( \frac{1}{z}-1 \right)^{\frac{1}{i_1} - \frac{1}{i_2}} q^{\frac{1}{2i_1}-\frac{1}{2i_2}} \right)\, \text{d}z \nonumber \\
    & = \Big(1-\frac{i_1}{i_2}\Big)^{-1}
    \Bigg(  q^{-1 + \frac{1}{i_2}} \int_{\frac{1}{ q^{-1}+1}}^1 \left( \frac{1}{z} - 1 \right)^{\frac{1}{i_2}} \, \text{d}z 
    - q^{-1 + \frac{1}{i_1}} \int_{\frac{1}{ q^{-1}+1}}^1 \left( \frac{1}{z}-1 \right)^\frac{1}{i_1} \, \text{d}z \Bigg) 
     \; . \nonumber
 \end{align}
By substitution with $z^{\prime} = \frac{1}{z}-1$ we obtain 
\begin{align} \label{eq:case1_2}
    & \mathfrak{B}
    = \Big(1-\frac{i_1}{i_2}\Big)^{-1}
    \Bigg( q^{-1 + \frac{1}{i_2}} \int_0^{ q^{-1}} \frac{z^{\prime \frac{1}{i_2}}}{(z^{\prime}+1)^2} \, \text{d}z' 
    - q^{-1 + \frac{1}{i_1}} \int_0^{q^{-1}} \frac{z^{\prime \frac{1}{i_1}}}{(z^{\prime}+1)^2} \, \text{d}z' \Bigg) \;.
 \end{align}
Given $i_2>i_1>1$, the integrals in \eqref{eq:case1_2} can be uniformly bounded for any $q>0$. Hence we find that for $\mathfrak{B}$ the rate of divergence is given by 
\begin{align*} 
    \Theta \left( q^{-1+\frac{1}{i_2}} \right) - \Theta \left( q^{-1+ \frac{1}{i_1}}\right) =   \Theta \left( (-p)^{-1+\frac{1}{i_2}} \right) - \Theta \left( (-p)^{-1+ \frac{1}{i_1}}\right) = \Theta\left( (-p)^{-1+\frac{1}{i_2}} \right)
 \end{align*}
as $p$ approaches zero from below. From \eqref{eq:twodim} we get the rate of divergence of the upper bound as
\begin{align*} 
    \Theta\left((-p)^{-1+ \frac{1}{i_1}}\right)
    + \Theta\left((-p)^{-1+\frac{1}{i_2}}\right) = \Theta\left((-p)^{-1+ \frac{1}{i_2}}\right)\, .
 \end{align*}
for $i_2 > i_1 > 1$.
\item Case 2 for $\mathbf{\mathfrak{B}}$.\\
We suppose now that $i_2 > i_1 =1.$ We can follow the steps in the case above, up to \eqref{eq:case1_2}. We then note that 
\begin{align}  \label{integral_tool}
     & \int_0^{ q^{-1}} \frac{z'}{(z^{\prime}+1)^2} \, \text{d}z'
     =\log\left( q^{-1}+1\right) + \frac{1}{q^{-1}+1}-1 \;.
 \end{align}
Therefore, we obtain that $\mathfrak{B}$ diverges with rate 
\begin{align*} 
    \Theta\left( (-p)^{-1 + \frac{1}{i_2}} \right) - \Theta \left( \log ((-p)^{-1}) \right) = \Theta\left( (-p)^{-1 + \frac{1}{i_2}} \right) \quad ,
 \end{align*} 
for $p \to 0^{-}$. Combining this result with equation (\ref{eq:mathfrakA_2}) we obtain for $i_2 > i_1 =1$ an overall rate of divergence of 
\begin{align*}
    \Theta\left(- \log (-p) \right)
   + \Theta\left( (-p)^{-1+\frac{1}{i_2}} \right)   
   = \Theta\left( (-p)^{- 1 + \frac{1}{i_2}} \right)\,.
\end{align*}
\item Case 3 for $\mathbf{\mathfrak{B}}$.\\
For the third case we consider $\mathfrak{B}$ with $i_2 = i_1 = i > 1$, for which we get
\begin{align*} 
     \mathfrak{B} &= q^{-1 + \frac{1}{i}} \int_{\frac{1}{ q^{-1}+1}}^1 \int_{\left(\frac{1}{z} -1\right)^{\frac{1}{i}}  q^{\frac{1}{2i}}}^{ q^{-\frac{1}{2i}}} \left( \frac{1}{z} - 1 \right)^{\frac{1}{i}} y_1^{-1}\, \text{d}y_1 \, \text{d}z  \\
    &= q^{-1 + \frac{1}{i}} \int_{\frac{1}{ q^{-1}+1}}^1 \left( \frac{1}{z} - 1 \right)^{\frac{1}{i}} \left( \log \left( q^{-\frac{1}{2i}} \right) - \log \left(\left(\frac{1}{z} -1 \right)^{\frac{1}{i}} q^{\frac{1}{2i}} \right) \right)\, \text{d}z \\
    &= q^{-1 + \frac{1}{i}} 
    \left( - \frac{\log(q)}{i}
    \int_{\frac{1}{ q^{-1}+1}}^1 \left( \frac{1}{z} - 1 \right)^{\frac{1}{i}} \, \text{d}z 
    - \frac{1}{i} \int_{\frac{1}{ q^{-1}+1}}^1 \left( \frac{1}{z} - 1 \right)^{\frac{1}{i}} \log\left( \frac{1}{z} - 1 \right)\, \text{d}z\right) \; .
 \end{align*}
Again, we substitute $z^{\prime} = \frac{1}{z}-1$ to get 
\begin{align} \label{eq:case3}
     \mathfrak{B} = q^{-1 + \frac{1}{i}} 
    \left(  - \frac{\log(q)}{i} 
    \int_0^{ q^{-1}} \frac{z'^{\frac{1}{i}}}{(z'+1)^2} \, \text{d}z' 
    - \frac{1}{i} \int_0^{ q^{-1}} \frac{z'^{\frac{1}{i}} \log(z')}{(z'+1)^2} \, \text{d}z'\right) \, .
 \end{align}
The integrals in \eqref{eq:case3} are uniformly bounded for any $q>0$ due to $i>1$. Equation \eqref{eq:case3} implies that $\mathfrak{B}$ assumes, in this case, rate of divergence
\begin{align*} 
    \Theta \left( -(-p)^{-1+ \frac{1}{i}}\log(-p) \right) +  \Theta \left( (-p)^{-1 + \frac{1}{i}}\right) =  \Theta \left( -(-p)^{-1+ \frac{1}{i}}\log(-p) \right) 
 \end{align*} 
for $ p \to 0^{-}$.\\
From \eqref{eq:twodim} and \eqref{eq:mathfrakA_1} we get that the rate of divergence of the upper bound is
\begin{align*} 
    \Theta \left((-p)^{-1+\frac{1}{i}} \right) 
    +  \Theta \left( -(-p)^{-1+\frac{1}{i}}\log(-p) \right)
    =  \Theta \left( - (-p)^{-1+\frac{1}{i}}\log(-p) \right)\, .
 \end{align*}
\item Case 4 for $\mathbf{\mathfrak{B}}$.\\
In the last case we suppose that $i_2 = i_1 =i=1$. Hence \eqref{eq:case3} assumes the form
\begin{align}  \label{eq:twodim_case4}
     \mathfrak{B} &=
     - \log(q) 
    \int_0^{ q^{-1}} \frac{z'}{(z'+1)^2} \, \text{d}z' 
    - \int_0^{ q^{-1}} \frac{z' \log(z')}{(z'+1)^2} \, \text{d}z'\\
    &=  - \log(q) 
    \Bigg(\log( q^{-1}+1)+\frac{1}{ q^{-1}+1}-1\Bigg)
    - \int_0^{ q^{-1}} \frac{z' \log(z')}{(z'+1)^2} \, \text{d}z' \;. \nonumber
 \end{align}
We obtain therefore from \eqref{eq:twodim_case4} and \eqref{nice2}, in Appendix \ref{AppendC}, that the two leading terms in $\mathfrak{B}$ diverge respectively as $\log^2(q)$ and as $-\frac{1}{2}\log^2(q)$. Hence, we get divergence of $\mathfrak{B}$ with the rate
\begin{align*} 
    \Theta \left( \log^2(-p) \right) \quad \text{for } p \to 0^{-} \, . 
 \end{align*}
Combining such result with the rate of divergence of \eqref{eq:mathfrakA_2} for $\mathfrak{A}$ we obtain
\begin{align*} 
    \Theta \left(- \log(-p) \right)
    + \Theta \left( \log^2(-p) \right) 
    = \Theta \left( \log^2(-p) \right) \quad \text{for } p \to 0^{-} \;,
 \end{align*}
as an overall upper bound for $i_2 = i_1 = 1$. \\
\end{itemize}
    
\end{proof}

\section{Appendix: Proof Theorem \ref{thm:3-dim_analytic}} \label{appendB}

The subsequent proof validates the description of the rate of divergence of the upper bound discussed in Theorem \ref{thm:3-dim_analytic} and in Table \ref{tab:threedim}. The approach is similar to the one involved in the proof of Theorem \ref{thm:2-dim_analytic} as the upper bound is split in two summands, labeled $\mathfrak{C}$ and $\mathfrak{D}$. The first summand is studied similarly to the upper bound in the stated proof for the two-dimensional case.
\begin{proof}[Proof Theorem \ref{thm:3-dim_analytic}]

Lemma \ref{lm:N-dim_up} implies that
\begin{align*} 
    \langle V_{\infty}g, g \rangle 
    \leq \frac{\sigma^2}{2C} \int_{0}^\varepsilon \int_{0}^\varepsilon \int_{0}^\varepsilon \frac{1}{x_1^{i_1} x_2^{i_2} x_3^{i_3}-p} \, \text{d}x_3 \, \text{d}x_2 \, \text{d}x_1 
 \end{align*}
for a constant $C>0$ and any $p<0$. As in the two-dimensional case, we make use of $q:=-p > 0$ and we fix $\varepsilon=1$, up to rescaling of the spatial variable $x$. Up to permutation of the indices we consider $i_3\geq i_2 \geq i_1 \geq 1$, thus excluding the cases described in Remark \ref{rmk:dim_reduction}.\\

We follow a similar computation of the integral as in Theorem \ref{thm:2-dim_analytic}. First, we study it in the coordinates ${y_n = x_nq^{-\frac{1}{3i_n}}}$ for all $n \in \{1,2,3 \}$ and then we substitute $y_3$ with ${z = \frac{1}{y_1^{i_1}y_2^{i_2}y_3^{i_3}+1}}$ to obtain
\begin{align*}  
    & \int_{0}^1 
    \int_{0}^1 
    \int_{0}^1 
    \frac{1}{ x_1^{i_1} x_2^{i_2} x_3^{i_3}+q} \, \text{d}x_3 \, \text{d}x_2 \, \text{d}x_1 \\
    &=q^{-1 + \frac{1}{3}\left( \frac{1}{i_1}+\frac{1}{i_2}+\frac{1}{i_3} \right)} 
    \int_0^{ q^{-\frac{1}{3i_1}}} 
    \int_0^{ q^{-\frac{1}{3i_2}}}
    \int_0^{ q^{-\frac{1}{3i_3}}} 
    \frac{1}{y_1^{i_1}y_2^{i_2}y_3^{i_3}+1} \, \text{d}y_3 \, \text{d}y_2 \, \text{d}y_1 \\
    &= q^{-1 + \frac{1}{3} \left( \frac{1}{i_1}+\frac{1}{i_2}+\frac{1}{i_3} \right)} 
    \mathlarger{\mathlarger{\int}}_0^{ q^{-\frac{1}{3i_1}}} 
    \mathlarger{\mathlarger{\int}}_0^{ q^{-\frac{1}{3i_2}}} 
    \int_{\frac{1}{y_1^{i_1}y_2^{i_2} q^{-\frac{1}{3}}+1}}^1 z z^{-2} \frac{1}{i_3}\left( \frac{1}{z}-1\right)^{\frac{1}{i_3}-1}y_1^{-\frac{i_1}{i_3}}y_2^{-\frac{i_2}{i_3}} \, \text{d}z \, \text{d}y_2 \, \text{d}y_1 \, .
 \end{align*}
Through integration by parts on the inner integral it follows that
\begin{align}
     \label{eq:threedim}
    & \int_{0}^1 
    \int_{0}^1 
    \int_{0}^1 
    \frac{1}{ x_1^{i_1} x_2^{i_2} x_3^{i_3}+q} \, \text{d}x_3 \, \text{d}x_2 \, \text{d}x_1 \nonumber \\
    &=\underbrace{ q^{-1 + \frac{1}{3}\left(\frac{1}{i_1}+\frac{1}{i_2}\right)} 
    \int_0^{ q^{-\frac{1}{3i_1}}} 
    \int_0^{ q^{-\frac{1}{3i_2}}} 
    \frac{1}{y_1^{i_1}y_2^{i_2} q^{-\frac{1}{3}}+1} \, \text{d}y_2 \, \text{d}y_1}_{\mathfrak{C}}\\
    &+ \underbrace{q^{-1 + \frac{1}{3} \left( \frac{1}{i_1}+\frac{1}{i_2}+\frac{1}{i_3} \right) }
    \mathlarger{\mathlarger{\int}}_0^{ q^{-\frac{1}{3i_1}}} 
    \mathlarger{\mathlarger{\int}}_0^{ q^{-\frac{1}{3i_2}}}
    \int_{\frac{1}{y_1^{i_1}y_2^{i_2} q^{- \frac{1}{3}}+1}}^{1}
    \left( \frac{1}{z} - 1\right)^{\frac{1}{i_3}} y_1^{-\frac{i_1}{i_3}}y_2^{-\frac{i_2}{i_3}} \, \text{d}z \, \text{d}y_2 \, \text{d}y_1}_{\mathfrak{D}} \,. \nonumber
\end{align}
As in Theorem \ref{thm:2-dim_analytic}, we study each of the summands independently. Further, we distinguish between different choices of values for $i_n$ for $n \in \{ 1,2,3 \}$. For each case, we find the orders of $\mathfrak{C}$, $\mathfrak{D}$ and the overall upper bound. The numbering of the cases is reported in Table \ref{tab:threedim}.

\begin{itemize}[label=$\lozenge$]
\item Cases 1 - 8 for $\mathbf{\mathfrak{C}}$.\\
We assume $i_1,i_2,i_3>0$. We change the variables in the integral with $y_n^{\prime}=y_n q^{-\frac{1}{6i_n}}$ for $n \in \{ 1 , 2 \}$ to have
\begin{align*} 
    & q^{-1 + \frac{1}{3} \left( \frac{1}{i_1}+\frac{1}{i_2} \right) } 
    \mathlarger{\mathlarger{\int}}_0^{ q^{-\frac{1}{3i_1}}} 
    \mathlarger{\mathlarger{\int}}_0^{ q^{-\frac{1}{3i_2}}} 
    \frac{1}{\left(y_1 q^{- \frac{1}{6 i_1}} \right)^{i_1} \left(y_2 q^{- \frac{1}{6 i_2}} \right)^{i_2}+1}\, \text{d}y_2 \, \text{d}y_1 \\
    &= q^{-1 + \frac{1}{2} \left(\frac{1}{i_1}+ \frac{1}{i_2} \right)}
    \int_0^{ q^{-\frac{1}{2i_1}}}
    \int_0^{ q^{-\frac{1}{2i_2}}}
    \frac{1}{y_1^{\prime i_1}y_2^{\prime i_2} + 1} \, .
 \end{align*}
We note that this expression is equivalent to the integral given by \eqref{eq:twodimstart}, as $i_3$ does not affect such rate of divergence. Hence, we find divergence of $\mathfrak{C}$ for different choices of $i_1, i_2$ with the rate represented in Table \ref{tab:twodim}.
\end{itemize}
\begin{itemize}[label=$\blacklozenge$]
\item Case 1 for $\mathbf{\mathfrak{D}}$.\\
Next, we continue analyzing the component $\mathfrak{D}$ of \eqref{eq:threedim}. First, we assume that $i_3 > i_2 > i_1 > 1$.\\
We change the order of integrals, through Fubini's Theorem, placing the integral on $z$ in the outer position and obtain

\begin{align}  \label{eq:threedim_case8}
    & q^{-1 + \frac{1}{3} \left( \frac{1}{i_1}+\frac{1}{i_2}+\frac{1}{i_3} \right) }
    \mathlarger{\mathlarger{\int}}_0^{ q^{-\frac{1}{3i_1}}} 
    \mathlarger{\mathlarger{\int}}_0^{ q^{-\frac{1}{3i_2}}}
    \int_{\frac{1}{y_1^{i_1}y_2^{i_2} q^{- \frac{1}{3}}+1}}^{1}
    \left( \frac{1}{z} - 1\right)^{\frac{1}{i_3}} y_1^{-\frac{i_1}{i_3}}y_2^{-\frac{i_2}{i_3}} \, \text{d}z \, \text{d}y_2 \, \text{d}y_1\\
    &= q^{-1 + \frac{1}{3} \left( \frac{1}{i_1}+\frac{1}{i_2}+\frac{1}{i_3} \right)} 
    \int_{\frac{1}{ q^{-1}+1}}^1 \int_{\left( \frac{1}{z} - 1\right)^{\frac{1}{i_1}} q^{\frac{2}{3i_1}}}^{ q^{- \frac{1}{3i_1}}}
    \int_{\left( \frac{1}{z} - 1\right)^{\frac{1}{i_2}} q^{\frac{1}{3i_2}}y_1^{-\frac{i_1}{i_2}}}^{ q^{-\frac{1}{3i_2}}} 
    \left( \frac{1}{z} - 1\right)^{\frac{1}{i_3}} y_1^{-\frac{i_1}{i_3}}y_2^{-\frac{i_2}{i_3}} \, \text{d}y_2  \, \text{d}y_1 \, \text{d}z \,. \nonumber
 \end{align}
Since $i_3>i_2$, it follows that $\mathfrak{D}$ is equal to
\begin{align}  \label{threedim_case1}
    &\Big(-\frac{i_2}{i_3}+1\Big)^{-1}  q^{-1 + \frac{1}{3} \left( \frac{1}{i_1}+\frac{1}{i_2}+\frac{1}{i_3} \right)} 
    \int_{\frac{1}{ q^{-1}+1}}^1 \int_{\left( \frac{1}{z} - 1\right)^{\frac{1}{i_1}} q^{\frac{2}{3i_1}}}^{ q^{- \frac{1}{3i_1}}} 
    \left( \frac{1}{z} - 1\right)^{\frac{1}{i_3}} y_1^{-\frac{i_1}{i_3}} 
    \left[y_2^{-\frac{i_2}{i_3}+1} \right]_{y_2=\left( \frac{1}{z} - 1\right)^{\frac{1}{i_2}} q^{\frac{1}{3i_2}}y_1^{-\frac{i_1}{i_2}}}^{ q^{-\frac{1}{3i_2}}}  
    \, \text{d}y_1 \, \text{d}z \nonumber \\
    &=\Big(-\frac{i_2}{i_3}+1\Big)^{-1} \underbrace{ q^{-1 + \frac{1}{3i_1} + \frac{2}{3i_3}} \int_{\frac{1}{ q^{-1}+1}}^1
    \left( \frac{1}{z} - 1\right)^{\frac{1}{i_3}} 
    \int_{\left( \frac{1}{z} - 1\right)^{\frac{1}{i_1}}  q^{\frac{2}{3i_1}}}^{ q^{- \frac{1}{3i_1}}}
    y_1^{- \frac{i_1}{i_3}} \, \text{d}y_1 \, \text{d}z}_{\mathfrak{D}.\text{\RomanNumeralCaps{1}}} \\
    &- \Big(-\frac{i_2}{i_3}+1\Big)^{-1}  \underbrace{ q^{-1 + \frac{1}{3i_1} + \frac{2}{3i_2}} \int_{\frac{1}{ q^{-1}+1}}^1
    \left( \frac{1}{z} - 1\right)^{\frac{1}{i_2}} 
    \int_{\left( \frac{1}{z} - 1\right)^{\frac{1}{i_1}} q^{\frac{2}{3i_1}}}^{q^{- \frac{1}{3i_1}}}
    y_1^{- \frac{i_1}{i_2}} \, \text{d}y_1 \, \text{d}z}_{\mathfrak{D}.\text{\RomanNumeralCaps{2}}} \, . \nonumber 
 \end{align}
From $i_3>i_1$ we know that $\mathfrak{D}.\text{\RomanNumeralCaps{1}}$ is equal to
\begin{align} \label{eq:D.1}
    & \Big(-\frac{i_1}{i_3}+1\Big)^{-1} 
    \Bigg( q^{-1+\frac{1}{i_3}} \int_{\frac{1}{ q^{-1}+1}}^1
    \left( \frac{1}{z} - 1\right)^{\frac{1}{i_3}}  \, \text{d}z 
    - q^{-1 + \frac{1}{i_1}} \int_{\frac{1}{ q^{-1}+1}}^1
    \left( \frac{1}{z} - 1\right)^{\frac{1}{i_1}} \, \text{d}z\Bigg)\\
    &= \Big(-\frac{i_1}{i_3}+1\Big)^{-1} 
    \Bigg( \underbrace{ q^{-1+\frac{1}{i_3}} \int_0^{ q^{-1}}
    \frac{z'^{\frac{1}{i_3}}}{(z'+1)^2}  \, \text{d}z'}_{\mathfrak{D}.\text{\RomanNumeralCaps{1}}.1}
    - \underbrace{ q^{-1 + \frac{1}{i_1}} \int_0^{ q^{-1}}
    \frac{z'^{\frac{1}{i_1}}}{(z'+1)^2} \, \text{d}z'}_{\mathfrak{D}.\text{\RomanNumeralCaps{1}}.2}
    \Bigg)
     \quad , \nonumber
 \end{align}
with $z^{\prime} = \frac{1}{z} -1$. Equivalently, since $i_2>i_1$, the summand $\mathfrak{D}.\text{\RomanNumeralCaps{2}}$ is equal to
\begin{align*}
    & \Big(-\frac{i_1}{i_2}+1\Big)^{-1} \Bigg( \underbrace{ q^{-1+\frac{1}{i_2}} \int_0^{ q^{-1}}
    \frac{z'^{\frac{1}{i_2}}}{(z'+1)^2}  \, \text{d}z'}_{\mathfrak{D}.\text{\RomanNumeralCaps{2}}.1} 
    - \underbrace{ q^{-1 + \frac{1}{i_1}} \int_0^{ q^{-1}}
    \frac{z'^{\frac{1}{i_1}}}{(z'+1)^2} \text{d}z'}_{\mathfrak{D}.\text{\RomanNumeralCaps{2}}.2} \Bigg)
     \, .
 \end{align*}
For $i_1,i_2,i_3>1$, the integrals in $\mathfrak{D}.\text{\RomanNumeralCaps{1}}.1$, $\mathfrak{D}.\text{\RomanNumeralCaps{1}}.2$, $\mathfrak{D}.\text{\RomanNumeralCaps{2}}.1$ and $\mathfrak{D}.\text{\RomanNumeralCaps{2}}.2$ are uniformly bounded for any $q>0$. Hence, the rate of divergence of $\mathfrak{D}$ is
\begin{align*} 
    \Theta\left( (-p)^{-1 + \frac{1}{i_3}} \right)
    - \Theta\left( (-p)^{-1 + \frac{1}{i_1}}\right) - \Theta\left( (-p)^{-1 + \frac{1}{i_2}}\right) + \Theta\left( (-p)^{-1 + \frac{1}{i_1}}\right)  = \Theta\left( (-p)^{-1 + \frac{1}{i_3}} \right) \, .
 \end{align*}
Combining $\mathfrak{C}$ and $\mathfrak{D}$ we get divergence for $i_3 > i_2 > i_1 > 1$ with the upper bound
\begin{align*} 
     \Theta\left( (-p)^{-1+ \frac{1}{i_2}} \right) + \Theta\left( (-p)^{-1 + \frac{1}{i_3}} \right) = \Theta\left( (-p)^{-1 + \frac{1}{i_3}} \right) \,.
 \end{align*}
\item Case 2 for $\mathbf{\mathfrak{D}}$.\\
Next, we consider the case $i_3 > i_2 = i_1 = i > 1$. We can follow the same argumentation of the previous case, until \eqref{eq:D.1}, and prove that 

\begin{align*}
\mathfrak{D}.\text{\RomanNumeralCaps{1}}
=\Big(-\frac{i}{i_3}+1\Big)^{-1} \Big( \mathfrak{D}.\text{\RomanNumeralCaps{1}}.1-\mathfrak{D}.\text{\RomanNumeralCaps{1}}.2 \Big) \;.
\end{align*}
In this case summand $\mathfrak{D}.\text{\RomanNumeralCaps{2}}$ is equal to
\begin{align} \label{case2_3dim}
    & q^{-1 + \frac{1}{i}} \int_{\frac{1}{ q^{-1}+1}}^1
    \left( \frac{1}{z} - 1\right)^{\frac{1}{i}} 
    \int_{\left( \frac{1}{z} - 1\right)^{\frac{1}{i}} q^{\frac{2}{3i}}}^{ q^{- \frac{1}{3i}}}
    y_1^{- 1} \, \text{d}y_1 \, \text{d}z \nonumber \\
    &= - \frac{1}{i} q^{-1 + \frac{1}{i}}
    \int_{\frac{1}{ q^{-1}+1}}^1
    \left( \frac{1}{z} - 1\right)^{\frac{1}{i}} 
    \Bigg( \log(q) 
    + \log\left( \frac{1}{z} - 1\right)
    \Bigg)
    \text{d}z \\
    &= -\frac{1}{i} \underbrace{q^{-1 + \frac{1}{i}}
    \log(q)
    \int_0^{ q^{-1}}
    \frac{z'^{\frac{1}{i}}}{(z'+1)^2} \text{d}z}_{\mathfrak{D}.\text{\RomanNumeralCaps{2}}.3}
    -\frac{1}{i} \underbrace{q^{-1 + \frac{1}{i}} \int_0^{ q^{-1}}
    \frac{z'^{\frac{1}{i}} \log(z')}{(z'+1)^2} \text{d}z}_{\mathfrak{D}.\text{\RomanNumeralCaps{2}}.4} \;. \nonumber
 \end{align}
The integrals in $\mathfrak{D}.\text{\RomanNumeralCaps{2}}.3$ and $\mathfrak{D}.\text{\RomanNumeralCaps{2}}.4$ are uniformly bounded for any $q>0$, therefore the rate of divergence of $\mathfrak{D}.\text{\RomanNumeralCaps{2}}$ is $\Theta\left( -(-p)^{1+\frac{1}{i}}\log(-p)\right)$. The rate assumed by  $\mathfrak{D}$ is
\begin{align*} 
     \Theta \left( (-p)^{-1+\frac{1}{i_3}} \right)
     -\Theta \left( - (-p)^{-1+ \frac{1}{i}} \log (-p)\right) = \Theta \left(  (-p)^{-1+ \frac{1}{i_3}}  \right) \;.
 \end{align*}
Overall, we find the rate of the upper bound in the case $i_3 > i_2 = i_1 > 1$ to be
\begin{align*} 
    \Theta \left( -(-p)^{-1+\frac{1}{i_2}} \log(-p)\right) + \Theta \left( (-p)^{-1+ \frac{1}{i_3}} \right) = \Theta \left( (-p)^{-1+ \frac{1}{i_3}}  \right) \;.
 \end{align*} 

\item Case 3 for $\mathbf{\mathfrak{D}}$.\\
Next we consider the case $i_3 = i_2 > i_1 > 1$. We continue with equation \eqref{eq:threedim_case8} and insert $i_3 = i_2 = i$ to obtain
\begin{align}  \label{eq:threedim_case3}
   & q^{-1 + \frac{1}{3} \left( \frac{1}{i_1}+\frac{2}{i} \right)} 
    \int_{\frac{1}{ q^{-1}+1}}^1 
    \int_{\left( \frac{1}{z} - 1\right)^{\frac{1}{i_1}} q^{\frac{2}{3i_1}}}^{ q^{- \frac{1}{3i_1}}}
    \int_{\left( \frac{1}{z} - 1\right)^{\frac{1}{i}} q^{\frac{1}{3i}}y_1^{-\frac{i_1}{i}}}^{ q^{-\frac{1}{3i}}}
    \left( \frac{1}{z} - 1\right)^{\frac{1}{i}} y_1^{-\frac{i_1}{i}}y_2^{-1} \, \text{d}y_2  \, \text{d}y_1 \, \text{d}z \nonumber \\
   &= q^{-1 + \frac{1}{3} \left( \frac{1}{i_1}+\frac{2}{i} \right)} \log\left( q^{-\frac{1}{3i}} \right) 
    \int_{\frac{1}{ q^{-1}+1}}^1 
    \int_{\left( \frac{1}{z} - 1\right)^{\frac{1}{i_1}} q^{\frac{2}{3i_1}}}^{ q^{- \frac{1}{3i_1}}} 
    \left( \frac{1}{z} - 1\right)^{\frac{1}{i}} y_1^{-\frac{i_1}{i}}  \, \text{d}y_1 \, \text{d}z \nonumber \\
   &- q^{-1 + \frac{1}{3} \left( \frac{1}{i_1}+\frac{2}{i} \right)} 
    \int_{\frac{1}{ q^{-1}+1}}^1 
    \int_{\left( \frac{1}{z} - 1\right)^{\frac{1}{i_1}} q^{\frac{2}{3i_1}}}^{ q^{- \frac{1}{3i_1}}} 
    \left( \frac{1}{z} - 1\right)^{\frac{1}{i}} y_1^{-\frac{i_1}{i}} \log\left( \left( \frac{1}{z} - 1\right)^{\frac{1}{i}} q^{\frac{1}{3i}}y_1^{-\frac{i_1}{i}} \right)  \, \text{d}y_1 \, \text{d}z \\
    &=- \frac{2}{3i} \underbrace{q^{-1 + \frac{1}{3} \left( \frac{1}{i_1}+\frac{2}{i} \right)} \log\left(q \right) 
    \int_{\frac{1}{ q^{-1}+1}}^1 
    \int_{\left( \frac{1}{z} - 1\right)^{\frac{1}{i_1}} q^{\frac{2}{3i_1}}}^{ q^{- \frac{1}{3i_1}}} 
    \left( \frac{1}{z} - 1\right)^{\frac{1}{i}} y_1^{-\frac{i_1}{i}}  \, \text{d}y_1 \, \text{d}z}_{\mathfrak{D}.\text{\RomanNumeralCaps{3}}} \nonumber \\
    & - \frac{1}{i} \underbrace{q^{-1 + \frac{1}{3} \left( \frac{1}{i_1}+\frac{2}{i} \right)} 
    \int_{\frac{1}{ q^{-1}+1}}^1 
    \int_{\left( \frac{1}{z} - 1\right)^{\frac{1}{i_1}} q^{\frac{2}{3i_1}}}^{ q^{- \frac{1}{3i_1}}} 
    \left( \frac{1}{z} - 1\right)^{\frac{1}{i}} \log\left( \frac{1}{z} - 1 \right) y_1^{-\frac{i_1}{i}} \, \text{d}y_1 \, \text{d}z}_{\mathfrak{D}.\text{\RomanNumeralCaps{4}}} \nonumber \\
    & + \frac{i_1}{i} \underbrace{q^{-1 + \frac{1}{3} \left( \frac{1}{i_1}+\frac{2}{i} \right)} 
    \int_{\frac{1}{ q^{-1}+1}}^1 
    \int_{\left( \frac{1}{z} - 1\right)^{\frac{1}{i_1}} q^{\frac{2}{3i_1}}}^{ q^{- \frac{1}{3i_1}}} 
    \left( \frac{1}{z} - 1\right)^{\frac{1}{i}} y_1^{-\frac{i_1}{i}} \log\left( y_1 \right)  \, \text{d}y_1 \, \text{d}z}_{\mathfrak{D}.\text{\RomanNumeralCaps{5}}} \; . \nonumber
 \end{align}
First we evaluate $\mathfrak{D}$.\RomanNumeralCaps{3} by integrating with respect to $y_1$. Since $i>i_1$ we get that $\mathfrak{D}$.\RomanNumeralCaps{3} is equal to
\begin{align} 
    &\Big(-\frac{i_1}{i}+1\Big)^{-1} \log(q)
    \Bigg(
    q^{-1 + \frac{1}{i}} \int_{\frac{1}{ q^{-1}+1}}^1 
    \left( \frac{1}{z} - 1\right)^{\frac{1}{i}} \, \text{d}z 
    - q^{-1 + \frac{1}{i_1}} \int_{\frac{1}{ q^{-1}+1}}^1 
    \left( \frac{1}{z} - 1\right)^{\frac{1}{i_1}} \, \text{d}z
    \Bigg)  \\
    &= \Big(-\frac{i_1}{i}+1\Big)^{-1}
    \Bigg(
    \underbrace{q^{-1 + \frac{1}{i}} \log(q) 
    \int_0^{ q^{-1}} 
     \frac{z'^{\frac{1}{i}}}{(z'+1)^2} \, \text{d}z'}_{\mathfrak{D}.\text{\RomanNumeralCaps{3}}.1} 
    - \underbrace{q^{-1 + \frac{1}{i_1}} \log(q)
    \int_0^{ q^{-1}}
    \frac{z'^{\frac{1}{i_1}}}{(z'+1)^2} \, \text{d}z'}_{\mathfrak{D}.\text{\RomanNumeralCaps{3}}.2}
    \Bigg) \;. \nonumber
 \end{align}
Since $i,i_1>1$, the integrals in $\mathfrak{D}.\text{\RomanNumeralCaps{3}}.1$ and in $\mathfrak{D}.\text{\RomanNumeralCaps{3}}.2$ are uniformly bounded for $q>0$. The rate of divergence of $\mathfrak{D}.\text{\RomanNumeralCaps{3}}$ is therefore $-\Theta\left(- q^{1-\frac{1}{i}}\log(q)\right)$ .\\
Through a similar approach we note that $\mathfrak{D}.\text{\RomanNumeralCaps{4}}$ is equal to
\begin{align*} 
    \Big(-\frac{i_1}{i}+1\Big)^{-1} 
    \Bigg(
    \underbrace{ q^{-1 + \frac{1}{i}} 
    \int_0^{ q^{-1}} 
     \frac{z'^{\frac{1}{i}} \log(z)}{(z'+1)^2} \, \text{d}z'}_{\mathfrak{D}.\text{\RomanNumeralCaps{4}}.1} 
    - \underbrace{ q^{-1 + \frac{1}{i_1}} 
    \int_0^{ q^{-1}}
    \frac{z'^{\frac{1}{i_1}} \log(z)}{(z'+1)^2} \, \text{d}z'}_{\mathfrak{D}.\text{\RomanNumeralCaps{4}}.2}
    \Bigg) \, .
 \end{align*}
The rate of divergence of $\mathfrak{D}.\text{\RomanNumeralCaps{4}}$ is $\Theta\left(q^{1-\frac{1}{i}}\right)$ because the integrals in $\mathfrak{D}.\text{\RomanNumeralCaps{4}}.1$ and $\mathfrak{D}.\text{\RomanNumeralCaps{4}}.2$ are uniformly bounded for $q>0$ since $i,i_1>1$.\\
Lastly, through integration by parts we obtain that $\mathfrak{D}.\text{\RomanNumeralCaps{5}}$ is equal to
\begin{align}  \label{case3_3dim}
    &\Big(-\frac{i_1}{i}+1\Big)^{-1} 
    q^{-1 + \frac{1}{i}} \log\left(q^{-\frac{1}{3 i_1}}\right) \int_{\frac{1}{ q^{-1}+1}}^1 
    \left( \frac{1}{z} - 1\right)^{\frac{1}{i}} \, \text{d}z \nonumber \\
    & -\Big(-\frac{i_1}{i}+1\Big)^{-2} 
    q^{-1 + \frac{1}{i}}  \int_{\frac{1}{ q^{-1}+1}}^1 
    \left( \frac{1}{z} - 1\right)^{\frac{1}{i}} \, \text{d}z \nonumber \\
    &- \Big(-\frac{i_1}{i}+1\Big)^{-1} 
    q^{-1 + \frac{1}{i_1}} \int_{\frac{1}{ q^{-1}+1}}^1 
    \left( \frac{1}{z} - 1\right)^{\frac{1}{i_1}} \log\left(\left( \frac{1}{z} - 1\right)^{\frac{1}{i_1}} q^{\frac{2}{3i_1}}\right) \, \text{d}z \nonumber \\
    &+ \Big(-\frac{i_1}{i}+1\Big)^{-2}
    q^{-1 + \frac{1}{i_1}} 
    \int_{\frac{1}{ q^{-1}+1}}^1 
    \left( \frac{1}{z} - 1\right)^{\frac{1}{i_1}} \, \text{d}z\\
    &=-\Big(-\frac{i_1}{i}+1\Big)^{-2} 
    \underbrace{q^{-1 + \frac{1}{i}}  \int_0^{ q^{-1}} 
    \frac{z'^{\frac{1}{i}}}{(z'+1)^2} \, \text{d}z'}_{\mathfrak{D}.\text{\RomanNumeralCaps{5}}.1} 
    + \Big(-\frac{i_1}{i}+1\Big)^{-2} 
    \underbrace{q^{-1 + \frac{1}{i_1}} \int_0^{ q^{-1}}
    \frac{z'^{\frac{1}{i_1}}}{(z'+1)^2} \, \text{d}z'}_{\mathfrak{D}.\text{\RomanNumeralCaps{5}}.2} \nonumber \\
    &-\Big(-\frac{i_1}{i}+1\Big)^{-1} 
     \frac{1}{3i_1} \underbrace{q^{-1 + \frac{1}{i}} \log\left(q\right) 
     \int_0^{ q^{-1}} 
    \frac{z'^{\frac{1}{i}}}{(z'+1)^2} \, \text{d}z'}_{\mathfrak{D}.\text{\RomanNumeralCaps{5}}.3}
    - \Big(-\frac{i_1}{i}+1\Big)^{-1} 
    \frac{2}{3i_1} \underbrace{q^{-1 + \frac{1}{i_1}}  \log\left(q\right) 
     \int_0^{ q^{-1}} 
    \frac{z'^{\frac{1}{i_1}}}{(z'+1)^2} \, \text{d}z'}_{\mathfrak{D}.\text{\RomanNumeralCaps{5}}.4} \nonumber
    \\
    &- \Big(-\frac{i_1}{i}+1\Big)^{-1} 
    \frac{1}{i_1} \underbrace{q^{-1 + \frac{1}{i_1}} 
    \int_0^{ q^{-1}}
    \frac{z'^{\frac{1}{i_1}} \log(z')}{(z'+1)^2} \, \text{d}z'}_{\mathfrak{D}.\text{\RomanNumeralCaps{5}}.5} \;. \nonumber
 \end{align}
The rate of divergence of $\mathfrak{D}.\text{\RomanNumeralCaps{5}}$ is $\Theta\left(-q^{-1+\frac{1}{i}}\log(q)\right)$, since the integrals in \eqref{case3_3dim} are uniformly bounded for $q>0$.\\
Overall, the rate assumed by $\mathfrak{D}$ is
\begin{align*}
    \Theta\left(-q^{1-\frac{1}{i}} \log(q) \right)
    - \Theta\left(q^{1-\frac{1}{i}}\right)
    + \Theta\left(-q^{-1+\frac{1}{i}}\log(q)\right)
    = \Theta\left(-q^{-1+\frac{1}{i}}\log(q)\right)\;.
\end{align*}
We find the rate of divergence for the upper bound in the case $i_3 =i_2 > i_1 > 1$ given by
\begin{align*} 
    &\Theta \left( (-p)^{-1+ \frac{1}{i_2}} \right) 
    +  \Theta \left( -(-p)^{-1 + \frac{1}{i_3}} \log(-p) \right)
    = \Theta\left(-(-p)^{-1 + \frac{1}{i_3}} \log(-p) \right) \;.
 \end{align*}
\item Case 4 for $\mathbf{\mathfrak{D}}$.\\
We suppose now that $i_3 = i_2 = i_1 > 1$. We trace the previous case until equation \eqref{eq:threedim_case3} and insert $i = i_3 = i_2 = i_1$. Therefore $\mathfrak{D}.\text{\RomanNumeralCaps{3}}$ is equal to
\begin{align}  \label{eq:threedim_case4_1}
    &q^{-1 + \frac{1}{i}} \log\left(q \right) 
    \int_{\frac{1}{ q^{-1}+1}}^1 
    \int_{\left( \frac{1}{z} - 1\right)^{\frac{1}{i}} q^{\frac{2}{3i}}}^{ q^{- \frac{1}{3i}}} 
    \left( \frac{1}{z} - 1\right)^{\frac{1}{i}} y_1^{-1}  \, \text{d}y_1 \, \text{d}z \nonumber \\
    &= -\frac{1}{i} q^{-1 + \frac{1}{i}} \log\left(q \right) 
    \left( \log(q) 
    \int_{\frac{1}{ q^{-1}+1}}^1 
    \left( \frac{1}{z} - 1\right)^{\frac{1}{i}} \, \text{d}z 
    + \int_{\frac{1}{ q^{-1}+1}}^1 
    \left( \frac{1}{z} - 1\right)^{\frac{1}{i}} \log\left(\frac{1}{z} - 1\right) \, \text{d}z 
    \right) \\
    &= -\frac{1}{i} \underbrace{q^{-1 + \frac{1}{i}} \log^2\left(q \right) 
    \int_0^{q^{-1}}
    \frac{z'^{\frac{1}{i}}}{(z'+1)^2} \, \text{d}z'}_{\mathfrak{D}.\text{\RomanNumeralCaps{3}}.3} 
    -\frac{1}{i} \underbrace{q^{-1 + \frac{1}{i}} \log\left(q \right) \int_0^{q^{-1}} 
    \frac{z'^{\frac{1}{i}}\log(z')}{(z'+1)^2} \, \text{d}z}_{\mathfrak{D}.\text{\RomanNumeralCaps{3}}.4} \;. \nonumber
 \end{align}
Since $i>1$, the integrals in the right-hand side of \eqref{eq:threedim_case4_1} are uniformly bounded for any $q>0$. Therefore the rate of divergence of $\mathfrak{D}.\text{\RomanNumeralCaps{3}}$ is $-\Theta\left(q^{-1+\frac{1}{i}}\log^2(q)\right)$.\\
Similarly, the summand $\mathfrak{D}.\text{\RomanNumeralCaps{4}}$ assumes value
\begin{align}  \label{eq:threedim_case4_2}
    &q^{-1 + \frac{1}{i}} 
    \int_{\frac{1}{ q^{-1}+1}}^1 
    \int_{\left( \frac{1}{z} - 1\right)^{\frac{1}{i}} q^{\frac{2}{3i}}}^{ q^{- \frac{1}{3i}}} 
    \left( \frac{1}{z} - 1\right)^{\frac{1}{i}} \log\left( \frac{1}{z} - 1 \right) y_1^{-1} \, \text{d}y_1 \, \text{d}z \nonumber \\
    &= -\frac{1}{i} q^{-1 + \frac{1}{i}}
    \left( \log(q) 
    \int_{\frac{1}{ q^{-1}+1}}^1 
    \left( \frac{1}{z} - 1\right)^{\frac{1}{i}} \log\left( \frac{1}{z} - 1 \right) \, \text{d}z 
    + \int_{\frac{1}{ q^{-1}+1}}^1 
    \left( \frac{1}{z} - 1\right)^{\frac{1}{i}} \log^2\left(\frac{1}{z} - 1\right) \, \text{d}z 
    \right) \\
    &= -\frac{1}{i} \underbrace{q^{-1 + \frac{1}{i}} \log(q) 
    \int_0^{q^{-1}}
    \frac{z'^{\frac{1}{i}}\log(z')}{(z'+1)^2} \, \text{d}z'}_{\mathfrak{D}.\text{\RomanNumeralCaps{4}}.3} 
    -\frac{1}{i} \underbrace{q^{-1 + \frac{1}{i}} \int_0^{q^{-1}} 
    \frac{z'^{\frac{1}{i}}\log^2(z')}{(z'+1)^2} \, \text{d}z'}_{\mathfrak{D}.\text{\RomanNumeralCaps{4}}.4} 
    \;. \nonumber
 \end{align}
Again, since $i>1$, the integrals in the right-hand side of \eqref{eq:threedim_case4_2} are uniformly bounded for any $q>0$ and the rate of divergence of $\mathfrak{D}.\text{\RomanNumeralCaps{4}}$ is in this case $\Theta\left(-q^{-1+\frac{1}{i}}\log(q)\right)$.\\
Lastly, the summand $\mathfrak{D}.\text{\RomanNumeralCaps{5}}$ is equal to
\begin{align}  \label{eq:threedim_case4_3}
    &q^{-1 + \frac{1}{i}} 
    \int_{\frac{1}{ q^{-1}+1}}^1 
    \int_{\left( \frac{1}{z} - 1\right)^{\frac{1}{i}} q^{\frac{2}{3i}}}^{ q^{- \frac{1}{3i}}} 
    \left( \frac{1}{z} - 1\right)^{\frac{1}{i}} y_1^{-1} \log\left( y_1 \right)  \, \text{d}y_1 \, \text{d}z \nonumber \\
    & =\frac{1}{2} q^{-1 + \frac{1}{i}} \log^2(q^{-\frac{1}{3i}})
    \int_{\frac{1}{ q^{-1}+1}}^1  
    \left( \frac{1}{z} - 1\right)^{\frac{1}{i}} \, \text{d}z 
    - \frac{1}{2} q^{-1 + \frac{1}{i}}
    \int_{\frac{1}{ q^{-1}+1}}^1  
    \left( \frac{1}{z} - 1\right)^{\frac{1}{i}} 
    \log^2\left( \left( \frac{1}{z} - 1\right)^{\frac{1}{i}} q^{\frac{2}{3i}} \right) \, \text{d}z \nonumber
    \\
    &=-\frac{1}{6 i^2} q^{-1 + \frac{1}{i}} \log^2(q)
    \int_{\frac{1}{ q^{-1}+1}}^1  
    \left( \frac{1}{z} - 1\right)^{\frac{1}{i}} \, \text{d}z
    -\frac{2}{3 i^2} q^{-1 + \frac{1}{i}} \log(q)
    \int_{\frac{1}{ q^{-1}+1}}^1  
    \left( \frac{1}{z} - 1\right)^{\frac{1}{i}} \log\left( \frac{1}{z} - 1\right) \, \text{d}z \nonumber \\
    & -\frac{1}{2 i^2} q^{-1 + \frac{1}{i}}
    \int_{\frac{1}{ q^{-1}+1}}^1  
    \left( \frac{1}{z} - 1\right)^{\frac{1}{i}} \log^2\left( \frac{1}{z} - 1\right) \, \text{d}z \\
    &=-\frac{1}{6 i^2} \underbrace{q^{-1 + \frac{1}{i}} \log^2(q)
    \int_0^{q^{-1}}
    \frac{z'^{\frac{1}{i}}}{(z'+1)^2} \, \text{d}z'}_{\mathfrak{D}.\text{\RomanNumeralCaps{5}}.6}
    -\frac{2}{3 i^2} \underbrace{q^{-1 + \frac{1}{i}} \log(q)
    \int_0^{q^{-1}} 
    \frac{z'^{\frac{1}{i}}\log(z')}{(z'+1)^2} \, \text{d}z'}_{\mathfrak{D}.\text{\RomanNumeralCaps{5}}.7} \nonumber \\
    &-\frac{1}{2 i^2} \underbrace{q^{-1 + \frac{1}{i}}
    \int_0^{q^{-1}}  
    \frac{z'^{\frac{1}{i}}\log^2(z')}{(z'+1)^2} \, \text{d}z'}_{\mathfrak{D}.\text{\RomanNumeralCaps{5}}.8} \;. \nonumber
 \end{align}
Since $i>1$, the integrals in the right-hand side of \eqref{eq:threedim_case4_3} are uniformly bounded for any $q>0$ and the rate of divergence of $\mathfrak{D}.\text{\RomanNumeralCaps{5}}$ is in this case $-\Theta\left(q^{-1+\frac{1}{i}}\log^2(q)\right)$.\\
Observing \eqref{eq:threedim_case3}, \eqref{eq:threedim_case4_1} and \eqref{eq:threedim_case4_3}, we can note that $\mathfrak{D}$ has as leading terms $\frac{2}{3 i^2} \mathfrak{D}.\text{\RomanNumeralCaps{3}}.3$ and $-\frac{1}{6 i^2}\mathfrak{D}.\text{\RomanNumeralCaps{5}}.6$, hence it assumes the same rate of divergence as
\begin{align*}
    \frac{2}{3 i^2} \mathfrak{D}.\text{\RomanNumeralCaps{3}}.3
    -\frac{1}{6 i^2}\mathfrak{D}.\text{\RomanNumeralCaps{5}}.6
    =\frac{1}{2 i^2} q^{-1 + \frac{1}{i}} \log^2\left(q \right) 
    \int_0^{q^{-1}}
    \frac{z'^{\frac{1}{i}}}{(z'+1)^2} \, \text{d}z'=\Theta\left(q^{-1 + \frac{1}{i}} \log^2\left(q \right)  \right) \;.
\end{align*}
Combining both summands $\mathfrak{C}$ and $\mathfrak{D}$, we obtain the rate of the upper bound as 
\begin{align*} 
   \Theta \left( -(-p)^{-1+ \frac{1}{i_2}} \log (-p) \right) + \Theta \left( (-p)^{-1 + \frac{1}{i}} \log^2 (-p) \right) = \Theta \left( (-p)^{-1+ \frac{1}{i}} \log^2 (-p) \right) 
 \end{align*}
for $i_3 = i_2 = i_1 > 1$.
\item Case 5 for $\mathbf{\mathfrak{D}}$.\\
The next case we consider is $i_3 > i_2 > i_1 = 1$. We can follow the same steps as in the first case for $\mathfrak{D}$ but we note that the integrals in $\mathfrak{D}.\text{\RomanNumeralCaps{1}}.2$ and $\mathfrak{D}.\text{\RomanNumeralCaps{2}}.2$ diverge as $\Theta\left(\log\left(q^{-1}\right)\right)$, due to \eqref{integral_tool}. The rate of divergence of $\mathfrak{D}$ is therefore
\begin{align*} 
    \Theta\left( q^{-1 + \frac{1}{i_3}} \right)
    - \Theta\left( -\log\left(q\right)\right) - \Theta\left( q^{-1 + \frac{1}{i_2}}\right) + \Theta\left( -\log\left(q\right)\right)  = \Theta\left( q^{-1 + \frac{1}{i_3}} \right)
 \end{align*}
and the rate of the upper bound in the case $i_3 > i_2 > i_1 = 1$ is
\begin{align*} 
     \Theta\left( (-p)^{-1+ \frac{1}{i_2}} \right) + \Theta\left( (-p)^{-1 + \frac{1}{i_3}} \right) = \Theta\left( (-p)^{-1 + \frac{1}{i_3}} \right) \,.
 \end{align*}

\item Case 6 for $\mathbf{\mathfrak{D}}$.\\
We assume that $i_3 > i_2 = i_1 = 1$ and proceed as in the previous case as we prove that the rate of divergence of $\mathfrak{D}.\text{\RomanNumeralCaps{1}}$ is 
\begin{align*}
    \Theta\left( q^{-1 + \frac{1}{i_3}} \right)
    - \Theta\left( -\log(q)\right)= \Theta\left( q^{-1 + \frac{1}{i_3}} \right) \; . 
\end{align*}
We can then follow same approach as in the second case  for $\mathfrak{D}$ and study the summands in \eqref{case2_3dim}. Inserting $i_2 = i_1 = i = 1$ we know from \eqref{integral_tool} that the rate of divergence assumed by $\mathfrak{D}.\text{\RomanNumeralCaps{2}}.3$ is $-\Theta\left(\log^2(q) \right)$ and from \eqref{eq:integral3}, in Appendix \ref{AppendC}, that the rate of $\mathfrak{D}.\text{\RomanNumeralCaps{2}}.4$ is $\Theta\left( \log^2(q) \right)$. Hence, we can state that $\mathfrak{D}=\Theta\left( q^{-1 + \frac{1}{i_3}} \right)$, and in the case $i_3 > i_2 = i_1 = 1$ that the rate of divergence of the upper bound is
\begin{align*} 
     \Theta \left( \log^2 (-p)\right) + \Theta \left( (-p)^{-1+ \frac{1}{i_3}} \right) = \Theta \left(  (-p)^{-1+ \frac{1}{i_3}} \right) \, .
 \end{align*}

\item Case 7 for $\mathbf{\mathfrak{D}}$.\\
We now consider $i_3 = i_2 > i_1 = 1$. We follow the same approach taken in the third case for $\mathfrak{D}$ and set $i_3=i_2=i$. We obtain that, due to \eqref{integral_tool} and \eqref{eq:integral3},
\begin{alignat*}{4}
    &\mathfrak{D}.\text{\RomanNumeralCaps{3}}.1=-\Theta\left(
    -q^{1-\frac{1}{i}} \log\left(q\right)\right) \quad 
    &&, \quad
    \mathfrak{D}.\text{\RomanNumeralCaps{3}}.2=-\Theta\left(\log^2(q) \right) \quad 
    &&, \quad
    \mathfrak{D}.\text{\RomanNumeralCaps{4}}.1=\Theta\left( q^{1-\frac{1}{i}} \right) \quad 
    &&, \\
    &\mathfrak{D}.\text{\RomanNumeralCaps{4}}.2=\Theta\left(
    \log^2(q) \right) \quad 
    &&, \quad
    \mathfrak{D}.\text{\RomanNumeralCaps{5}}.1=\Theta\left(
    q^{1-\frac{1}{i}} \right) \quad
    &&, \quad
    \mathfrak{D}.\text{\RomanNumeralCaps{5}}.2=\Theta\left( -\log\left(q\right) \right) \quad 
    &&, \\
    &\mathfrak{D}.\text{\RomanNumeralCaps{5}}.3=-\Theta\left(
    -q^{1-\frac{1}{i}} \log\left(q\right) \right) \quad 
    &&, \quad
    \mathfrak{D}.\text{\RomanNumeralCaps{5}}.4=-\Theta\left(
    \log^2\left(q\right) \right) \quad 
    &&, \quad
    \mathfrak{D}.\text{\RomanNumeralCaps{5}}.5=\Theta\left(
    \log^2\left(q\right) \right) \quad 
    &&.
\end{alignat*}
The rate of divergence of $\mathfrak{D}$ is hence
\begin{align*} 
    &\Theta\left(- q^{1-\frac{1}{i}} \log(q)\right)
    -\Theta\left(\log^2(q) \right)
    -\Theta\left( q^{1-\frac{1}{i}} \right)
    +\Theta\left(\log^2(q) \right)
    -\Theta\left(q^{1-\frac{1}{i}} \right)\\
    &+\Theta\left( -\log\left(q\right) \right)
    +\Theta\left(-q^{1-\frac{1}{i}} \log\left(q\right)\right)
    +\Theta\left(\log^2\left(q\right) \right)
    -\Theta\left(\log^2\left(q\right) \right)
    =\Theta\left(-q^{1-\frac{1}{i}} \log\left(q\right)\right)\;.
 \end{align*}
Overall, combining $\mathfrak{C}$ and $\mathfrak{D}$ we get the rate of divergence of upper bound as
\begin{align*} 
    &\Theta \left( (-p)^{-1 + \frac{1}{i_2}} \right) + \Theta \left(- (-p)^{-1+ \frac{1}{i_3}} \log(-p) \right) 
    = \Theta \left( -(-p)^{-1+ \frac{1}{i_3}} \log(-p) \right) 
 \end{align*}
for $i_3 = i_2 > i_1 = 1$.

\item Case 8 for $\mathbf{\mathfrak{D}}$.\\
The last case for which we evaluate $\mathfrak{D}$ is $i_3 = i_2 = i_1 = 1$. Following the same steps of the fourth case for $\mathfrak{D}$ we obtain \eqref{eq:threedim_case4_1}, \eqref{eq:threedim_case4_2} and \eqref{eq:threedim_case4_3}. We know from \eqref{integral_tool} and \eqref{nice2}, in Appendix \ref{AppendC}, that $\mathfrak{D}.\text{\RomanNumeralCaps{3}}.3$, 
$\mathfrak{D}.\text{\RomanNumeralCaps{4}}.4$, 
$\mathfrak{D}.\text{\RomanNumeralCaps{5}}.6$, and
$\mathfrak{D}.\text{\RomanNumeralCaps{5}}.8$ diverge with rate $\Theta\left(- \log^3(q)\right)$ and that $\mathfrak{D}.\text{\RomanNumeralCaps{3}}.4$, $\mathfrak{D}.\text{\RomanNumeralCaps{4}}.3$, $\mathfrak{D}.\text{\RomanNumeralCaps{5}}.7$ as $-\Theta\left(- \log^3(q)\right)$. \\
From \eqref{integral_tool}, \eqref{nice} and \eqref{nice2}, we get the rate of divergence of the summand $\mathfrak{D}$ as
\begin{align*} 
    &\frac{2}{3} \mathfrak{D}.\text{\RomanNumeralCaps{3}}.3 + 
    \frac{2}{3} \mathfrak{D}.\text{\RomanNumeralCaps{3}}.4 +
    \mathfrak{D}.\text{\RomanNumeralCaps{4}}.3 +
    \mathfrak{D}.\text{\RomanNumeralCaps{4}}.4 -
    \frac{1}{6} \mathfrak{D}.\text{\RomanNumeralCaps{5}}.6 -
    \frac{2}{3} \mathfrak{D}.\text{\RomanNumeralCaps{5}}.7 -
    \frac{1}{2} \mathfrak{D}.\text{\RomanNumeralCaps{5}}.8\\
    &=\left(\frac{2}{3}-\frac{2}{3} \frac{1}{2} -\frac{1}{2}+\frac{1}{3} -\frac{1}{6}+\frac{2}{3} \frac{1}{2} -\frac{1}{2} \frac{1}{3} \right) \Theta\left(- \log^3(q)\right)
    =\Theta\left(-\log^3(q)\right)
 \end{align*}
Conclusively, in the case $i_3 = i_2 = i_1 = 1$ we find that $\langle V_{\infty}g, g\rangle$ has an upper bound with rate of divergence
\begin{align*} 
     \Theta \left( \log^2(-p)\right) + \Theta \left( -\log^3 (-p) \right)  = \Theta \left( -\log^3 (-p) \right)  \, .
 \end{align*}
\end{itemize}

\end{proof}

\section{Appendix} \label{AppendC}

For $q>0$ and $m\in\mathbb{N}\cup\{0\}$
\begin{align}  \label{eq:integral3}
    \int_0^{ q^{-1}}\frac{z^{\prime }\log^m \left( z^{\prime} \right)}{(z^{\prime}+1)^2} \, \text{d}z^{\prime} = \Theta\left( \left(-\log(q)\right)^{m+1}\right)  
 \end{align}
holds true by considering
\begin{align*} 
    \lim_{q \to 0^{+}}\int_{0}^{ q^{-1}} \frac{z^{\prime}\log^m \left(z^{\prime}\right)}{(z^{\prime}+1)^2} \, \text{d}z^{\prime} &= \int_{0}^{n} \frac{z^{\prime}\log^m \left(z^{\prime}\right)}{(z^{\prime}+1)^2} \, \text{d}z^{\prime} + \lim_{q \to 0^{+}}\int_{n}^{q^{-1}} \frac{z^{\prime}\log^m \left(z^{\prime}\right)}{(z^{\prime}+1)^2} \, \text{d}z^{\prime} \;,
 \end{align*}
with $n>0$. Whereas the first integral is finite for any finite $n>0$, for the second we find that, in the limit $z^{\prime}\to +\infty$, $\frac{z^{\prime}\log^m \left(z^{\prime} \right)}{(z^{\prime}+1)^2}$ behaves equivalently to $\frac{\log^m\left(z^{\prime}\right)}{z^{\prime}}$ since 
\begin{align}  \label{nice}
    \lim_{z^{\prime} \to \infty} \frac{(z^{\prime}+1)^{-2}z^{\prime}\log^m \left(z^{\prime}\right)}{ z^{\prime -1} \log^m \left( z^{\prime} \right)} 
     = 1 \quad .
 \end{align}
We study then the integral
\begin{align*} 
    &\int_{n}^{ q^{-1}} \frac{\log^m \left( z^{\prime} \right)}{z^{\prime}} \, \text{d}z^{\prime} 
    =  \left[\frac{1}{m+1}\log^{m+1}(z^{\prime})\right]_n^{ q^{-1}}
    =  \frac{1}{m+1}\log^{m+1}( q^{-1})-\frac{1}{m+1}\log^{m+1}(n)
\end{align*}
which concludes the proof.\\
Lastly we note that
\begin{align} \label{nice2}
   \lim_{q \to 0^+} \frac{1}{\frac{1}{m+1}\log^{m+1}( q^{-1})}
   \int_0^{ q^{-1}}\frac{z^{\prime }\log^m \left( z^{\prime} \right)}{(z^{\prime}+1)^2} \, \text{d}z^{\prime}
   =1 \quad .
\end{align}

\newpage
\printbibliography

\end{document}